\DeclareSymbolFont{cyrletters}{OT2}{wncyr}{m}{n}
\DeclareMathSymbol{\Sha}{\mathalpha}{cyrletters}{"58}
\newcommand{\floor}[1]{{\left\lfloor#1\right\rfloor}}
\newcommand{\round}[1]{{\left\lceil#1\right\rfloor}}
\begin{document}

    \theoremstyle{plain}
    \newtheorem{thm}{Theorem} \newtheorem{cor}[thm]{Corollary}
    \newtheorem{lemma}[thm]{Lemma}  \newtheorem{prop}[thm]{Proposition}
    \newtheorem{conj}[thm]{Conjecture}  \newtheorem{fact}[thm]{Fact}
    \newtheorem{claim}[thm]{Claim}
    \theoremstyle{definition}
    \newtheorem{defn}[thm]{Definition}
    \newtheorem{example}[thm]{Example}
    \newtheorem{exercise}[thm]{Exercise}
    \theoremstyle{remark}
    \newtheorem*{remark}{Remark}

    \newcommand{\BA}{{\mathbb {A}}} \newcommand{\BB}{{\mathbb {B}}}
    \newcommand{\BC}{{\mathbb {C}}} \newcommand{\BD}{{\mathbb {D}}}
    \newcommand{\BE}{{\mathbb {E}}} \newcommand{\BF}{{\mathbb {F}}}
    \newcommand{\BG}{{\mathbb {G}}} \newcommand{\BH}{{\mathbb {H}}}
    \newcommand{\BI}{{\mathbb {I}}} \newcommand{\BJ}{{\mathbb {J}}}
    \newcommand{\BK}{{\mathbb {K}}} \newcommand{\BL}{{\mathbb {L}}}
    \newcommand{\BM}{{\mathbb {M}}} \newcommand{\BN}{{\mathbb {N}}}
    \newcommand{\BO}{{\mathbb {O}}} \newcommand{\BP}{{\mathbb {P}}}
    \newcommand{\BQ}{{\mathbb {Q}}} \newcommand{\BR}{{\mathbb {R}}}
    \newcommand{\BS}{{\mathbb {S}}} \newcommand{\BT}{{\mathbb {T}}}
    \newcommand{\BU}{{\mathbb {U}}} \newcommand{\BV}{{\mathbb {V}}}
    \newcommand{\BW}{{\mathbb {W}}} \newcommand{\BX}{{\mathbb {X}}}
    \newcommand{\BY}{{\mathbb {Y}}} \newcommand{\BZ}{{\mathbb {Z}}}

    \newcommand{\CA}{{\mathcal {A}}} \newcommand{\CB}{{\mathcal {B}}}
    \newcommand{\CC}{{\mathcal {C}}} \renewcommand{\CD}{{\mathcal {D}}}
    \newcommand{\CE}{{\mathcal {E}}} \newcommand{\CF}{{\mathcal {F}}}
    \newcommand{\CG}{{\mathcal {G}}} \newcommand{\CH}{{\mathcal {H}}}
    \newcommand{\CI}{{\mathcal {I}}} \newcommand{\CJ}{{\mathcal {J}}}
    \newcommand{\CK}{{\mathcal {K}}} \newcommand{\CL}{{\mathcal {L}}}
    \newcommand{\CM}{{\mathcal {M}}} \newcommand{\CN}{{\mathcal {N}}}
    \newcommand{\CO}{{\mathcal {O}}} \newcommand{\CP}{{\mathcal {P}}}
    \newcommand{\CQ}{{\mathcal {Q}}} \newcommand{\CR}{{\mathcal {R}}}
    \newcommand{\CS}{{\mathcal {S}}} \newcommand{\CT}{{\mathcal {T}}}
    \newcommand{\CU}{{\mathcal {U}}} \newcommand{\CV}{{\mathcal {V}}}
    \newcommand{\CW}{{\mathcal {W}}} \newcommand{\CX}{{\mathcal {X}}}
    \newcommand{\CY}{{\mathcal {Y}}} \newcommand{\CZ}{{\mathcal {Z}}}

    \newcommand{\RA}{{\mathrm {A}}} \newcommand{\RB}{{\mathrm {B}}}
    \newcommand{\RC}{{\mathrm {C}}} \newcommand{\RD}{{\mathrm {D}}}
    \newcommand{\RE}{{\mathrm {E}}} \newcommand{\RF}{{\mathrm {F}}}
    \newcommand{\RG}{{\mathrm {G}}} \newcommand{\RH}{{\mathrm {H}}}
    \newcommand{\RI}{{\mathrm {I}}} \newcommand{\RJ}{{\mathrm {J}}}
    \newcommand{\RK}{{\mathrm {K}}} \newcommand{\RL}{{\mathrm {L}}}
    \newcommand{\RM}{{\mathrm {M}}} \newcommand{\RN}{{\mathrm {N}}}
    \newcommand{\RO}{{\mathrm {O}}} \newcommand{\RP}{{\mathrm {P}}}
    \newcommand{\RQ}{{\mathrm {Q}}} \newcommand{\RR}{{\mathrm {R}}}
    \newcommand{\RS}{{\mathrm {S}}} \newcommand{\RT}{{\mathrm {T}}}
    \newcommand{\RU}{{\mathrm {U}}} \newcommand{\RV}{{\mathrm {V}}}
    \newcommand{\RW}{{\mathrm {W}}} \newcommand{\RX}{{\mathrm {X}}}
    \newcommand{\RY}{{\mathrm {Y}}} \newcommand{\RZ}{{\mathrm {Z}}}

    \newcommand{\fa}{{\mathfrak{a}}} \newcommand{\fb}{{\mathfrak{b}}}
    \newcommand{\fc}{{\mathfrak{c}}} \newcommand{\fd}{{\mathfrak{d}}}
    \newcommand{\fe}{{\mathfrak{e}}} \newcommand{\ff}{{\mathfrak{f}}}
    \newcommand{\fg}{{\mathfrak{g}}} \newcommand{\fh}{{\mathfrak{h}}}
    \newcommand{\fii}{{\mathfrak{i}}} \newcommand{\fj}{{\mathfrak{j}}}
    \newcommand{\fk}{{\mathfrak{k}}} \newcommand{\fl}{{\mathfrak{l}}}
    \newcommand{\fm}{{\mathfrak{m}}} \newcommand{\fn}{{\mathfrak{n}}}
    \newcommand{\fo}{{\mathfrak{o}}} \newcommand{\fp}{{\mathfrak{p}}}
    \newcommand{\fq}{{\mathfrak{q}}} \newcommand{\fr}{{\mathfrak{r}}}
    \newcommand{\fs}{{\mathfrak{s}}} \newcommand{\ft}{{\mathfrak{t}}}
    \newcommand{\fu}{{\mathfrak{u}}} \newcommand{\fv}{{\mathfrak{v}}}
    \newcommand{\fw}{{\mathfrak{w}}} \newcommand{\fx}{{\mathfrak{x}}}
    \newcommand{\fy}{{\mathfrak{y}}} \newcommand{\fz}{{\mathfrak{z}}}
     \newcommand{\fA}{{\mathfrak{A}}} \newcommand{\fB}{{\mathfrak{B}}}
    \newcommand{\fC}{{\mathfrak{C}}} \newcommand{\fD}{{\mathfrak{D}}}
    \newcommand{\fE}{{\mathfrak{E}}} \newcommand{\fF}{{\mathfrak{F}}}
    \newcommand{\fG}{{\mathfrak{G}}} \newcommand{\fH}{{\mathfrak{H}}}
    \newcommand{\fI}{{\mathfrak{I}}} \newcommand{\fJ}{{\mathfrak{J}}}
    \newcommand{\fK}{{\mathfrak{K}}} \newcommand{\fL}{{\mathfrak{L}}}
    \newcommand{\fM}{{\mathfrak{M}}} \newcommand{\fN}{{\mathfrak{N}}}
    \newcommand{\fO}{{\mathfrak{O}}} \newcommand{\fP}{{\mathfrak{P}}}
    \newcommand{\fQ}{{\mathfrak{Q}}} \newcommand{\fR}{{\mathfrak{R}}}
    \newcommand{\fS}{{\mathfrak{S}}} \newcommand{\fT}{{\mathfrak{T}}}
    \newcommand{\fU}{{\mathfrak{U}}} \newcommand{\fV}{{\mathfrak{V}}}
    \newcommand{\fW}{{\mathfrak{W}}} \newcommand{\fX}{{\mathfrak{X}}}
    \newcommand{\fY}{{\mathfrak{Y}}} \newcommand{\fZ}{{\mathfrak{Z}}}

\title{On a Diophantine inequality involving a prime and an almost-prime}%
\author{Liyang Yang}%

\address{Department of Mathematical Sciences \\
 Tsinghua University \\
 Beijing \\
 100084\\
 P. R. China}

\email{yly12@mails.tsinghua.edu.cn}
\date{\today}%

\begin{abstract}
We prove that there are infinitely many solutions of
$$
|\lambda_0+\lambda_1p+\lambda_2P_r|<p^{-\tau},
$$
where $r=3,$ $\tau=\frac1{118}$, and $\lambda_0$ is an arbitrary real number and $\lambda_1,\lambda_2\in\BR$
with $\lambda_2\neq0$ and $0>\frac{\lambda_1}{\lambda_2}$ not in $\mathbb{Q}$.
This improves a result by Harman. Moreover, we show that one can require the prime $p$ to be of the form $\floor{n^c}$ for some positive integer $n$, i.e. $p$ is a Piatetski-Shapiro prime, with $r=13$ and $\tau=\rho(c),$ a constant explicitly determined by $c$ supported in $\left(1, 1+\frac1{149}\right].$
\end{abstract}
\maketitle

\tableofcontents

\section{Introduction}\label{s:1}

In Diophantine Approximation, a classical theorem of
Kronecker (\cite{2:Gal67}, Theorem 440)
indicates that there are infinitely
many solutions in positive integers $n_1,n_2$ of
$$
|\lambda_0+\lambda_1n_1+\lambda_2n_2|
<3\left(\max\left\{\frac{n_1}{\lambda_2},\frac{n_2}{\lambda_1}
\right\}\right)^{-1},
$$
where $\frac{\lambda_1}{\lambda_2}$ is irrational and $\lambda_0$ is
an arbitrary real number.

The case where $n_1$ and $n_2$ are both primes is of great interest and
remains open to date (\cite{7:Ram77}, \cite{8:Sri82}).
The first approximation in this direction has been given by Vaughan
\cite{Vau76} who
proved that
there are infinitely many solutions of
$$
|\lambda_0+\lambda_1p+\lambda_2P_4|<p^{-1/600000},
$$
where and henceforth in this paper the letter $p$ denotes a prime and
$P_r$ a number with at most $r$ prime factors. Harman \cite{1:Har84} proved that
there are infinitely many solutions of
\begin{equation}\label{eqn:1}
|\lambda_0+\lambda_1p+\lambda_2P_3|<p^{-\tau},
\end{equation}
with $\tau=\frac1{300}$.

In this paper,
we will improve Harman's result by showing that in \eqref{eqn:1} one can
actually take $\tau=\frac1{118}$. One of the main results of
this paper will be the following.
\begin{thm}\label{p:main}
For $\lambda_0,\lambda_1,\lambda_2\in\BR$
with $\frac{\lambda_1}{\lambda_2}$ both negative and irrational,
there are infinitely many solutions of
$$
|\lambda_0+\lambda_1p+\lambda_2P_3|<p^{-\frac1{118}}.
$$
\end{thm}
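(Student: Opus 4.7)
The plan is to use the Davenport--Heilbronn variant of the circle method with sieve weights in the third variable. Let $X$ be a large parameter, set $Y = -(\lambda_1/\lambda_2) X$ (positive by hypothesis), and consider the weighted counting function
$$
S(X) = \sum_{p \sim X} \sum_{n \sim Y} \rho_n \, K(\lambda_0 + \lambda_1 p + \lambda_2 n),
$$
where $\rho_n$ are sieve weights concentrated on integers with at most $3$ prime factors and $K$ is a nonnegative bump function with $K \ge 1$ on $[-X^{-1/118}, X^{-1/118}]$ whose Fourier transform decays rapidly. It suffices to prove $S(X) \to \infty$ along a sequence $X \to \infty$, since then infinitely many pairs $(p, n)$ with $n = P_3$ satisfy $|\lambda_0 + \lambda_1 p + \lambda_2 n| < X^{-1/118} \asymp p^{-1/118}$.

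Opening $K$ by Fourier inversion reduces the problem to controlling
$$
\int_{-\infty}^{\infty} \widehat K(\alpha) \, e(\alpha \lambda_0) \, F(\lambda_1 \alpha) \, G(\lambda_2 \alpha) \, d\alpha,
$$
with $F(\beta) = \sum_{p \sim X} e(\beta p)$ and $G(\beta) = \sum_n \rho_n e(\beta n)$. I will decompose the real line into a \emph{major arc} $|\alpha| \le (\log X)^B / X$, producing the main term via the prime number theorem, the sifting density of the sieve, and irrationality of $\lambda_1/\lambda_2$ (which forces uniformity in $\lambda_0$); \emph{intermediate arcs} $(\log X)^B/X < |\alpha| \le X^{1/118}(\log X)^B$, which must be killed through cancellation in $F$ or $G$ combined with Diophantine approximations $|q\lambda_i - a_i| \le q^{-1}$; and a negligible tail controlled by the rapid decay of $\widehat K$.

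Cancellation on the intermediate arcs will come from combining Vinogradov--Vaughan-type bounds on $F$ with bilinear-sum bounds on $G$. For the latter I will apply Harman's sieve to express $\rho_n$ as a linear combination of Type I sums $\sum_m a_m \sum_{k \sim X/m} e(\lambda_2 \alpha m k)$ and Type II sums $\sum_{m \sim M}\sum_{k \sim X/M} a_m b_k \, e(\lambda_2 \alpha m k)$, both of which reduce by Cauchy--Schwarz and continued-fraction arguments to expressions of the shape $\sum_{m \le M}\min(X/m, \|\lambda_2 \alpha m\|^{-1})$. To beat Harman's exponent $1/300$, I will enlarge the admissible Type I range using Heath-Brown's identity and a sharpened mean-value input, and exploit role-reversal in the Buchstab iteration so that a larger proportion of the $P_3$-indicator is captured by bilinear sums falling inside the reachable range.

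The \emph{main obstacle} is the joint optimization of these inputs. Enlarging $\tau$ shrinks the intermediate-arc window and thereby weakens the cancellation available in $F$, while at the same time the sieve decomposition must tolerate shorter bilinear ranges and still produce a positive lower bound for the number of surviving $P_3$ weights. The exponent $1/118$ should emerge as the precise balancing point between (i) the minor-arc exponential-sum savings and (ii) the sieve's admissibility region for $r = 3$; verifying this balance, and carrying out the numerical check that the Buchstab iteration closes with a strictly positive lower bound for $S(X)$, is where the bulk of the technical work will lie.
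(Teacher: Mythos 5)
Your proposal runs the Davenport--Heilbronn circle method: a nonnegative kernel $K$, Fourier inversion, and a major/intermediate-arc dissection of $\int\widehat K(\alpha)F(\lambda_1\alpha)G(\lambda_2\alpha)\,d\alpha$. The paper does something structurally different and more economical: it replaces $\lambda_1$ by a continued-fraction convergent $a'/q$ with $q\asymp X^{1/3+\rho+\eta}$, reduces the Diophantine inequality to $\bigl\|\frac{b'+pa'}{q}\bigr\|<\tfrac{\xi}{2}$, and then sieves the set $\CA=\bigl\{\round{(b'+pa')/q}:p\le X,\ \|(b'+pa')/q\|<\tfrac{\xi}{2}\bigr\}$ directly. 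The exponential sums of Section~\ref{error:1} enter only to verify the remainder bounds $\sum_d|R_d|\ll\xi\pi(X)X^{-\eta}$, i.e.\ the sieve's level of distribution; there is no kernel, no arc dissection, and no bilinear-form treatment of $G$. So your framework is a genuinely different route of the Vaughan~(1976) flavor, while the paper follows and refines Harman~(1984).

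That said, there is a concrete gap: nothing in your sketch accounts for how $\tau=\tfrac{1}{118}$ is actually reached. You attribute the gain to ``enlarging the admissible Type~I range using Heath-Brown's identity,'' but the level of distribution in the paper is essentially Harman's $\theta_1=\tfrac13-\rho-\varepsilon$ on the linear-sieve side; sharpening the exponential-sum input alone lands you around Harman's $\tfrac1{300}$, or the straightforward reparametrization $\tfrac1{147}$ the paper mentions in Section~6. The actual improvement is on the sieve side and rests on three specific devices: (i) Laborde's Buchstab-type weights $\widetilde{w_p}$ replacing Richert's $w_p=1-u\log p/\log X$, together with the weighted inequality $\CS\ge\CJ(\lambda)$ for $\lambda^{-1}<5c-a$; (ii) a two-dimensional sieve applied to the auxiliary set $\widetilde\CA$, which carries the enhanced level $\theta_2=\tfrac23-\rho$ (Lemmas~\ref{p:lem6-new}--\ref{p:lem7-new}) and is used on the top range $p\in[X^\delta,y]$ in place of the linear $F_1$; and (iii) Chen's switching principle via $\fN(\beta)$ and $\CS(\CA(\beta)^*,X^{1/2-\eta})$ to cap the loss from products of four primes. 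Your ``role-reversal in the Buchstab iteration'' gestures toward (iii) but not (i) or (ii), and the claim that ``$1/118$ should emerge as the precise balancing point'' is asserted rather than established. Without specifying the weight scheme, the two levels of distribution, and the explicit inequality $\mathcal H_\delta(\vartheta,b,c)>0$ to be optimized, the proposal does not yet reach the stated exponent.
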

Moreover, recall that in \cite{HB83} Heath-Brown proved Pjatecki-$\check{S}apiro$ prime number theorem, i.e.
\begin{align*}
\pi_{c}(x):=\sum_{\substack{n\leq x\\\floor{n^c} \text{is a prime}}}1=c^{-1}Li(x)+O\left(x e^{-\delta\sqrt{\log x}}\right) ,
\end{align*}
where $c$ is a real number satisfying that $1<c<\frac{755}{662}=1.1404...$, and $\delta=\delta(c)>0$. Thus we can naturally ask, what will happen if we replace the prime number theorem in the main term by Pjatecki-$\check{S}apiro$ prime number theorem? Can we require the prime $p$ in Theorem \ref{p:main} to be a Pjatecki-$\check{S}apiro$ prime?

The answer is positive, although at cost of increasing the number of factors of the corresponding almost-prime, and we will give a concrete describe about it as follows.
\begin{thm}\label{p:main2}
For $c\in \left(1, 1+\frac1{149}\right]$, $\lambda_0,\lambda_1,\lambda_2\in\BR$
with $\frac{\lambda_1}{\lambda_2}$ both negative and irrational,
there are infinitely many solutions of
$$
\left|\lambda_0+\lambda_1\tilde{p}+\lambda_2P_{13}\right|<\tilde{p}^{-\rho(c)},
$$
where $\tilde{p}$ is a prime of the form $\floor{n^c}$ for some positive integer $n$ and
$$
\rho(c):=\frac{1+9(c^{-1}-1)}{12}-\frac{c}{13-0.144}.
$$
\end{thm}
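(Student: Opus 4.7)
The strategy is to graft Heath-Brown's analytic treatment \cite{HB83} of Piatetski-Shapiro primes onto the Davenport-Heilbronn--Harman circle method that underlies Theorem~\ref{p:main}. Fix a large parameter $X$, set $\eta = X^{-\rho(c)}$, and let $K$ be a Davenport-Heilbronn smooth kernel majorising the indicator of $\{|x|<\eta\}$ whose Fourier transform $\widehat{K}$ is compactly supported. With $w_{13}(m)$ a Harman-type sieve weight detecting integers with at most $13$ prime factors, it suffices to prove
\begin{equation*}
S(X) := \sum_{\substack{\tilde{p}\le X\\ \tilde{p}=\floor{n^c}}} \sum_{m\asymp X} w_{13}(m)\, K\bigl(\lambda_0+\lambda_1\tilde{p}+\lambda_2 m\bigr) > 0
\end{equation*}
for arbitrarily large $X$. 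Expanding $K$ via Fourier inversion yields $S(X)=\int\widehat{K}(\alpha)\,\Sigma(\alpha)\,d\alpha$, where $\Sigma(\alpha)$ is a double exponential sum; the $\alpha$-integral is then dissected into a central major arc, trivial arcs absorbed by the decay of $\widehat{K}$, and minor arcs exactly as in \cite{Vau76,1:Har84}.

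The Piatetski-Shapiro constraint is handled by the standard floor-difference identity
\begin{equation*}
\mathbf{1}_{\tilde{p}=\floor{n^c}} \;=\; \floor{-\tilde{p}^{1/c}}-\floor{-(\tilde{p}+1)^{1/c}},
\end{equation*}
combined with Vaaler's trigonometric polynomial, producing a truncated expansion $c^{-1}+\sum_{0<|h|\le H}a_h\,e(-h\tilde{p}^{1/c})+O(H^{-1})$ for the indicator. The $h=0$ term furnishes the density factor $c^{-1}$ and yields, via the major arc, a main term of size $\gg \eta c^{-1}X/\log X$, consistent with Heath-Brown's prime number theorem for $\floor{n^c}$. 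Each $h\neq 0$ term must be shown to contribute $o$ of this main term; after Harman's sieve identity decomposes $w_{13}(m)$, this minor-arc task reduces to bilinear Type~I and Type~II estimates for
\begin{equation*}
\sum_{m\le M}a_m\sum_{n}b_n\,e\bigl(\alpha\lambda_2 mn - h(mn)^{1/c}\bigr),
\end{equation*}
which are treated by iterated Weyl differencing and the $AB$-process of van der Corput.

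The two summands of $\rho(c)$ reflect the two sources of loss. The first, $\frac{1+9(c^{-1}-1)}{12}$, is the best minor-arc saving one can extract after optimising the van der Corput iteration on the $x^{1/c}$ phase, the factor $9(c^{-1}-1)$ quantifying the shrinkage of the admissible Type~II range caused by the Piatetski-Shapiro derivative bound. The second, $c/(13-0.144)$, encodes the Harman sieve loss at dimension $r=13$, with the numerical constant $0.144$ arising from the accompanying Buchstab iterations. The chief obstacle is the genuine tension between these two mechanisms: Harman's sieve needs a wide Type~II window, while the extra $e(-h(mn)^{1/c})$ oscillation narrows it. Reconciling them forces the jump from $r=3$ (as in Theorem~\ref{p:main}) to $r=13$ and restricts $c$ to $\left(1,1+\tfrac{1}{149}\right]$, the precise range in which $\rho(c)$ remains positive.
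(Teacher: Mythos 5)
Your high-level architecture is the right one: dissect the inequality via a finite trigonometric approximation, split the Piatetski--Shapiro indicator into a smooth density term plus oscillatory pieces via the floor-difference identity, estimate the resulting hybrid exponential sums, and finish with a sieve. This is essentially what the paper does, and your reading of the two summands in $\rho(c)$ is correct in outline (the first is the level of distribution $\theta_3=\frac{9\gamma-8}{12}-\rho$ permitted by the hybrid exponential-sum estimate; the second is what the sieve step demands, namely $\theta_3\geq c/(13-0.144)$). Two places where your route diverges from the paper's, and where the sketch is too optimistic as written:

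\emph{The hybrid exponential sums.} You propose handling $\sum_m a_m \sum_n b_n\,e(\alpha\lambda_2 mn - h(mn)^{1/c})$ by ``iterated Weyl differencing and the $AB$-process.'' That is the right family of tools, but it is not a two-line deduction: the estimate needed is a mean value over $h\sim H$ of $\bigl|\sum_n\Lambda(n)e\bigl(h(n+\iota)^\gamma+\tfrac{aln}{dq}\bigr)\bigr|$, uniform in the algebraic phase $\tfrac{aln}{dq}$, and with the crucial weight $\min\{1,X^{1-\gamma}/H\}$. Getting the clean exponent $\sigma<\frac{9\gamma-8}{12}$ from this requires Heath-Brown's identity to produce Type~I/II bilinear sums, then a van der Corput analysis of those, plus the separate algebraic piece coming from the smooth part $(n+1)^\gamma-n^\gamma$. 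The paper does not rederive any of this: it simply invokes Balog--Friedlander's \emph{hybrid of Vinogradov and Piatetski--Shapiro} theorem (their Pacific J.\ Math.\ 1992 paper) for the oscillatory part and the already-established Lemma~\ref{p:lem6} for the smooth part. Your sketch needs either to import that result explicitly or to acknowledge that re-proving it is a substantial subtask.

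\emph{The sieve step.} You propose ``Harman-type sieve weights'' $w_{13}(m)$ inside the circle-method integral, as if one could replay the weighted sieve from Theorem~\ref{p:main} at dimension $r=13$. The paper explicitly abandons that machinery here: because $\theta_3$ is so small (roughly $1/12$), there is no room for the two-dimensional sieve, Selberg trick, or switching principle used for $r=3$. Instead it applies Laborde's Buchstab-weight theorem directly to the sequence $\CB$; the constant $0.144$ is Laborde's explicit $\Lambda$ in the inequality $\theta_3\geq c/(r-\Lambda)$, and the paper even sharpens his stated $0.145$ to $0.144$ by re-evaluating his integral. If you instead use a generic Richert-type weight you will in fact get a worse constant and would \emph{not} reach $r=13$ with this level (it would force $\rho\leq 0$ near $c=1$). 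So the sieve step is not interchangeable; the specific Laborde result is doing real work and should be named.

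Neither issue is fatal to the overall strategy, but as written the proposal conceals the two ingredients that actually carry the proof.
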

\begin{remark}
We can take $\rho(c)=\frac1{180}$, when $c=1+2\times10^{-10}$.
\end{remark}

\textbf{Acknowledgements.}
abc

\section{Notation and outline of the method}\label{s:2}
\subsection{Notation}
We shall use $\eta$ and $\varepsilon$ for arbitrary small positive numbers (especially we require $\varepsilon\leq\eta\leq 10^{-12}$);
 and sometimes they may be slightly different in context
 just for simplicity.

We write $\floor{x}$ for the largest integer not exceeding $x$.
We write $\|x\|$
for the distance from $x$ to a nearest integer and $\round{x}$ for the nearest
integer to $x$ when $\|x\|\neq\frac12$.
Clearly we may assume that $\lambda_1>0$ and $\lambda_2=-1$. Let
$\frac {a'}q$ be a
convergence to the continued fraction for $\lambda_1$ and assume $q$ to be
quite large in terms of $\lambda_0$, $\lambda_1$ and $\lambda_1^{-1}$;
let $X$ be a large number
such that $q\asymp X^{\frac13+\rho+\eta}$. Trivially, one
can write $\lambda_0=\frac bq+\gamma$ with $|\gamma|<\frac 1q$.

As in \cite{1:Har84}, we assume that $q$ is so large that
$\min\{\frac {a'}q,\frac q{a'}\}>X^{-\frac\rho4}$
and $a'X+b'<qX^{1+\frac\eta4}.$
In this paper, $p$, $\tilde{p}$, $p_i, i=1,2,\dots$ represent primes; $\sum^\flat$ indicates that the summation is only over square-free numbers.
For convenience, we shall denote by
\begin{align*}
e(x)&:=\exp(2\pi ix),\qquad\xi:=X^{-\rho},\quad\text{where $\rho$ is a positive number}; \displaybreak[0]\\
P(z)&:=\prod_{p\leq z}p,\qquad Y:=\floor{3\xi^{-1}X^\eta};\displaybreak[0]\\
\pi_{c}(x)&:=\sum_{\substack{n\leq x\\\text{$\floor{n^c}$ is a prime}}}1,\qquad \pi(x):=\sum_{\substack{p\leq x\\\text{$p$ is a prime}}}1.
\end{align*}
\subsection{The weighted sieve}
Essentially, to prove Theorem \ref{p:main}, if we use the same method as in \cite{1:Har84} but with
 a parameterized weight to optimize the result, we
will obtain that $\tau=\frac{1}{147}$ is admissible as mentioned in Section 6. However, one can expect to obtain a better result by using Buchstab's sifting weights in \cite{Lab79} rather than Richert's weight $w_p:=1-\frac{u\log p}{\log X}$, together with Selberg's trick, as in \cite{IL81}. We will show
in Theorem \ref{p:thm1} that some terms in the resulting sums can be estimated more efficiently
by using a 2-dimensional sieve, rather than using the linear sieve only. The 2-dimensional
 sieve helps us sieve primes in a much larger range, which will give a better result. Moreover, combining with Chen's idea, i.e., the so-called Switching Principle, as in \cite{1:Har84}, we can thus improve Harman's result. The last step is to work out the restrictions of those parameters
both from main terms and error terms explicitly, and then figure
out the optimal results from them, which can be done by
\emph{Mathematica 9}.\\

We will put the proof Theorem \ref{p:main2} in the last section, as it's somewhat similar to that of Theorem \ref{p:main}. For instance, the exponential sums appearing in the error terms can actually be divided into two parts roughly, one of which can actually be handled by results in Section \ref{error:1}. Nevertheless, we need a lemma to estimate the other part because it is an exponential sum of analytic type. All these will be done in Section \ref{s:15}.

Also, we will cover a slight gap of \cite{1:Har84} in Section \ref{error:1}.
\begin{remark}
Selberg's trick can often help us slightly expand the range of sifting, e.g. see \cite{Irv 15}, where the sifting set is naturally multiplicative by the Chinese reminder theorem, and thus is easier to handle. However, the sifting set here has no multiplicative structure, so we have to use other tricks to conquer.
\end{remark}

 As it points out in \cite{1:Har84} it
suffices to show that the number of solutions of
$$
\left|\frac {b'}q+\frac{pa'}q-P_3\right|<\frac{X^{-\rho}}2
$$
tends to infinity with $X$. Here
$p < X$, $P_3<\frac{a'X+b'}q$.
Hence, we will work with the set
\begin{align*}
\CA&:=\left\{\round{\frac{b'+pa'}q}:
p\leqslant X,\left\|\frac{b'+pa'}q\right\|<\frac\xi2
\right\}. \\
\end{align*}

Here we list all notation used in the sieve method:
\begin{align*}
\CH_r&:=\{n\in\CH:r\mid n\},\quad\text{for any finite set of positive integers }\CH;\\
\fN(\beta)&:=\Bigg\{p_1p_2p_3p_4:X^\beta\leqslant p_1<2X^\beta,
p_1\leqslant p_2\leqslant\left(\frac{a'X+b'}{qp_1}\right)^{\frac13}, \\
& p_2\leqslant p_3\leqslant\left(\frac{a'X+b'}{qp_1p_2}\right)^{\frac12},
X^{\frac\alpha4}\leqslant p_4\leqslant\frac{a'X+b'}{qp_1p_2p_3}
\Bigg\}; \displaybreak[0]\\
\CA(\beta)^*&:=\left\{
n:n\leqslant X,\left\|\frac{b'+na'}q\right\|<\frac\xi2,
\round{\frac{b'+na'}q}\in\fN(\beta)
\right\}; \displaybreak[0]\\
\fP_r&:=\{n\in\BN:n\text{ has at most }r\text{ prime divisors}\};
\displaybreak[0]\\
R_d&:=\#\CA_d-\frac{\pi(X)\xi}{d};\qquad
\CS:=\sum_{n\in\CA\cap\fP_3}1;\\
\widetilde{w_p}&:=
\begin{cases}
\displaystyle cw_p, \hspace{0.5cm} if \quad p=P_n \quad or \quad p\geq x^{b/a};\\[0.5em]
\displaystyle \min \left(cw_p, c-b-1+a\frac{\log P_n}{\log x}\right),\hspace{0.5cm}otherwise,
\end{cases}\\
&\quad \text{where $1\leq b \leq c \leq a=cu$ and $w_p:=1-u\frac{ \log p}{\log x}$}.
\displaybreak[0]\\
\CW(\CA,u,\lambda)&:=\sideset{}{^\flat}\sum_{\substack{s\in\CA\\
\left(s,P\left(X^{\frac1a}\right)\right)=1
}}\left(1-\lambda \sum_{\substack{X^{\frac1a}\leqslant p
\leqslant X^{\frac ca}\\
p\mid s}}\widetilde{w_p}\right)
+\sum_{p\geqslant X^{\frac1a}}\sum_{\substack{h\in\CA\\
p^2\mid h}}1; \displaybreak[0]\\
\CS(\CA(\beta)^*,z)&:=\sum_\beta\sum_{\substack{n\in\CA(\beta)^*\\
(n,P(z))=1}}1;
\end{align*}
where
$0<\frac4a\leqslant 4\beta\leqslant 1,$ both are undetermined parameters.

Define
$$
\CJ(\lambda):=\CW(\CA,u,\lambda)-\lambda\CS(\CA(\beta)^*,X^{\frac12-\eta}).
$$

For simplicity, we shall denote by $z:=X^{\frac1a}$,
$y:=X^{\frac ca}$.
\begin{lemma}
Assume that $b=1$ or $b>1$ such that $a\geq3c+b+1$, then we have
\begin{equation}\label{eqn:3}
\CS\geq\CJ(\lambda) \quad\text{if}\quad \lambda^{-1}<5c-a.
\end{equation}
\end{lemma}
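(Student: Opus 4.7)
The plan is to establish $\CS \geq \CJ(\lambda)$ by analyzing the contribution of each $n \in \CA$ to both sides. I would begin by recording that the modified weight $\widetilde{w_p}$ is non-negative on the sifting range $z \leq p \leq y$: the branch $cw_p = c(1 - u\log p/\log X)$ is non-negative since $u = a/c$ forces $w_p \geq 0$ on $[z, y]$, while for the ``min'' branch the alternative expression $c - b - 1 + a\log P_n/\log X$ is at least $c - b \geq 0$ using $b \leq c$ and $P_n \geq z$. Consequently $1 - \lambda\sum_{p\mid s,\, z \leq p \leq y}\widetilde{w_p} \leq 1$ for every square-free $s \in \CA$ coprime to $P(z)$, so that the contribution of such an $s \in \fP_3$ to $\CW$ is already dominated by its contribution $= 1$ to $\CS$.

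Next I would decompose the remaining square-free $s \in \CA$ coprime to $P(z)$ by the number of prime factors $r \geq 4$, writing $s = p_1 \cdots p_r$ with $z \leq p_1 \leq \cdots \leq p_r$. For $r \geq 5$, the target is the pointwise bound $1 - \lambda \sum \widetilde{w_p} \leq 0$, which via $\lambda^{-1} < 5c-a$ reduces to $\sum \widetilde{w_p} \geq 5c - a$. In the simplest setting, where $\widetilde{w_{p_i}} = cw_{p_i}$ for every $i$ (automatic when $b = 1$, since the ``min'' branch then collapses by direct computation), the bound follows from $\sum cw_{p_i} = c(r - u\log s/\log X) \geq c(r - u) = rc - a$. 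When $b > 1$, primes $p_i < X^{b/a}$ can invoke the genuine ``min'' branch with loss $cw_{p_i} - \widetilde{w_{p_i}} \leq b - 1$; the hypothesis $a \geq 3c + b + 1$ provides exactly the slack needed so that the degraded lower bound $rc - a - (b-1)\lvert M\rvert$ still exceeds $5c - a$ for $r \geq 5$, where $\lvert M\rvert$ counts the ``min'' primes. The case $r = 4$ is the delicate one and is dispatched by Chen's Switching Principle: any admissible four-prime-factor $s = p_1 p_2 p_3 p_4 \in \CA$ coprime to $P(z)$ with $p_1 \in [X^\beta, 2X^\beta)$ is captured by some $\fN(\beta)$, and its positive contribution to $\CW$ is precisely absorbed by the subtracted term $-\lambda\CS(\CA(\beta)^*, X^{1/2-\eta})$.

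Finally, the non-square-free contribution is carried by the explicit correction $\sum_{p\geq z}\sum_{h\in\CA,\,p^2\mid h}1$ built into $\CW$: elements $n = p^2 m \in \CA$ with $p \geq z$ are counted once per large prime square divisor, and the structural size constraint $p^2 \leq n \leq X$ together with the standard Richert-type bookkeeping matches this correction to the non-square-free portion of $\CS$ on $\fP_3$. Combining the three regimes yields $\CS \geq \CJ(\lambda)$ under the stated hypotheses. The main obstacle will be the $r = 4$ case: one must verify that the dyadic decomposition $\{\fN(\beta)\}$ exhausts every admissible four-prime-factor $s \in \CA$ coprime to $P(z)$ with positive weight, matching the size constraints in the definition of $\fN(\beta)$ (including reconciling the parameter $\alpha$ with the sifting parameter $a$) to those implicit in $\CA$. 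This matching, together with the control of the ``min'' branch of $\widetilde{w_p}$ when $b > 1$ in the $r \geq 5$ analysis, is precisely where the technical condition $a \geq 3c + b + 1$ enters, beyond the weaker $b \leq c$ that suffices for the positivity step.
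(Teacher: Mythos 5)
Your overall structure matches the paper's: use the nonnegativity of $\widetilde{w_p}$ to dismiss the $s\in\fP_3$ contribution, then analyse the remaining square-free $s$ coprime to $P(z)$ according to the number $r$ of prime factors, handling $r=4$ by the switching term $\lambda\CS(\CA(\beta)^*,X^{1/2-\eta})$, and your $b=1$ branch of the $r\geq5$ case is argued correctly. However, the $b>1$ branch of the $r\geq5$ case contains a genuine gap. The inequality you invoke, namely $rc-a-(b-1)\lvert M\rvert \geq 5c-a$, is equivalent to $(r-5)c\geq(b-1)\lvert M\rvert$; this does not involve $a$ at all, so the hypothesis $a\geq3c+b+1$ cannot supply the slack you claim, and the inequality already fails when $r=5$, $b>1$ and $\lvert M\rvert\geq1$. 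A ``worst-case loss per min-prime'' accounting simply cannot reach the target $5c-a$ in that regime.

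The paper's argument is structurally different and exploits a cancellation that your accounting misses. It splits on whether the minimum in the definition of $\widetilde{w_p}$ is actually attained at the second branch $c-b-1+a\log P_s/\log X$ for some $p\mid s$ with $p>P_s$ (equivalently, whether $\log p/\log X\leq(b+1)/a-\log P_s/\log X$ for some such $p$). If yes, then already the two weights
$$
\widetilde{w}_{P_s}+\widetilde{w_p}\geq\left(c-a\frac{\log P_s}{\log X}\right)+\left(c-b-1+a\frac{\log P_s}{\log X}\right)=2c-b-1,
$$
with the dangerous $\log P_s$ contributions cancelling, and the hypothesis $a\geq3c+b+1$ enters precisely here to give $2c-b-1\geq5c-a$; the remaining $\widetilde{w_q}\geq0$ then finish this sub-case. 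If no, every $\widetilde{w_p}$ with $p\mid s$ evaluates to $cw_p$, so $\sum_{p\mid s}\widetilde{w_p}=c\,\omega(s)-a\log s/\log X\geq 5c-a$ for $\omega(s)\geq5$. Your proposal misses this dichotomy and the cancellation around $\log P_s$, so the $r\geq5$, $b>1$ case as written is not established. (Your treatment of the non-square-free correction is also somewhat misdirected — the paper absorbs non-square-free $s$ into the $O(X^{1-\alpha/4})$ error when passing from $\CS$ to the $\flat$-restricted sum, rather than matching it to the $\sum_{p\geq z}\sum_{h\in\CA_{p^2}}1$ term in $\CW$ — but that is a minor bookkeeping point compared to the $r\geq5$ gap.)
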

\begin{proof}
Notice that
$$
\CS=\sideset{}{^\flat}\sum_{\substack{s\in\CA\cap\fP_3\\
\left(s,P\left(X^{\frac\alpha4}\right)\right)=1
}}1+O\left(X^{1-\frac\alpha4}\right),
$$
thus we only need the following inequality:
\begin{multline}\label{t:1}
\sideset{}{^\flat}\sum_{\substack{s\in\CA\setminus\fP_3\\
\left(s,P\left(X^{\frac\alpha4}\right)\right)=1
}}1\leqslant
\lambda\sideset{}{^\flat}\sum_{\substack{s\in\CA\\
\left(s,P\left(X^{\frac\alpha4}\right)\right)=1
}}\sum_{X^{\frac\alpha4}\leqslant p\leqslant X^{\frac1u}}
\widetilde{w_p}\\
+\lambda\sum_\beta\sum_{\substack{n\in\CA(\beta)^*\\
\left(n,P\left(X^{\frac12-\eta}\right)\right)=1}}1
+O\left(X^{1-\frac\alpha4}\right),
\end{multline}
with the assumption that $0<\rho<\frac\alpha4$.
To this end, we divide it into two cases:

\begin{description}
\item[Case 1]
$s\in\CA\setminus\fP_4$, so that $s$
has at least $5$ prime factors.
If $s$ has a prime factor $p$ which is larger than $P_s$ and
$$
\frac{\log p}{\log X}\leq \frac{b+1}{a}-\frac{\log P_s}{\log X},
$$
then
$$
\sum_{p\mid s}\widetilde{w_p}\geq c-a\frac{\log P_s}{\log X}+c-b-1+a\frac{\log P_s}{\log X}=2c-b-1\geq 5c-a.
$$
Otherwise, every prime divisor of $s$ which is larger than $P_s$ must satisfy
$$
\frac{\log p}{\log X}\geq \frac{b+1}{a}-\frac{\log P_s}{\log X},
$$
which means that
$$
\widetilde{w_p}\geq c-a\frac{\log p}{\log X}, \qquad \text{for all}\quad p\mid s.
$$
This provides that
$$
\sum_{p\mid n}\widetilde{w_p}\geq c\omega(s)-a\frac{\log s}{\log X}\geq 5c-a.
$$
thus we have (\ref{t:1}) because of
$$
\lambda^{-1}<5c-a.
$$
\item[Case 2] $s\in\CA\cap\fP_4$ Similarly as above, we have
$\sum_{p\mid n}\widetilde{w_p}\geq 4c-a.$
So (\ref{t:1}) comes from the assumption that
$\lambda^{-1}<5c-a.$
\end{description}
\end{proof}

Therefore, we have
\begin{cor}
For $\lambda^{-1}<5c-a$, if
$$
\CJ(\lambda):=\CW(\CA,u,\lambda)-\lambda\CS(\CA(\beta)^*,X^{\frac12-\eta})
\gg\frac{\pi(X)\xi}{\log X},
$$
then theorem \ref{p:main} holds with $\tau=\rho.$
\end{cor}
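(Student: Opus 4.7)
The plan is straightforward: combine the inequality $\CS \geq \CJ(\lambda)$ from the preceding lemma with the hypothesis $\CJ(\lambda) \gg \pi(X)\xi/\log X$ to deduce that the number of elements of $\CA \cap \fP_3$ tends to infinity with $X$, and then translate each such element into a valid solution of the Diophantine inequality in Theorem \ref{p:main}.

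Since $\pi(X)\xi/\log X \asymp X^{1-\rho}/(\log X)^2 \to \infty$, the hypothesis together with $\CS \geq \CJ(\lambda)$ forces $\#(\CA \cap \fP_3) \to \infty$. By the definition of $\CA$, each contribution comes from a prime $p \leq X$ such that the rounded value $P_3 := \round{(b'+pa')/q}$ lies in $\fP_3$ and $\|(b'+pa')/q\| < \xi/2$. The standing hypotheses on the convergent $a'/q$ and on $q \asymp X^{1/3+\rho+\eta}$ --- which are precisely the conditions used in \cite{1:Har84} to reduce the original Diophantine inequality to the one displayed above the definition of $\CA$ --- ensure that $|\lambda_0 + \lambda_1 p - P_3|$ and $|b'/q + a'p/q - P_3|$ differ by $o(X^{-\rho})$, coming from $\gamma$ and from $p(\lambda_1 - a'/q)$; a sufficiently good convergent (which exists by irrationality of $\lambda_1$) absorbs both errors. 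Hence $|\lambda_0 + \lambda_1 p - P_3| < X^{-\rho} \leq p^{-\rho}$, producing a genuine solution with the required $\tau = \rho$.

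Letting $X$ tend to infinity through an increasing sequence $X_k$ and noting that distinct primes $p$ yield distinct pairs $(p, P_3)$ (up to bounded multiplicity arising from the narrow fractional-part window, which is harmless), one obtains infinitely many solutions, finishing the corollary. The only real obstacle to this step is tracking that the rounding errors are genuinely absorbed by $\xi$, but this is entirely taken care of by the standing choice of $q$ relative to $X$. The substantial work --- establishing the lower bound $\CJ(\lambda) \gg \pi(X)\xi/\log X$ --- is not part of this corollary's proof and is deferred to the subsequent sections, where it will be carried out via the two-dimensional sieve, Buchstab's weights, and the exponential-sum estimates announced in the outline.
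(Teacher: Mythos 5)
Your proposal is correct and is essentially the paper's own (unstated) argument: the corollary is an immediate consequence of combining the preceding lemma's inequality $\CS\geq\CJ(\lambda)$ with the reduction already carried out at the end of Section~2, which shows that Theorem~\ref{p:main} follows once the number of pairs counted by $\CS$ is shown to tend to infinity with $X$. The only stylistic difference is that you re-derive the error-absorption step (convergent errors being $o(X^{-\rho})$) rather than simply citing the reduction, but that adds nothing new and nothing is amiss.
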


In the following sections, we will prove that
$\CJ(\lambda)\gg\frac{\pi(X)\xi}{\log X}$ and we can take
$\rho=\frac1{118}$.
\section{Some auxiliary lemmas}
\begin{lemma}
For any $x\geq2$, we have
$$
\prod_{p\leq x}\left(1-\frac1p\right)=\frac {e^{-\gamma}}{\log x}\left(1+O\left(\frac1{\log x}\right)\right);
$$
$$
\sum_{p\leq x}\frac1p=\log\log x+c+O\left(\frac1{\log x}\right),
$$
where $c$ is an absolute constant.
\end{lemma}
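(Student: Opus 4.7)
The plan is to derive both identities by Mertens' classical route, starting from Chebyshev/Stirling estimates and ending with Abel summation plus a constant-identification step. First I would establish
\[
\sum_{n\leq x}\frac{\Lambda(n)}{n}=\log x+O(1)
\]
by comparing Stirling's formula $\log\lfloor x\rfloor!=x\log x-x+O(\log x)$ with the identity $\log\lfloor x\rfloor!=\sum_{n\leq x}\Lambda(n)\lfloor x/n\rfloor$; removing the floor introduces an error controlled by Chebyshev's bound $\psi(x)=O(x)$. Separating prime powers of exponent $\geq 2$, whose contribution converges absolutely, gives the ``weighted'' Mertens estimate
\[
T(x):=\sum_{p\leq x}\frac{\log p}{p}=\log x+E(x),\qquad E(x)=O(1).
\]

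Next, I would pass from $T(x)$ to $\sum_{p\leq x}1/p$ by Abel summation against $f(t)=1/\log t$:
\[
\sum_{p\leq x}\frac{1}{p}=\frac{T(x)}{\log x}+\int_{2}^{x}\frac{T(t)}{t(\log t)^{2}}\,dt.
\]
Substituting $T(t)=\log t+E(t)$ splits the right-hand side into the leading piece $\int_{2}^{x}dt/(t\log t)=\log\log x+O(1)$ plus a remainder $\int_{2}^{x}E(t)/(t(\log t)^{2})\,dt$, which converges as $x\to\infty$ because $E(t)=O(1)$; the tail $\int_{x}^{\infty}$ is $O(1/\log x)$. Absorbing the limit into an absolute constant $c$ yields the second asserted identity.

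For the product formula, I would take logarithms and use the Taylor expansion
\[
\log\!\left(1-\frac{1}{p}\right)=-\frac{1}{p}-\sum_{k\geq 2}\frac{1}{kp^{k}},
\]
the double sum $\sum_{p}\sum_{k\geq 2}1/(kp^{k})$ being absolutely convergent. Combined with the previous step this gives
\[
\log\prod_{p\leq x}\left(1-\frac{1}{p}\right)=-\log\log x+C+O\!\left(\frac{1}{\log x}\right)
\]
for some constant $C$; exponentiating produces the stated asymptotic with an unspecified constant in place of $e^{-\gamma}$.

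The main obstacle is therefore identifying $C=-\gamma$. I would do this via the Euler product $\zeta(s)=\prod_{p}(1-p^{-s})^{-1}$ for $\Re(s)>1$ and the Laurent expansion $\zeta(s)=(s-1)^{-1}+\gamma+O(s-1)$ at $s=1$: comparing $\log\zeta(s)=-\sum_{p}\log(1-p^{-s})$ as $s\to 1^{+}$ with the elementary asymptotic obtained above (after a standard Tauberian/monotone convergence argument to let $s\to 1^{+}$ inside the sum over $p$) pins down the constant. Steps one through three are routine; the constant identification is the only nontrivial input and is the step I would write out in most detail.
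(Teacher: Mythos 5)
The paper supplies no proof of this lemma at all: it is stated as a standard fact and followed only by the remark that these are the Mertens formulas. Your derivation is the classical one and is correct. You proceed from Stirling and $\log\lfloor x\rfloor!=\sum_{n\le x}\Lambda(n)\lfloor x/n\rfloor$ to $\sum_{p\le x}(\log p)/p=\log x+O(1)$, then apply Abel summation against $1/\log t$ to obtain $\sum_{p\le x}1/p=\log\log x+c+O(1/\log x)$, and finally take logarithms of the product, observe that $\sum_p\sum_{k\ge2}1/(kp^k)$ converges, and exponentiate. All of this is sound; the one point worth spelling out is that to get the $O(1/\log x)$ error (rather than merely $o(1)$) in the product formula you must track the tail $\sum_{p>x}\sum_{k\ge2}1/(kp^k)=O(1/(x\log x))$, which is dominated by the $O(1/\log x)$ already present from the second Mertens estimate, so the claimed error term does follow. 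Your proposed identification of the constant as $e^{-\gamma}$ via the Laurent expansion of $\zeta(s)$ at $s=1$ and an Abelian limit $s\to1^+$ is the standard route and works; an equally common alternative, which avoids any Tauberian input, is to compare $\sum_{n\le x}1/n=\log x+\gamma+O(1/x)$ directly with $\prod_{p\le x}(1-1/p)^{-1}=\sum_{n\in S(x)}1/n$ where $S(x)$ is the set of $x$-smooth integers, but either path is acceptable and the paper expresses no preference since it gives no proof.
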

\begin{remark}
These two estimates are usually called Mertens formulas.
\end{remark}
\begin{lemma}[\cite{Mont78}]\label{5}
Let $\delta_0<\frac {1}{2}$ and $\chi(t)$ be the characteristic function of interval $(-\delta_0,\delta_0)$ extended to be periodic with period 1,then there exists $A(t)$, $B(t)$ such that
$$
A(t)\leq\chi(t)\leq B(t)£¬
$$
where $A(t)$, $B(t)$ can be written as
\begin{align*}
A(t)&:=2\delta_0-(N+1)^{-1}+\sum_{1\leq|n|\leq N}A_ne(nt), \\
B(t)&:=2\delta_0+(N+1)^{-1}+\sum_{1\leq|n|\leq N}B_ne(nt),
\end{align*}
with coefficients $A_n, B_n$ satisfying $\max\{|A_n|,|B_n|\}\ll\delta_0$, for $1\leq|n|\leq N$.
\end{lemma}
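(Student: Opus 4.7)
The plan is to apply the Beurling--Selberg extremal majorant/minorant construction and then periodize. Let $B_{\ast}(z)$ denote Beurling's extremal function: an entire function of exponential type $2\pi$ with $B_{\ast}(x)\geq\operatorname{sgn}(x)$ for all $x\in\BR$, $\int_{\BR}(B_{\ast}(x)-\operatorname{sgn}(x))\,dx=1$, and Fourier transform supported in $[-1,1]$. Its existence and the relevant extremal properties are classical.

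Setting $\Delta=N+1$, form the scaled combinations
\begin{align*}
S_{+}(x)&:=\tfrac{1}{2}\bigl[B_{\ast}(\Delta(\delta_0-x))+B_{\ast}(\Delta(\delta_0+x))\bigr],\\
S_{-}(x)&:=-\tfrac{1}{2}\bigl[B_{\ast}(\Delta(x-\delta_0))+B_{\ast}(-\Delta(x+\delta_0))\bigr],
\end{align*}
which are entire of exponential type $2\pi\Delta$, satisfy $S_{-}(x)\leq\chi_{(-\delta_0,\delta_0)}(x)\leq S_{+}(x)$ on $\BR$, and $\int_{\BR}S_{\pm}(x)\,dx=2\delta_0\pm\Delta^{-1}$. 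Periodize by setting $B(t):=\sum_{k\in\BZ}S_{+}(t+k)$ and $A(t):=\sum_{k\in\BZ}S_{-}(t+k)$; the pointwise inequality lifts to $A(t)\leq\chi(t)\leq B(t)$ for every $t$. By Poisson summation,
$$
B(t)=\sum_{n\in\BZ}\widehat{S_{+}}(n)\,e(nt),
$$
and since $\widehat{S_{+}}$ is supported in $[-\Delta,\Delta]$ with $\widehat{S_{+}}(\pm\Delta)=0$ by the extremal property of $B_{\ast}$, the right-hand side is a trigonometric polynomial of degree at most $N$. The zeroth coefficient is $\widehat{S_{+}}(0)=\int S_{+}=2\delta_0+(N+1)^{-1}$, matching the claimed shape of $B(t)$; the argument for $A(t)$ is identical.

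For $1\leq|n|\leq N$, the trivial bound yields $|B_n|=|\widehat{S_{+}}(n)|\leq\|S_{+}\|_{L^1}=2\delta_0+(N+1)^{-1}$, and analogously for $|A_n|$. Under the application-side assumption $N+1\gg\delta_0^{-1}$ (always in force in the paper) this is $\ll\delta_0$, as required. The main technical content is Beurling's extremal property that forces $\widehat{S_{\pm}}(\pm\Delta)=0$, i.e.\ degree exactly $\leq N$ rather than $\leq N+1$; this is the nontrivial input borrowed from \cite{Mont78}, after which the remainder of the proof is routine Fourier analysis and bookkeeping of the $L^1$-norms.
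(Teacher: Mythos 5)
The paper gives no internal proof of Lemma \ref{5}; it is cited to \cite{Mont78}, and your Beurling--Selberg majorant/minorant construction followed by periodization and Poisson summation is precisely the route Montgomery takes there, so the approach matches. Two remarks on details. First, the step $|A_n|\leq\|S_-\|_{L^1}=\int S_-$ is not quite right: $S_-$ is a minorant of a nonnegative function and is therefore $\leq 0$ off the interval, so $\|S_-\|_{L^1}\neq\int S_-$. The clean fix is to use $\chi-S_-\geq0$ with $\int(\chi-S_-)=(N+1)^{-1}$ and write $|\widehat{S_-}(n)|\leq|\widehat{\chi}(n)|+\|\chi-S_-\|_{L^1}\leq 2\delta_0+(N+1)^{-1}$. (Also, the vanishing $\widehat{S_\pm}(\pm\Delta)=0$ is just continuity of the Fourier transform of an $L^1$ function with compact support, not an ``extremal property'' of $B_\ast$, though this does not affect the argument.) Second, your observation about the side condition is a genuine one and exposes an imprecision in the lemma as stated: the unconditional bound $\max\{|A_n|,|B_n|\}\ll\delta_0$ is simply false when $(N+1)^{-1}\gg\delta_0$, since then $B$ has constant term $\approx(N+1)^{-1}\ll 1$ while $B(0)\geq1$, forcing some nonzero coefficient $|B_n|\gg N^{-1}\gg\delta_0$. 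The statement is only used in the regime of Lemma \ref{p:lem4-new}, where $N\asymp dX^\eta/\xi$ and $\delta_0\asymp\xi/d$, so $N\delta_0\asymp X^\eta\to\infty$ and the bound holds; it would be cleaner for the paper to state the explicit bound $|A_n|,|B_n|\leq(N+1)^{-1}+\min\{2\delta_0,(\pi|n|)^{-1}\}$ from \cite{Mont78}.
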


\begin{lemma}\label{p:lem1-new}
Suppose that $0\leq \alpha<\beta\leq1$ and $\Delta>0$ with $2\Delta<\beta-\alpha$, then there exists a smooth function $\chi$ with the period 1 satisfying that:
\begin{description}
\item[(1)]
$\chi(x)=1$ if $\alpha+\Delta\leq\{x\}\leq\beta-\Delta$, $\chi(x)=0$ if $\{x\}\leq\alpha$ or $\{x\}\geq\beta$, and $\chi(x)\in\left[0,1\right]$ otherwise.
\item[(2)]
$\chi(x)=\beta-\alpha+\sum_{1\leq|h|\leq\Delta^{-1-\varepsilon}}{c_h e(hx)}+O\left(\Delta\right)$, where
$$
c_h\ll_\varepsilon \min\{\frac1{|h|},\beta-\alpha-\Delta\}.
$$
Moreover, the function $g(x):=\sum_{1\leq|h|\leq\Delta^{-1-\varepsilon}}{c_h e(hx)}$ is real.
\end{description}
\end{lemma}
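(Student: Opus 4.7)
The plan is to construct $\chi$ by convolving the indicator function of a slightly shrunk interval with a smooth mollifier of width $\Delta$. Fix once and for all a non-negative $\psi\in C_c^\infty(\BR)$ supported in $[-\tfrac12,\tfrac12]$ with $\int_\BR\psi=1$, and set $\psi_\Delta(x):=\Delta^{-1}\psi(x/\Delta)$, a smooth approximate identity of width $\Delta$. Let $\chi_0$ denote the $1$-periodic extension of the indicator function of the interval $[\alpha+\Delta/2,\beta-\Delta/2]$, which is a genuine subinterval of $[0,1]$ by the hypothesis $2\Delta<\beta-\alpha$. I then define
$$\chi:=\chi_0*\psi_\Delta$$
as circular convolution on $\BR/\BZ$.

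Property (1) is immediate from this construction. Smoothness of $\chi$ follows from that of $\psi_\Delta$, and the range $[0,1]$ follows because $\chi$ is the convolution of a $[0,1]$-valued function with a probability density. For $\{x\}\in[\alpha+\Delta,\beta-\Delta]$, every $x-t$ with $t\in\supp\psi_\Delta\subset[-\Delta/2,\Delta/2]$ lies in $[\alpha+\Delta/2,\beta-\Delta/2]$, forcing $\chi(x)=\int\psi_\Delta=1$; and for $\{x\}\notin[\alpha,\beta]$, the shift $x-t$ always misses the support of $\chi_0$, so $\chi(x)=0$.

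For (2), the Fourier coefficients factor as $c_h=\widehat{\chi_0}(h)\,\widehat{\psi}(h\Delta)$. Direct computation of $\widehat{\chi_0}(h)=\int_{\alpha+\Delta/2}^{\beta-\Delta/2}e(-hx)\,dx$ yields $|\widehat{\chi_0}(h)|\le\min\bigl(\tfrac{1}{\pi|h|},\,\beta-\alpha-\Delta\bigr)$, and $|\widehat{\psi}(h\Delta)|\le 1$ gives the coefficient bound claimed in (2). Because $\psi$ is Schwartz, one also has $|\widehat{\psi}(\xi)|\ll_A(1+|\xi|)^{-A}$ for every $A>0$; hence on the tail range $|h|>\Delta^{-1-\varepsilon}$ we have $|h\Delta|>\Delta^{-\varepsilon}$ and
$$\sum_{|h|>\Delta^{-1-\varepsilon}}|c_h|\ \ll_A\ \sum_{|h|>\Delta^{-1-\varepsilon}}\frac{1}{|h|\,(|h|\Delta)^A}\ \ll\ \Delta^{A\varepsilon},$$
which is $O(\Delta)$ upon choosing $A>1/\varepsilon$. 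The mean is $c_0=\int\chi=\beta-\alpha-\Delta=\beta-\alpha+O(\Delta)$, so the truncated expansion in (2) holds with total error $O(\Delta)$. Reality of $g$ follows from reality of $\chi$, which forces $c_{-h}=\overline{c_h}$.

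There is no real obstacle here: the entire argument is a routine Schwartz mollification combined with the product structure of $c_h$. The only item worth checking is that circular convolution on $\BR/\BZ$ behaves like convolution on $\BR$ when $\alpha$ is close to $0$ or $\beta$ close to $1$; this is automatic because the shrunk interval still lies strictly inside $[0,1]$ and the mollifier has radius $\Delta/2$, so no wraparound occurs.
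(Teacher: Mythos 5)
Your proof is correct and takes essentially the same route as the paper: construct $\chi$ by smoothing an indicator of width $\asymp\beta-\alpha$ with a mollifier of scale $\Delta$, read the coefficient bound from the product structure of the Fourier coefficients, and use rapid decay of the mollifier's transform to truncate the Fourier series at $|h|\leq\Delta^{-1-\varepsilon}$ with error $O(\Delta)$. The only cosmetic difference is that the paper outsources the construction to Vinogradov's Lemma 12 (an $r$-fold box convolution, giving decay $c_h\ll\Delta^{-r}|h|^{-r-1}$ for a fixed $r$ chosen with $1/r\leq\varepsilon$), whereas you use a single $C^\infty_c$ Schwartz mollifier, which gives the same conclusion for all $A$ simultaneously without fixing $r$ in advance.
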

\begin{proof}
Fix $\varepsilon>0$ small enough. Then by (\cite{V54} Lemma 12, Chapter 1) we have
$$\chi(x)=\beta-\alpha-\Delta+\sum_{|h|\geq1}{\left(a_j\cos{2\pi jx}+b_j\sin{2\pi jx}\right)}.$$
 Take $c_j=\frac{a_j-ib_j}{2}$ and $c_{-j}=\frac{a_j+ib_j}{2}$ for any $j\in \BN_{\geq 1},$ then by estimations from (\cite{V54}) on $a_j$ and $b_j$, we have
$$
c_h\ll_\varepsilon \min\{\frac1{|h|},\beta-\alpha-\Delta, \frac1{{\Delta^r}|h|^{r+1}}\} \hspace{0.5cm}\text{for any arbitrary integer $\emph{r}$}.
$$
Take $r$ large enough such that $\frac1r\leq\varepsilon$ and $H:=\Delta^{-\frac{r+1}{r}}$, then
$$
\sum_{|h|\geq H}c_h e(hx)\ll \sum_{|h|\geq H}\frac{1}{{\Delta^r}|h|^{r+1}}\ll \frac{1}{{\Delta^r}|H|^{r}}\ll \Delta.
$$
Obviously, $g(x)=\sum_{1\leq|h|\leq\Delta^{-1-\varepsilon}}{\left(a_j\cos{2\pi jx}+b_j\sin{2\pi jx}\right)}$ is a real function.
\end{proof}
Set $$\CS_{\widetilde{w}}(\CA):=\sum_{\substack{s\in\CA\\
\left(s,P\left(X^{\frac\alpha4}\right)\right)=1
}}\sum_{\substack{X^{\frac\alpha4}\leqslant p
\leqslant X^{\frac1u}\\
p\mid s}}\widetilde{w_p},$$
 then by a direct computation we have
\begin{lemma}\label{pp}
\begin{align*}
\CS_{\widetilde{w}}(\CA)&=(1-\frac bc)\sum_{X^{\frac 1a}\leq p<X^{\frac ba}}\CS\left(\CA_p, X^{\frac1a}\right)
+u\int_{\frac1a}^{\frac{b+1}{2a}}\left(\sum_{X^{s}\leq p\leq X^{\frac{b+1}{a}-s}}\CS\left(\CA_p, X^{s}\right)ds\right)\\
&\qquad+\sum_{X^{\frac 1a}\leq p<X^{\frac {b+1}{2a}}}\left(\frac{b+1}{c}-\frac{2u\log p}{\log X}\right)\CS\left(\CA_p, p\right)
+\sum_{X^{\frac ba}\leq p<X^{\frac 1u}}w_p\CS\left(\CA_p, X^{\frac1a}\right).\\
\end{align*}
\end{lemma}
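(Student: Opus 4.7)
The strategy is to unpack the piecewise definition of $\widetilde{w_p}$, interchange the order of summation, and identify each piece with a standard sieve quantity $\CS(\CA_p,\cdot)$. Writing
$$
\CS_{\widetilde{w}}(\CA)=\sum_{X^{1/a}\leq p\leq X^{c/a}}\sideset{}{^\flat}\sum_{\substack{s\in\CA,\,p\mid s\\(s,P(X^{1/a}))=1}}\widetilde{w_p},
$$
one observes that $\widetilde{w_p}=cw_p$ in three of the four defining regimes and equals the linear expression $c-b-1+a\log P_s/\log X$ only in the exceptional regime $p<X^{b/a}$, $p\neq P_s$, $pP_s<X^{(b+1)/a}$ (which forces $p<X^{(b+1)/(2a)}$).

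I split the double sum into four cases: (I) $p\geq X^{b/a}$; (II) $p<X^{b/a}$ and $p=P_s$; (III) $p<X^{b/a}$, $p\neq P_s$, and $pP_s\geq X^{(b+1)/a}$; (IV) the exceptional regime. In case (I), the weight is independent of $s$ and the inner count reduces (modulo the standard $p^2\mid s$ correction absorbed into the error) to $\CS(\CA_p,X^{1/a})$, producing the fourth term. In cases (II) and (III) the weight $cw_p$ is still independent of $P_s$, and the sum over admissible $P_s$ combines to give the first term, with the coefficient $1-b/c$ arising as the relative length in log-scale of the admissible $P_s$-range after factoring out the overall $c$ in $\widetilde{w_p}$.

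Case (IV) is the crux. Here the weight depends linearly on $\log P_s$, and I convert the $P_s$-sum to an integral via the identity
$$
c-b-1+a\tfrac{\log P_s}{\log X}=a\int_{1/a}^{\log P_s/\log X}dt+(c-b),
$$
then interchange order of integration and summation. For each fixed $t\in[1/a,(b+1)/(2a)]$ the remaining double sum collapses, after relabeling and the usual identification, to $\sum_{X^t\leq p\leq X^{(b+1)/a-t}}\CS(\CA_p,X^t)$, yielding term~2 with prefactor $u=a/c$ (the slope $a$ divided by the normalizing constant $c$). The constant piece $(c-b)$ together with the boundary contribution of the partial-summation step at the diagonal $p=P_s$ assembles with the residual part of case (II) to produce the third term with its coefficient $(b+1)/c-2u\log p/\log X$, which vanishes precisely at the diagonal endpoint $p=X^{(b+1)/(2a)}$.

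The main technical obstacle is the asymmetric boundary at $p=P_s$: the definition of $\widetilde{w_p}$ forces the value $cw_p$ there instead of the (smaller) $\min$ value that would come from extending the formula of case (IV). Tracking this boundary jump carefully is what pins down the precise coefficients $1-b/c$ in the first term and $(b+1)/c-2u\log p/\log X$ in the third; the remainder of the argument is routine rearrangement together with the standard reduction from squarefree counts to the sifting functions $\CS(\CA_p,\cdot)$, where the $p^2\mid s$ contribution is absorbed into the usual error of negligible order.
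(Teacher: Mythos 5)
The paper offers no proof of this lemma beyond the phrase ``by a direct computation,'' so your attempt is filling a genuine gap; measured against what the computation actually requires, however, the assembly of cases into the four terms of the right-hand side contains a substantive error.

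Your case split (I)--(IV) is the correct starting point, but the attribution of cases (II) and (III) to the first term $(1-\tfrac{b}{c})\sum_{z\le p<X^{b/a}}\CS(\CA_p,z)$ cannot be right. In those cases the weight is $cw_p=c-a\tfrac{\log p}{\log X}$, which varies with $p$; the coefficient $1-\tfrac{b}{c}=w_{X^{b/a}}$ in term~1 is a fixed constant (the weight evaluated at the boundary $p=X^{b/a}$), and ``relative length in log-scale of the admissible $P_s$-range'' does not reproduce it. What actually happens is that the constant $(c-b)$ is subtracted from \emph{every} weight $\widetilde{w_p}$ with $p<X^{b/a}$, across cases (II), (III) \emph{and} (IV); this constant contribution is exactly what collapses, after summing over $P_s$ via $\CS(\CA_p,z)=\sum_{z\le q\le p}\CS(\CA_{pq},q)$, to the first term.

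The remaining correction $\widetilde{w_p}-(c-b)$ for each $p<X^{b/a}$ then equals $a\tfrac{\log P_s}{\log X}-1$ in case (IV), $b-a\tfrac{\log p}{\log X}$ in case (III), and $b-a\tfrac{\log P_s}{\log X}$ in case (II). The integral term~2, once expanded, contributes for \emph{every} prime factor $p<X^{b/a}$ (not only case (IV)): the upper limit $X^{(b+1)/a-t}$ on $p$ is precisely what caps the integral at $(b+1)/a-\tfrac{\log p}{\log X}$ in case (III), producing $b-a\tfrac{\log p}{\log X}$ there. It also evaluates to $a\tfrac{\log P_s}{\log X}-1$ on the diagonal $p=P_s$ when $P_s<X^{(b+1)/(2a)}$, which is $not$ the desired $b-a\tfrac{\log P_s}{\log X}$; term~3, with coefficient $\tfrac{b+1}{c}-\tfrac{2u\log p}{\log X}$ and the sieve function $\CS(\CA_p,p)$ detecting $p=P_s$, is exactly the compensator for this diagonal discrepancy, and its coefficient vanishes at $p=X^{(b+1)/(2a)}$ because the discrepancy closes there. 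So the crux is not ``case (IV) alone gives term~2 with a boundary jump patched by term~3''; rather, term~1 is a uniform constant extraction across (II)--(IV), term~2 covers the correction for all of (II)--(IV), and term~3 repairs only the $p=P_s$ line. As written, your proof would not reproduce the stated coefficients, and the assertion about cases (II)+(III) producing term~1 is a genuine gap. (Separately, if one compares with the definition of $\CS_{\widetilde{w}}$ and the factors of $c$ appearing in $\CW_1,\CW_2$ later, the lemma as printed is off by an overall factor $c$; this is a typo in the paper rather than in your argument, but it is worth flagging when carrying out the bookkeeping above.)
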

\section{Estimates for exponential sums I}\label{error:1}
Our main goal in this section is to prove that
\begin{equation}\label{4}
\sum_{d\leqslant X^\alpha}\frac\xi d\max_{N\leqslant X}
\sum_{l=1}^{dY}\left|\sum_{n\leqslant N}\Lambda(n)
e\left(\frac{anl}{dq}\right)\right|\ll{\xi\pi(X)}X^{-\eta}
\end{equation}
with $\alpha$ as large as possible.

However, the lemmas in \cite{1:Har84} can only give the result without
 taking $max$ between the two sums. We should point out that with
 some slight modifications of the proof in \cite{1:Har84} we will be able to prove \eqref{4}.

This is a generalization of \cite{1:Har84}, Lemma 3:

\begin{lemma}\label{p:lem9-new}
Suppose $X,M\geqslant 1$, $\delta>0$, $\CM$ a set of $\leqslant T$
integer points $(l,m)$ with $M\leqslant m<2M$, $\lambda_{lm}$ real numbers
for $(l,m)\in\CM$, and $\{a_n\}$ a sequence of complex numbers, then
$$
\sum_{(l,m)\in\CM}\max_{N\leqslant X}\left|
\sum_{mn\leqslant N}a_ne(\lambda_{lm}n)\right|^2\ll
D_\delta\log^3(2TX)\left(\frac XM+\delta^{-1}\right)\sum_{n\leqslant X/M}|a_n|^2,
$$
where
$$
D_\delta=\max_{(l,m)\in\CM}\#\left\{(l',m')\in\CM:
\|\lambda_{lm}-\lambda_{l'm'}\|<\delta\right\}.
$$
\end{lemma}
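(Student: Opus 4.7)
The lemma differs from Harman's Lemma 3 in \cite{1:Har84} only by the presence of the $\max_{N\leq X}$ inside the outer sum. I would prove it by running Harman's argument essentially verbatim after first applying a standard Fourier completion step that removes the max, at the cost of a single extra factor of $\log(2TX)$.

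First, since $m\geq M$ and $N\leq X$, the inner sum becomes $\sum_{n\leq K}a_ne(\lambda_{lm}n)$ with $K:=\floor{N/m}\leq Y:=X/M$. Setting $A(\alpha):=\sum_{n\leq Y}a_ne(n\alpha)$, the orthogonality identity
$$\sum_{n\leq K}a_ne(\lambda_{lm}n)=\int_0^1 A(\lambda_{lm}+\theta)\,\overline{D_K(\theta)}\,d\theta,\qquad D_K(\theta):=\sum_{k=1}^K e(k\theta),$$
together with the standard bound $|D_K(\theta)|\ll\min(K,\|\theta\|^{-1})$ and a weighted Cauchy--Schwarz with weight $w(\theta):=\min(Y,\|\theta\|^{-1})$ (so that $\int|D_K|^2/w\,d\theta\leq\int|D_K|\,d\theta\ll\log(2Y)$), yields
$$\max_{K\leq Y}\left|\sum_{n\leq K}a_ne(\lambda_{lm}n)\right|^2\ll\log(2Y)\int_0^1|A(\lambda_{lm}+\theta)|^2\,w(\theta)\,d\theta.$$

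Next, summing over $(l,m)\in\CM$ and swapping sum and integral, the problem reduces to bounding $\sum_{(l,m)\in\CM}|A(\lambda_{lm}+\theta)|^2$ uniformly in $\theta$. Since $\|\cdot\|$ is translation-invariant on $\BR/\BZ$, the shifted points $\lambda_{lm}+\theta$ retain the same $\delta$-multiplicity $D_\delta$ as the original $\lambda_{lm}$, so Harman's original Lemma 3 (the unmaxed case) applies uniformly in $\theta$ and gives the expected large-sieve bound in $Y$ and $\delta^{-1}$. Combining this with $\int_0^1w(\theta)\,d\theta\ll\log(2Y)$ and with the $\log(2Y)$ coming from the completion step then produces the claimed factor $\log^3(2TX)$.

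The main obstacle is the careful $\log$-bookkeeping: a naive Cauchy--Schwarz at the completion step easily costs an extra $\log$ and produces $\log^4(2TX)$ overall. Using the specific weight $w(\theta)=\min(Y,\|\theta\|^{-1})$ is what keeps the total at $\log^3(2TX)$, since it is matched pointwise by the completion kernel $D_K$ and makes the dual integral $\int|D_K|^2/w\,d\theta$ lose only a single $\log$ factor. The only other point to verify, namely that the well-spacedness hypothesis underlying Harman's argument is preserved under the translation $\lambda_{lm}\mapsto\lambda_{lm}+\theta$, is immediate from translation-invariance of $\|\cdot\|$.
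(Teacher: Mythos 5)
The proof is correct but takes a genuinely different route from the paper's. The paper follows Vaughan (Lemma~2 of \cite{Vau80}): it represents the sharp cutoff as
$\delta(\beta)=\int_{-A}^{A}e^{i\beta t}\,\frac{\sin\gamma t}{\pi t}\,dt+O\bigl(\tfrac{1}{A|\gamma-\beta|}\bigr)$
with $\gamma_{lm}=\log(N_{lm}+\tfrac12)$, trades the max for an integral over $t$, twists the coefficients multiplicatively by $n^{it}$, applies Harman's Lemma~3 uniformly in $t$, and finally integrates against $\min(\log X,|t|^{-1})$ to collect the last logarithm. You instead stay on the torus: for each $(l,m)$ you first reduce the inner range to a fixed length $Y=X/M$, complete with the Dirichlet kernel $D_K(\theta)=\sum_{k\leq K}e(k\theta)$ via $\sum_{n\leq K}a_ne(\lambda n)=\int_0^1A(\lambda+\theta)\overline{D_K(\theta)}\,d\theta$, and apply a weighted Cauchy--Schwarz with $w(\theta)=\min(Y,\|\theta\|^{-1})$; the pointwise bound $|D_K|\leq w$ is exactly what keeps $\int|D_K|^2/w\leq\int|D_K|\ll\log(2Y)$. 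The resulting $\sum_{(l,m)}|A(\lambda_{lm}+\theta)|^2$ is then bounded uniformly in $\theta$ by the large-sieve inequality with multiplicity $D_\delta$ (equivalently by Harman's Lemma~3 with all $m$ set to $M$), using, as you note, that translation by $\theta$ preserves the $\delta$-spacing since $\|\cdot\|$ is translation-invariant on $\BR/\BZ$; in the paper the analogous invariance comes from the multiplicative twist $a_n\mapsto a_nn^{it}$ leaving $|a_n|$ unchanged. Your version is somewhat more elementary and self-contained, avoiding the Perron/Mellin step entirely. One small remark on bookkeeping: once you have reduced to a fixed inner length $Y$, the large sieve with multiplicity $D_\delta$ itself carries no extra logarithm, so your two logs (from the completion Cauchy--Schwarz and from $\int_0^1w\,d\theta$) already suffice and your route in fact appears to yield $\log^2(2TX)$ rather than $\log^3(2TX)$; this is only sharper than what is claimed, but it means the third logarithm you attribute to Harman's Lemma~3 is not actually needed at that stage.
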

\begin{proof}
Define
$$
\delta(\beta):=\begin{cases}
1,&\text{if }0\leqslant\beta\leqslant\gamma, \\
0,&\text{otherwise},
\end{cases}
$$
which is a truncation function. Then we have
$$
\delta(\beta)=\int_{-A}^Ae^{i\beta t}\frac{\sin\gamma t}{\pi t}\mathrm dt
+O\left(\frac1{A|\gamma-\beta|}\right)
$$
as in the proof of Lemma 2 of \cite{Vau80}.
Here we take $A=2TX$, $\gamma_{lm}=\log\left(N_{lm}+\frac12\right)$, for
$(l,m)\in\CM.$
where
p$$
N_{lm}=\max\left\{n_0\in X:\max_{N\leqslant X}
\left|\sum_{mn\leqslant N}a_ne(\lambda_{lm}n)\right|^2
=\left|\sum_{mn\leqslant n_0}a_ne(\lambda_{lm}n)\right|^2
\right\}.
$$

Then we have
\begin{align*}
&\quad\sum_{(l,m)\in\CM}\max_{N\leqslant X}\left|\sum_{mn\leqslant N}
a_ne(\lambda_{lm}n)\right|^2
=\sum_{(l,m)\in\CM}\left|\sum_{mn\leqslant N_{lm}}
a_ne(\lambda_{lm}n)\right|^2
\displaybreak[0]\\
&\leq\sum_{(l,m)\in\CM}\left(\int_{-A}^A\left|\sum_n
a_nn^{it}e(\lambda_{lm}n)\right|\cdot\left|
\frac{\sin\gamma_{lm} t}{\pi t}\right|\mathrm dt\right)^2 \\
&\qquad{}+O\left(\sum_{(l,m)\in\CM}\left(\sum_n|a_n|\frac1
{A\log\frac{N_{lm}+1/2}{mn}}
\right)^2\right)
\displaybreak[0]\\
&\ll\sum_{(l,m)\in\CM}\int_{-A}^A\left|\sum_n
a_nn^{it}e(\lambda_{lm}n)\right|^2\cdot\min\{{\gamma_{lm},\frac1{|t|}}\} \mathrm dt\cdot\log A \\
&\qquad{}+O\left(\sum_{(l,m)\in\CM}\left(\sum_n|a_n|\frac1
{A\log\frac{N_{lm}+1/2}{N_{lm}}}
\right)^2\right)
\displaybreak[0]\\
&\ll\log A\cdot\int_{-A}^A\left(\sum_{(l,m)\in\CM}\left|\sum_n
a_nn^{it}e(\lambda_{lm}n)\right|^2\right)\cdot \min\{{\log X,\frac1{| t|}}\}\mathrm dt \\
&\qquad{}+O\left(\sum_{(l,m)\in\CM}\left(\sum_n|a_n|\frac1
{A\log\frac{X+1/2}{X}}
\right)^2\right)
\displaybreak[0]\\
&\ll\log A\cdot\left(\sum_{(l,m)\in\CM}\left|\sum_n
a_nn^{it}e(\lambda_{lm}n)\right|^2\right)\cdot \int_{-A}^A \min\{{\log X,\frac1{|t|}}\}\mathrm dt \\
&\qquad{}+O\left(\sum_{(l,m)\in\CM}\left(\sum_n|a_n|\frac1
{A\log\frac{X+1/2}{X}}
\right)^2\right)
\displaybreak[0]\\
&\ll D_\delta\log^2(2TX)\left(\frac XM+\delta^{-1}\right)\left(\int_0^1
\log X\mathrm dt+\int_1^A \frac 1t\mathrm dt\right)\sum_n|a_n|^2
\displaybreak[0]\\
&\ll
D_\delta\log^3(2TX)\left(\frac XM+\delta^{-1}\right)\sum_n|a_n|^2,
\end{align*}
where the last step comes from \cite{1:Har84}, Lemma 3.
\end{proof}

This is a generalization of \cite{1:Har84}, Lemma 5:

\begin{lemma}\label{p:lem10-new}
Suppose $\varepsilon>0$, $X>R$, $J,M\geqslant 1$, $1<q\leqslant X^{\frac34}$,
$\log|a|\ll\log X$, $(a,q)=1$, then
$$
\sum_{r\sim R}\max_{N\leqslant X}\sum_{j\sim J}\sum_{m\sim M}
\left|\sum_{mn\leqslant N}e\left(\frac{ajmn}{rq}\right)\right|
\ll X^\varepsilon\left(\frac{JX}q+RJM+qR^2\right).
$$
\end{lemma}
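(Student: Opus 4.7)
The plan is to adapt Harman's proof of \cite{1:Har84}, Lemma 5, with the new feature being that the maximum over $N \leq X$ must be handled without loss. The first move is to pull the max inside pointwise. Writing $\alpha = ajm/(rq)$, we have
$$
\sum_{mn \leq N} e\!\left(\frac{ajmn}{rq}\right) = \sum_{n \leq N/m} e(\alpha n),
$$
and the standard partial-sum bound for a geometric series yields
$$
\max_{N \leq X}\left|\sum_{n \leq N/m} e(\alpha n)\right| \ll \min\!\left(\frac{X}{m},\, \|\alpha\|^{-1}\right),
$$
uniformly in $N$. Subadditivity then lets us move the max inside the sums over $j$ and $m$ at no cost, reducing the problem to bounding
$$
\sum_{r \sim R}\sum_{j \sim J}\sum_{m \sim M} \min\!\left(\frac{X}{m},\, \left\|\frac{ajm}{rq}\right\|^{-1}\right).
$$

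Next I would follow Harman's argument on this $N$-free sum. The natural approach is a dyadic decomposition according to the size of $\|ajm/(rq)\|$: for each scale $\|\alpha\| \asymp 2^{-k}$, one counts the number of triples $(r,j,m)$ with $r \sim R$, $j \sim J$, $m \sim M$ whose phase lands in the corresponding interval. Fixing $r$ and using $(a,q)=1$ to invert $a$ modulo $rq$, this becomes a counting problem for the congruence $ajm \equiv h \pmod{rq}$ with $|h| \leq 2^{-k}rq$, which the hypothesis $q \leq X^{3/4}$ keeps under control via a divisor-type estimate. Summing the contributions over all dyadic scales, the three terms in the bound arise naturally: $JX/q$ from the very small $\|\alpha\|$ regime (where the denominator $q$ essentially divides $ajm$), $RJM$ from the trivial diagonal range $\|\alpha\| \asymp 1$, and $qR^2$ from the generic well-spaced regime where the fractions $ajm/(rq)$ are nearly equidistributed. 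The $X^\varepsilon$ factor absorbs the logarithmic losses coming from the dyadic decomposition.

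The main obstacle is to confirm that the uniform-in-$N$ pointwise bound really costs nothing, i.e.\ that the inequality $\max_{N\leq X}\sum_j\sum_m |\cdots| \leq \sum_j\sum_m \max_{N\leq X}|\cdots|$ together with the geometric-series estimate does not weaken Harman's counting compared to the case where the max is absent. Once this is verified, Harman's original argument transfers verbatim, with Lemma \ref{p:lem9-new} replacing his Lemma 3 at the step where a second-moment bound on partial sums of twisted sequences is required. This provides the desired improvement allowing the $\max_{N \leq X}$ to appear in \eqref{4}.
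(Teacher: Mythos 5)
Your plan is essentially the paper's own argument, just with the black box opened: the paper reduces exactly as you do, then cites Lemma~3 of \cite{Vau77} (Vaughan's type~I estimate) for the resulting $N$-free triple sum and finishes with the same estimate of $\sum_{r\sim R}(r,a)/r$ that appears in Harman's Lemma~5, whereas you propose to reprove Vaughan's estimate directly via a dyadic decomposition in $\|ajm/(rq)\|$ plus congruence counting. These are the same argument; the paper saves space by citation. The key observation --- that the uniform-in-$N$ geometric-series bound $\max_{N\leq X}\bigl|\sum_{n\leq N/m}e(\alpha n)\bigr|\ll\min(X/m,\|\alpha\|^{-1})$ is $N$-free, so the $\max_{N\leq X}$ is absorbed for free --- is correct and is precisely why the generalization holds. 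Your ``main obstacle'' therefore resolves immediately: Harman's proof of his Lemma~5 already passes through this intermediate $N$-free quantity, so the max version follows identically.

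One small slip: you write that Lemma~\ref{p:lem9-new} would be invoked here ``at the step where a second-moment bound on partial sums of twisted sequences is required.'' No such step occurs in this lemma. Lemma~\ref{p:lem10-new} is a type~I estimate and needs only the geometric-series bound and the Vaughan-style count. The second-moment machinery of Lemma~\ref{p:lem9-new} (the generalization of Harman's Lemma~3, a Gallagher-type mean-value theorem) is used in the type~II Lemma~\ref{p:lem11-new}, not here. You appear to have conflated Harman's Lemma~3 (the large-sieve lemma) with Vaughan's Lemma~3 from \cite{Vau77} (the type~I bound), which are unrelated results from different papers that happen to share a number.
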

\begin{proof}
By lemma 3 of \cite{Vau77} we obtain
\begin{multline*}
\sum_{r\sim R}\max_{N\leqslant X}\sum_{j\sim J}\sum_{m\sim M}
\left|\sum_{mn\leqslant N}e\left(\frac{ajmn}{rq}\right)\right| \\
\ll\log X(JM)^{\frac\varepsilon3}\sum_{r\sim R}\left(
\frac{JX\cdot(r,a)}{rq}+JM+qR
\right).
\end{multline*}

Hence, it follows from the same estimates in lemma 5 of \cite{1:Har84}.
\end{proof}

This is a generalization of \cite{1:Har84}, Lemma 7:

\begin{lemma}\label{p:lem11-new}
Suppose that $\varepsilon>0$, $X\geqslant R$, $L,M\geqslant 1$,
$1<q\leqslant X$, $(a,q)=1$ and $a\asymp q$,
$\max\left\{\frac{LM}{qR},\frac{qM}X\right\}<1$,
$a_n,b_m\ll X^\varepsilon$. Then
\begin{multline*}
\sum_{r\sim R}\max_{N\leqslant X}\sum_{l\sim L}
\left|\sum_{m\sim M}b_m\sum_{mn\leqslant N}
a_ne\left(\frac{lmna}{qr}\right)\right| \\
\ll X^{1+3\varepsilon}R\left(L+\frac RM\right)
\left(\frac MX+\frac1{MRL+R^2}\right)^{\frac12}.
\end{multline*}
\end{lemma}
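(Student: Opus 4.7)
The plan is to mimic the proof of Lemma 7 in \cite{1:Har84} almost verbatim, but to replace each invocation of Harman's Lemma 3 by the generalized Lemma \ref{p:lem9-new}, which already has the maximum over $N \le X$ absorbed into its conclusion via Vaughan's delta-function truncation. All the structural features of Harman's original argument---Cauchy--Schwarz in the outer variables combined with a large-sieve-type estimate on a squared exponential sum---are preserved; only the treatment of the running maximum over $N$ has to be threaded through.

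First I would push the maximum inside the $l$-sum by the trivial inequality $\max_{N \le X}\sum_l(\cdots) \le \sum_l \max_{N \le X}(\cdots)$. An application of Cauchy--Schwarz over the pair $(r,l)$ gives
$$S^2 \le RL\sum_{r \sim R}\sum_{l \sim L}\max_{N \le X}\Bigl|\sum_{m \sim M}b_m\sum_{mn \le N}a_n e\!\Bigl(\tfrac{almn}{qr}\Bigr)\Bigr|^2,$$
and a further Cauchy--Schwarz in $m$, exploiting $|b_m|\ll X^\varepsilon$, produces
$$S^2 \ll X^{2\varepsilon}RLM\sum_{r\sim R}\sum_{l\sim L}\sum_{m\sim M}\max_{N \le X}\Bigl|\sum_{mn \le N}a_n e\!\Bigl(\tfrac{almn}{qr}\Bigr)\Bigr|^2.$$

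Now I would invoke Lemma \ref{p:lem9-new} with index set $\CM = \{((l,r),m) : l \sim L,\, r \sim R,\, m \sim M\}$ of cardinality $T \asymp LRM$, frequencies $\lambda_{(l,r),m} = alm/(qr)$, and using $\sum_n|a_n|^2 \ll X^{1+2\varepsilon}/M$ (since $|a_n|\ll X^\varepsilon$ and $n \le X/M$). Absorbing the factor $\log^3(2TX)$ into $X^\varepsilon$, the result is
$$S^2 \ll X^{1+5\varepsilon}RL\, D_\delta\bigl(\tfrac{X}{M} + \tfrac{1}{\delta}\bigr).$$

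The main obstacle is the estimation of the coincidence count $D_\delta$. The defining inequality $\|alm/(qr) - al'm'/(qr')\| < \delta$ rewrites as $\|a(lmr' - l'm'r)/(qrr')\| < \delta$; together with $(a,q)=1$ and $a\asymp q$, this confines the integer $lmr'-l'm'r$ to a short residue class modulo $q$. Splitting the count into the diagonal contribution $lmr' = l'm'r$ (handled by a divisor bound, exactly as in Harman's original argument) and the off-diagonal contribution (counted elementarily), one obtains $D_\delta \ll X^\varepsilon\bigl(1 + \delta(MRL + R^2)\bigr)$, where the hypotheses $\max\{LM/(qR),\, qM/X\} < 1$ are precisely what is needed to keep the off-diagonal terms benign. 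Choosing $\delta^{-1} \asymp MRL + R^2$ then yields $D_\delta \ll X^\varepsilon$ and balances the two contributions inside $(X/M + 1/\delta)$; substituting and taking square roots gives the asserted bound, with the same hypotheses ensuring that the Cauchy--Schwarz losses are absorbed into the prefactor $R(L + R/M)$.
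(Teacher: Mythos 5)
Your proposal follows the same route as the paper: the paper's own proof of this lemma is literally the one-line remark that it is the same as Harman's Lemma~7 with his Lemma~3 replaced by Lemma~\ref{p:lem9-new} (whose conclusion already carries the $\max_{N\le X}$ inside). You have simply unwound what that reduction amounts to—two applications of Cauchy--Schwarz, the large-sieve-type input of Lemma~\ref{p:lem9-new}, and the coincidence count $D_\delta$ with $\delta^{-1}\asymp MRL+R^2$—and the algebra checks out ($RL(X/M+\delta^{-1})\le M(RL+R^2/M)^2+\frac{X}{M}(RL+R^2/M)$ since $RL\le RL+R^2/M$), so the argument is correct and matches the paper's intent.
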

\begin{proof}
The proof is essentially the same as that of lemma 7 of \cite{1:Har84},
with lemma 3 of \cite{1:Har84} replaced by lemma \ref{p:lem9-new} above.
\end{proof}

This is a generalization of \cite{1:Har84}, Lemma 8:

\begin{lemma}\label{p:lem6}
Suppose that $X,R,L\geqslant 1$, $a\asymp q$, $(a,q) = 1$, $\varepsilon > 0$ and
$\frac{TX^{\frac13}}R<q<X^{\frac23}$, where $T=\max\{L,R\}$.
Then we have
$$
\sum_{r\sim R}\max_{N\leqslant X}
\sum_{l\sim L}\left|\sum_{n\leqslant N}\Lambda(n)
e\left(\frac{anl}{rq}\right)\right|
\ll X^\varepsilon\left(X^{\frac23}TR+X^{\frac{11}{12}}(TR)^{\frac12}\right).
$$
\end{lemma}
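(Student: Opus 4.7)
The plan is to follow Harman's original proof of his Lemma 8, exploiting the fact that each of the three preceding lemmas (\ref{p:lem9-new}, \ref{p:lem10-new}, \ref{p:lem11-new}) has already been upgraded so that the inner sum can be replaced by $\max_{N\leqslant X}$. Since the max is carried through all the auxiliary estimates, the scheme of the original argument goes through unchanged, and only the final bookkeeping needs to be checked.

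First I would apply Vaughan's identity to $\Lambda(n)$ with parameters $U=V=X^{1/3}$, which writes
$\Lambda(n) = a_1(n) - a_2(n) - a_3(n) + a_4(n),$
where $a_1$ is supported on $n\le X^{1/3}$, the sums $a_2,a_3$ are Type~I (convolutions $c_m \star \mathbf{1}$ or $c_m \star \log$ with $m\le X^{2/3}$ and $|c_m|\ll X^\varepsilon$), and $a_4$ is Type~II (convolution $c_m \star d_k$ with $X^{1/3}\le m\le X^{2/3}$ and $|c_m|,|d_k|\ll X^\varepsilon$). Splitting $m$ into dyadic ranges $m\sim M$, the contribution to the triple sum over $r,l,n$ is then broken into $O(\log X)$ Type~I pieces (for $M\le X^{2/3}$) and $O(\log X)$ Type~II pieces (for $X^{1/3}\le M\le X^{2/3}$). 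The contribution of $a_1$ is trivially $\ll X^{1/3+\varepsilon} RL$, which is absorbed by $X^{2/3+\varepsilon}TR$.

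For each Type~I dyadic block I would apply Lemma~\ref{p:lem10-new} with $J=L$, obtaining a bound $\ll X^\varepsilon(LX/q + RLM + qR^2)$. Summing over $M\le X^{2/3}$ and using the hypothesis $q>TX^{1/3}/R$ (which forces $LX/q \le LRX^{2/3}/T \le RX^{2/3}$) together with $T\ge L,R$ and $q<X^{2/3}$ yields the Type~I contribution $\ll X^{2/3+\varepsilon}TR$, matching the first term of the claimed bound. For each Type~II dyadic block I would apply Lemma~\ref{p:lem11-new} to get
\[
\ll X^{1+3\varepsilon} R\!\left(L+\tfrac{R}{M}\right)\!\left(\tfrac{M}{X}+\tfrac{1}{MRL+R^{2}}\right)^{\!1/2}.
\]
Here the conditions $LM/(qR)<1$ and $qM/X<1$ must be verified: for $M\le X^{2/3}$ both follow from $q<X^{2/3}$ and $q>TX^{1/3}/R \ge LX^{1/3}/R$. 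Expanding the bracketed product, it splits into four terms. The dominant ones after summing over $M\in[X^{1/3},X^{2/3}]$ dyadically work out to $X^{11/12+\varepsilon}(TR)^{1/2}$ (arising from the $RL\cdot (M/X)^{1/2}$ and $(R^2/M)\cdot(1/(MRL+R^2))^{1/2}$ pieces when $M$ is at the extreme ends of its range); the remaining terms are smaller and absorbed by $X^{2/3+\varepsilon}TR$ under our constraints on $q$.

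The main obstacle is the Type~II bookkeeping: because the expression in Lemma~\ref{p:lem11-new} behaves differently in the regimes $L\gtrless R/M$ and $MRL\gtrless R^{2}$, one has to break the range of $M$ at $R/L$ and at $R/L$ again in the second factor, and bound each of the four resulting sub-sums separately. Once this is done, invoking $T=\max\{L,R\}$ collapses the worst case to the advertised $X^{11/12+\varepsilon}(TR)^{1/2}$. The max-over-$N$ plays no role here beyond being passed through Lemmas~\ref{p:lem10-new} and \ref{p:lem11-new}, which is exactly why the generalizations of Harman's Lemmas 3, 5 and 7 were set up in the preceding section.
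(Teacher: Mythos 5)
Your high-level plan is the same as the paper's: Vaughan's identity $\to$ Type~I / Type~II dyadic blocks $\to$ Lemma~\ref{p:lem10-new} and Lemma~\ref{p:lem11-new}. But you choose the Vaughan parameters $U=V=X^{1/3}$, which yields a Type~II range $X^{1/3}\le M\le X^{2/3}$, whereas the paper (following Harman's Lemma~8) puts Type~II in the range $X^{1/6}<M<X^{1/3}$. This is not a cosmetic difference: Lemma~\ref{p:lem11-new} carries the hypothesis $\max\left\{\frac{LM}{qR},\frac{qM}{X}\right\}<1$, and your claim that this holds for all $M\le X^{2/3}$ from $q<X^{2/3}$ and $q>TX^{1/3}/R$ is false. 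With $q$ and $M$ both near $X^{2/3}$, one has $qM/X\asymp X^{1/3}$, and with $q$ near $LX^{1/3}/R$ and $M>X^{1/3}$ one gets $LM/(qR)>1$. Under the lemma's stated bounds on $q$, the hypotheses of Lemma~\ref{p:lem11-new} are only guaranteed for $M\leqslant X^{1/3}$, i.e.\ your purported Type~II range lies entirely outside the region where the lemma applies.

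Even setting the hypotheses aside, your Type~II bookkeeping would not close: plugging $M\asymp X^{2/3}$ into the Lemma~\ref{p:lem11-new} bound gives on the order of $X^{1+3\varepsilon}RL(M/X)^{1/2}\asymp X^{5/6+3\varepsilon}RL$, which when $L=T$ is larger than both $X^{2/3}TR$ and $X^{11/12}(TR)^{1/2}$ in general. So the decomposition must be arranged so that the bilinear piece lives with the smaller variable below $X^{1/3}$ (as the paper asserts, $X^{1/6}<M<X^{1/3}$), with Type~I handling everything up to $X^{2/3}$; you cannot push Type~II out to $X^{2/3}$. The strategy is right; the specific ranges, and the hypothesis check for Lemma~\ref{p:lem11-new}, are not.
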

\begin{proof}
Using Vaughan's identity we split the inner sum above into
$\ll\log N$ sums of the form
$$
\sum_{m\sim M}\sum_{mn\leqslant N}a_nb_me\left(\frac{nalm}{dq}\right),
$$
with either

(I) $a_n=1$ or $\log n$, $M<X^{\frac23}$, $b_m\ll X^\varepsilon$, or

(II) $a_n,b_m\ll X^\varepsilon$, $X^{\frac16}<M<X^{\frac13}$.

Sums of type (I) can be handled by lemma \ref{p:lem10-new} and sums
of type (II) by lemma \ref{p:lem11-new} and the estimate above follows.
\end{proof}

\begin{cor}\label{p:cor13-new}
We have
$$
\sum_{d\leqslant X^\alpha}\frac\xi d\max_{N\leqslant X}
\sum_{l=1}^{dY}\left|\sum_{n\leqslant N}\Lambda(n)
e\left(\frac{anl}{dq}\right)\right|\ll{\xi\pi(X)}X^{-\eta}.
$$
\end{cor}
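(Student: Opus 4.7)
The plan is to reduce the estimate to a double dyadic decomposition and then invoke Lemma~\ref{p:lem6} on each piece. First, I would split $d \leq X^\alpha$ into $O(\log X)$ dyadic ranges $d \sim R$ with $R \leq X^\alpha$, and inside each such range split the sum $\sum_{l=1}^{dY}$ into $O(\log X)$ dyadic ranges $l \sim L$ with $L \leq RY \ll R X^{\rho+\eta}$. Since $\xi/d \asymp \xi/R$ for $d \sim R$, a single dyadic piece of the left-hand side of Corollary~\ref{p:cor13-new} is bounded by
$$
\frac{\xi}{R}\,\sum_{r\sim R}\max_{N\leq X}\sum_{l\sim L}\left|\sum_{n\leq N}\Lambda(n)\,e\!\left(\tfrac{anl}{rq}\right)\right|.
$$

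Next I would verify that the hypotheses of Lemma~\ref{p:lem6} apply to each piece. With $T := \max\{L,R\}$, the condition $\tfrac{TX^{1/3}}{R} < q < X^{2/3}$ reads, after using $q \asymp X^{1/3+\rho+\eta}$: the upper bound follows from $\rho+\eta < 1/3$, while the lower bound requires $T/R < X^{\rho+\eta}$, which is automatic if $T = R$ and which follows from $L \leq RY \ll R X^{\rho+\eta}$ when $T = L$ (taking $\eta$ slightly smaller to absorb the constant in $Y$). Lemma~\ref{p:lem6} then gives the bound
$$
\ll X^\varepsilon\!\left(X^{2/3}TR + X^{11/12}(TR)^{1/2}\right),
$$
so the dyadic contribution is
$$
\ll \xi\, X^\varepsilon\!\left(X^{2/3}T + X^{11/12}(T/R)^{1/2}\right).
$$

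I would then bound $T$ uniformly by $\max\{L,R\} \leq RY$ and use $R \leq X^\alpha$, $Y \ll X^{\rho+\eta}$, $(T/R)^{1/2} \leq Y^{1/2} \ll X^{(\rho+\eta)/2}$. Summing the $O(\log^2 X)$ dyadic pieces gives the overall bound
$$
\ll \xi\, X^{\varepsilon}\!\left(X^{2/3 + \alpha + \rho + \eta} + X^{11/12 + (\rho+\eta)/2}\right).
$$
Comparing with the target $\xi \pi(X) X^{-\eta} \asymp \xi X^{1-\eta}/\log X$, I need $2/3 + \alpha + \rho + \eta \leq 1 - 2\eta$ and $11/12 + (\rho+\eta)/2 \leq 1 - 2\eta$. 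The second inequality is comfortable since $\rho \leq 1/118 < 1/6$, and the first fixes the admissible range $\alpha \leq 1/3 - \rho - O(\eta)$.

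The only genuinely delicate step is the verification of the lower-bound condition $TX^{1/3}/R < q$ for $T = L$ when $L$ is at the top of its range; this is precisely where the choice $q \asymp X^{1/3+\rho+\eta}$ with the $X^\eta$-cushion in $Y = \lfloor 3\xi^{-1}X^\eta\rfloor$ buys just enough room, and is the main technical obstacle. Everything else is bookkeeping: the use of $\max_{N\leq X}$ inside the absolute value is handled already by Lemma~\ref{p:lem6} (which is why Lemma~\ref{p:lem9-new} was generalized to include the $\max_N$), so no further work is required there.
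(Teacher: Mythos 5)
Your proposal is correct and follows essentially the same route as the paper: dyadic decomposition in both $d$ and $l$, verification of the hypothesis $TX^{1/3}/R<q$ using $q\asymp X^{1/3+\rho+\eta}$ and $L\leq RY\ll RX^{\rho+\eta}$, application of Lemma~\ref{p:lem6} to each piece, and summation of the $O(\log^2 X)$ pieces to obtain $\xi X^{\varepsilon}\left(X^{2/3+\alpha+\rho+\eta}+X^{11/12+(\rho+\eta)/2}\right)$, which is acceptable under the constraints $\alpha+\rho<1/3$ and $\rho<1/6$. The paper carries out precisely this computation inside the proof of Theorem~\ref{p:thm4} rather than as a standalone proof of the corollary, so your reconstruction matches both in substance and in where the delicate point lies (the lower bound on $q$ when $L$ is near its top range $RY$).
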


\section{Sieve estimates}\label{s:3}

Let $f_1$, $F_1$ and $F_2$ be the limit functions occurred in Beta-Sieve,
which are given by the following definition:
\begin{align*}
f_1(s)&:=A_1s^{-1}\log(s-1)\qquad\text{ for }2\leqslant s\leqslant 4; \\
f_1(s)&:=A_1s^{-1}\left(\log(s-1)+\int_{3}^{s-1}\frac{du}{u}\int_{2}^{u-1}\frac{\log(v-1)}{v}dv\right)\qquad\text{ for }4\leqslant s\leqslant 6;\\
F_1(s)&:=A_1s^{-1}\qquad\text{ for }s\leqslant 3; \\
F_1(s)&:=A_1s^{-1}\left(1+\int_{2}^{s-1}\frac{\log(v-1)}{v}dv\right)\qquad\text{ for }3\leqslant s\leqslant 5;\\
F_2(s)&:=A_2s^{-2}\qquad\text{ for }s\leqslant\beta_2+1,
\end{align*}
where $A_1=2e^{\gamma}$, $\beta_2=4.8333\cdots$,
$A_2=43.496\cdots$ are defined in \cite{FI10}, Chapter 11.
We can, with a patient calculation, show that for $s\in[\beta_2+1, \beta_2+2)$, we have
\begin{align*}
F_2(s)&=s^{-2}\left(\frac{2A_2\log{\beta_2}}{s-1}+C_0+2A_2\log^2(s-1)+4A_2\log(s-1)\right)\\
&\qquad-\frac{4A_2\left(1+s\log (s-1)\right)}{s^2(s-1)},
\end{align*}
where $C_0$ is determined by $F_2(\beta_2+1)=\frac{A_2}{(\beta_2+1)^2}.$
As shown in Lemma \ref{p:lem4-new} below, the level of distribution of $\CA$ can be taken as $\theta_1=\frac13-\rho-\varepsilon.$ Henceforth, we take $a=\frac{\vartheta}{\theta_1}$ and optimize $\vartheta$ to get a better upper bound of $\rho.$ Take $z=X^{\frac1a}$ and $y=X^{\frac ca}$ from now on.

\begin{remark}
The limit functions $f_1$ and $F_2$ are actually defined by systems of differential equations piecewise respectively. $f_1$ is increasing rapidly and very close to its limit 1 when $s\geq 6$. While $F_2$ is decreasing with limit 1. We should point out that in our situation, it turns out that $6\theta-c>\beta_2+1$ since we require that $b\geq 3$, which leads $c$ to be relatively small. Thus the above expression of $F_2$ is invalid. We will discuss this matter in the next section.
\end{remark}

Denote by $A_3:=\frac{A_2}{2e^{2\gamma}}\approx 6.85577$, which will be used in the following section.\\

In this section, we will prove the following theorem,
which improves \cite{1:Har84}, Lemma 1:

\begin{thm}\label{p:thm1}
Let notations be defined as before and assume that $b=1$ or $b>1$ such that $a\geq3c+b+1$, then for any $\delta\in\left[\frac{b}{\vartheta},\frac{c}{\vartheta}\right]$ we have,
\begin{align*}
\mathcal{J}(\lambda)\geq \frac{ae^{-\gamma}(1+o(1))\lambda\xi\pi(X)}{\log X}\mathcal{H}_{\delta}(\vartheta,b,c),
\end{align*}
where
\begin{equation}\label{x:10}
\mathcal{H}_{\delta}(\vartheta,b,c)=2e^{\gamma}\left(A_{\delta}(\vartheta)b+B_{\delta}(\vartheta)c+D_{\delta}(\vartheta)+\mathfrak{F}_{\delta}(\vartheta,c)\right),
\end{equation}
with
\begin{align*}
A_{\delta}(\vartheta)&=-e^{-\gamma}f_1(\vartheta)+\frac1{2e^{\gamma}}\int_{\frac1{\vartheta}}^{\delta}
F_1\left(\vartheta(1-s)\right)\frac{ds}{s}+\frac{1}{\vartheta}\log\frac{1-\delta}{\delta};\\
B_{\delta}(\vartheta)&=e^{-\gamma}f_1(\vartheta)-\frac1{2e^{\gamma}}\int_{\frac1{\vartheta}}^{\delta}
F_1\left(\vartheta(1-s)\right)\frac{ds}{s}
-\frac{2}{a}\mathfrak{I}(\rho);\\
D_{\delta}(\vartheta)&=\frac1{2e^{\gamma}}\mathcal{H}(\vartheta,\vartheta\theta,\vartheta\theta)-\delta\log\frac{1-\delta}{\delta}
+\frac{2\vartheta\theta}{a}\mathfrak{I}(\rho);\\
\mathfrak{F}_{\delta}(\vartheta,c)&=- ae^{-\gamma}\left(\int_{\delta}^{\frac c\vartheta}\left(\frac cs-\vartheta\right)
F_2\left(a\theta_2-\vartheta s\right)\mathrm ds\right).
\end{align*}
\end{thm}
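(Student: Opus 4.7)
The plan is to combine a lower-bound linear beta-sieve on the full sifting sum with upper-bound sieves (both one- and two-dimensional) on each weighted piece produced by Lemma \ref{pp}, and to handle $\CS(\CA(\beta)^*, X^{1/2-\eta})$ via Chen's switching principle. The threshold parameter $\delta\in[b/\vartheta,c/\vartheta]$ plays the role of the cut at which one switches from the linear upper sieve (producing $F_1$) to the two-dimensional one (producing $F_2$).

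First I would write
\[
\CW(\CA,u,\lambda)=\CS(\CA,X^{1/a})-\lambda\,\CS_{\widetilde{w}}(\CA)+O\bigl(X^{1-\alpha/4}\bigr),
\]
absorbing the square-divisor contribution into the error, so that $\CJ(\lambda)=\CS(\CA,z)-\lambda\CS_{\widetilde{w}}(\CA)-\lambda\,\CS(\CA(\beta)^*,X^{1/2-\eta})$ up to negligible terms. For the first summand, apply the lower-bound linear beta-sieve at level $\theta_1=\tfrac13-\rho-\varepsilon$ (Lemma \ref{p:lem4-new}) and Mertens' formula to obtain
\[
\CS(\CA,z)\geq\frac{ae^{-\gamma}\xi\pi(X)}{\log X}\,f_1(\vartheta)\bigl(1+o(1)\bigr),
\]
the error terms reducing through a Vaughan-type decomposition to the exponential-sum estimate of Corollary \ref{p:cor13-new} as long as $\alpha<\theta_1$. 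After normalisation by $\lambda\xi\pi(X)ae^{-\gamma}/\log X$ and multiplication by $2e^\gamma$ this accounts for the $\pm e^{-\gamma}f_1(\vartheta)$ pieces of $A_\delta$ and $B_\delta$.

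Next I would split $\CS_{\widetilde{w}}(\CA)$ into the four sums of Lemma \ref{pp} and bound every inner $\CS(\CA_p,\cdot)$ from above. For $p$ in the outer ranges $[X^{1/a},X^{b/a}]$ and $[X^{b/a},X^{1/u}]$ the linear upper sieve with $F_1$ suffices; integrating the resulting Mertens sums against $F_1(\vartheta(1-s))\tfrac{ds}{s}$ produces the $F_1$-integrals inside $A_\delta$ and $B_\delta$, while the residual prime-density factors combine into $\tfrac{1}{\vartheta}\log\frac{1-\delta}{\delta}$. For the Buchstab integral piece, I would subdivide at $p=X^\delta$: below $\delta$ use the linear $F_1$ bound once more, while on $[X^\delta,X^{c/\vartheta}]$ perform an extra Buchstab step to pull out a second small prime factor and apply a two-dimensional upper sieve with $F_2$, generating precisely the integrand $(c/s-\vartheta)F_2(a\theta_2-\vartheta s)$ of $\mathfrak{F}_\delta$. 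The remaining short-range piece is evaluated by iterated Buchstab coupled with the two-dimensional sieve, giving the composite $\CH(\vartheta,\vartheta\theta,\vartheta\theta)$ inside $D_\delta$. For $\CS(\CA(\beta)^*,X^{1/2-\eta})$ I would invoke Chen's switching principle as in Harman, exchanging the role of the prime $p$ with that of one of the large factors of the companion $\fN(\beta)$-number and so converting the sift into an upper-bound sift over $\CA$ with a further prime pinned to a narrow dyadic range; smooth-cutting the condition $\|(b'+na')/q\|<\xi/2$ via Lemma \ref{p:lem1-new} and handling the resulting bilinear sums by Lemmas \ref{p:lem10-new}--\ref{p:lem11-new} yields the integrals $\mathfrak{I}(\rho)$ (appearing with the coefficients $\mp\tfrac{2}{a}$ and $\pm\tfrac{2\vartheta\theta}{a}$) and the boundary factor $\mp\delta\log\frac{1-\delta}{\delta}$, assembling into $B_\delta$ and $D_\delta$. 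Collecting every term and multiplying by $2e^\gamma$ produces $\CH_\delta(\vartheta,b,c)$ exactly as in \eqref{x:10}.

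The main obstacle will be threefold. Analytically, every error term coming from the upper and lower sieves---after being convolved with the Mertens weights of Lemma \ref{pp}---must remain within the exponential-sum range $\alpha<\tfrac13-\rho$ controlled by Corollary \ref{p:cor13-new}, which is exactly what the technical hypothesis $a\geq 3c+b+1$ is there to guarantee. Structurally, the two-dimensional sieve must be applied only where $F_2$ has the explicit piecewise form recorded in Section \ref{s:3}; as the Remark warns, $6\theta-c$ can leave the interval $[\beta_2+1,\beta_2+2)$, so the split at $\delta$ has to be chosen so that the effective argument $a\theta_2-\vartheta s$ stays in the admissible range. Combinatorially, the real bookkeeping bottleneck is to verify that the several $f_1,\,F_1,\,F_2$ contributions recombine with the correct signs and coefficients into the compact shape of $\CH_\delta(\vartheta,b,c)$; any sign error in the Buchstab telescoping would collapse the improvement over Harman's bound.
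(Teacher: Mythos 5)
Your overall architecture matches the paper: lower-bound linear beta-sieve for $\CS(\CA,z)$ giving $f_1(\vartheta)$, the decomposition of $\CS_{\widetilde w}(\CA)$ from Lemma \ref{pp}, upper-bound linear sieve ($F_1$) on $[X^{1/a},X^\delta]$ and a two-dimensional sieve ($F_2$) on $[X^\delta,X^{c/\vartheta}]$, and the switching estimate for $\CS(\CA(\beta)^*,X^{1/2-\eta})$ yielding $\mathfrak{I}(\rho)$. However, there are three substantive inaccuracies that would derail a careful write-up.

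First, the two-dimensional $F_2$ bound does not arise from ``an extra Buchstab step to pull out a second small prime factor.'' In Lemma \ref{p:lem6-new} the paper enlarges $\CA_p$ to the set $\widetilde\CA$ by dropping the primality condition on the first coordinate $n$ and then sieving the product $n\round{\frac{an+b}{q}}$ for small prime divisors in both factors simultaneously; this product structure is what makes the density $g_2(d)=\prod_{p\mid d}(2/p-1/p^2)$ of dimension two. There is no Buchstab telescoping involved in producing $F_2$.

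Second, the term $\CH(\vartheta,\vartheta\theta,\vartheta\theta)$ appearing in $D_\delta(\vartheta)$ is not obtained by ``iterated Buchstab coupled with the two-dimensional sieve.'' It is a purely algebraic artifact: after one verifies that $\CH_{bc}''=\CH_{b^2}''=0$ (under $\vartheta\geq 4$, $b\geq\vartheta-3$) and $\CH_{c^2}''$ lives entirely in the $\mathfrak{F}$-part, one concludes $\CH$ is linear in $b$ and, modulo $\mathfrak{F}$, linear in $c$, so one may evaluate the constant by plugging in $b=c=\vartheta\theta$. That algebraic reorganisation via partial derivatives is the concluding step of the paper's proof and is missing from your plan entirely.

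Third, the hypothesis ``$b=1$ or $a\geq 3c+b+1$'' is not there to keep the sieve error terms inside the exponential-sum range $\alpha<1/3-\rho$; that range is governed by $\theta_1$ alone. The hypothesis is used earlier, in the weighted-sieve lemma of Section~\ref{s:2}, to force $\sum_{p\mid s}\widetilde{w_p}\geq 5c-a$ in Cases~1 and~2, so that the inequality $\CS\geq\CJ(\lambda)$ holds when $\lambda^{-1}<5c-a$. Assigning it to the error-term analysis is a misattribution that would propagate into wrong constraints if you tried to carry the plan through.
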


To this end, we need the following lemmas.
\begin{lemma}\label{p:lem4-new}
We have
$$
\CS(\CA,z)\geqslant \xi\pi(X)V(z)\big(f_1(6)+o(1)\big),\\
$$
where $V(z)=\frac{e^{-\gamma}}{\log z}\big(1+o(1)\big)$,
and $z:=X^{\frac1a}$ as mentioned before.
\end{lemma}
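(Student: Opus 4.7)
The plan is to establish the level of distribution for $\CA$ at $X^{\theta_1}$ with $\theta_1=\frac{1}{3}-\rho-\varepsilon$, and then apply the Rosser--Iwaniec linear sieve (the beta-sieve of dimension one) to $\CA$ at sifting bound $z=X^{1/a}$.

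First I would reduce $\#\CA_d$ to a Diophantine counting problem. For any $d\geq 1$ the two conditions $d\mid\round{(b'+pa')/q}$ and $\|(b'+pa')/q\|<\xi/2$ are jointly equivalent to the single fractional-part condition
\begin{equation*}
\left\|\frac{b'+pa'}{qd}\right\|<\frac{\xi}{2d},
\end{equation*}
since a number lies within $\xi/2$ of a multiple of $d$ precisely when its quotient by $d$ lies within $\xi/(2d)$ of an integer. Approximating the indicator of $(-\xi/(2d),\xi/(2d))\pmod 1$ by a smooth periodic function via Lemma \ref{p:lem1-new} with truncation level $H\asymp dY$, I get
\begin{equation*}
\#\CA_d=\frac{\xi\,\pi(X)}{d}+\sum_{1\leq|h|\leq H}c_h\sum_{p\leq X}e\!\left(\frac{h(b'+pa')}{qd}\right)+O\!\left(\frac{X}{d(\log X)^{2}}\right),
\end{equation*}
where $|c_h|\ll\xi/d$.

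Summing $|R_d|$ over $d\leq X^{\theta_1}$, pulling out $e(hb'/(qd))$ as a bounded factor and passing from primes to the von Mangoldt function via partial summation (which is exactly what forces the $\max_{N\leq X}$ in Corollary \ref{p:cor13-new}), I am reduced to
\begin{equation*}
\sum_{d\leq X^{\theta_1}}|R_d|\ll \sum_{d\leq X^{\theta_1}}\frac{\xi}{d}\max_{N\leq X}\sum_{l=1}^{dY}\left|\sum_{n\leq N}\Lambda(n)e\!\left(\frac{a'nl}{dq}\right)\right|+O(X^{1-\eta/2}).
\end{equation*}
Corollary \ref{p:cor13-new} then delivers $\sum_{d\leq X^{\theta_1}}|R_d|\ll\xi\pi(X)X^{-\eta/3}$, which is of smaller order than the expected main term $\xi\pi(X)/\log X$. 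This is the required level-of-distribution statement for $\CA$.

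Finally I would invoke the Rosser--Iwaniec linear sieve (see \cite{FI10}, Chapter 11) with $|\CA|\sim\xi\pi(X)$, density $g(d)=1/d$, level $D=X^{\theta_1}$ and sifting bound $z=X^{1/a}$. In dimension one this yields
\begin{equation*}
\CS(\CA,z)\geq \xi\pi(X)\,V(z)\bigl(f_1(s)+o(1)\bigr)-\sum_{d\leq D}|R_d|,
\end{equation*}
with $s=\log D/\log z=a\theta_1$. Specialising to the choice $s=6$ (the right endpoint of the explicit piecewise formula for $f_1$ recorded just before the lemma) and absorbing the remainder sum by the level-of-distribution estimate established above proves the claim. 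The main obstacle is really confirming that the level $\alpha=\theta_1=\frac{1}{3}-\rho-\varepsilon$ is indeed admissible in Corollary \ref{p:cor13-new}; this is precisely the content of Section \ref{error:1} and rests on the Vaughan/bilinear estimates of Lemmas \ref{p:lem9-new}--\ref{p:lem6}.
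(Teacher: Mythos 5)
Your proposal is correct and follows essentially the same route as the paper: reduce $\#\CA_d$ to the single congruence condition $\|(b'+pa')/(qd)\|<\xi/(2d)$, approximate the indicator of that short interval by a trigonometric polynomial of length $\asymp dY$, pass from primes to $\Lambda$ by partial summation (which produces the $\max_{N\le X}$), invoke the level-of-distribution estimate of Corollary~\ref{p:cor13-new}, and then apply the one-sided linear sieve at level $D=X^{\theta_1}$ and sifting bound $z=X^{1/a}$. The only cosmetic difference is that you route the indicator approximation through Lemma~\ref{p:lem1-new} (Vinogradov-style smoothing), whereas the paper uses Lemma~\ref{5} (Montgomery's finite Fourier bound); since $M\asymp dX^\eta/\xi\asymp dY$ these produce the same truncation length, and both give coefficients $\ll\xi/d$, so the downstream analysis is identical.
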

\begin{proof}
Take $M\asymp\frac{dX^{\eta}}{\xi}$ in Lemma \ref{5} then we have
\begin{align*}
\#\CA_d&=\sum_{\substack{p\leqslant X\\
d\mid\round{\frac{ap+b}q}\\
\left\|\frac{ap+b}q\right\|<\frac\xi2}}1
=\sum_{\substack{p\leqslant X\\
\left\|\frac{ap+b}{dq}\right\|<\frac\xi{2d}}}1
=\sum_{p\leqslant X}\chi\left(\frac{ap+b}{dq}\right)
\displaybreak[0]\\
&=\frac{\pi(x)\xi}d+E(\CA_d)+O\left(\frac{{\xi\pi(X)}X^{-\eta}}d\right),\\
\end{align*}
where
\begin{align*}
\sum_{p\leqslant X}\sum_{1\leq|l|\leq M}
a_le\left(\frac{(ap+b)l}{qd}\right)\leq E(\CA_d)\leq\sum_{p\leqslant X}\sum_{1\leq|l|\leq M}
b_le\left(\frac{(ap+b)l}{qd}\right)\\
\end{align*}
with $
|a_l|+|b_l|\ll\frac\xi d,\quad \forall 1\leq|l|\leq M.
$

Therefore, by partial summation we have\begin{align*}
E(\CA_d)&\ll\max_{N\leqslant X}\frac1 {\log X}\sum_{1\leq|l|\leq M}\left(|a_l|+|b_l|\right)\left|\sum_{n\leqslant N}\Lambda(n)e\left(\frac{anl}{qd}\right)
\right|\\
&\ll\max_{N\leqslant X}\frac\xi d\sum_{l=1}^{dY}\left|
\sum_{n\leqslant N}\Lambda(n)e\left(\frac{anl}{qd}\right)
\right|.\\
\end{align*}
Hence the density function of sequence $\CA$ is $g_1(d)=\frac1d$;
and thus, by Jurkat-Richert's theorem, we obtain
\begin{align*}
\CS(\CA,z)&\geqslant \xi\pi(X)V(z)\left(f_1(4)+O\left(\left(\log X\right)^{-\frac 1{6}}\right)\right)\\
&\qquad+O\left(\xi\pi(X)X^{-\eta}+
\sum_{d\leqslant X^\alpha}\frac\xi d\max_{N\leqslant X}
\sum_{i=1}^{dY}\left|\sum_{n\leqslant N}\Lambda(n)e\left(\frac{anl}{dq}\right)
\right|\right).
\end{align*}

Then this lemma comes from corollary \ref{p:cor13-new} since $f_1(6)>0.$
\end{proof}

\begin{lemma}\label{p:lem5-new}
If $0<\frac1a<\delta'<\frac ca\leq \theta_1$, let $w=X^{\delta'}$, then
$$
\sum_{z\leqslant p<w}w_p\CS(\CA_p,z)\leqslant{\xi\pi(X)}V(z)\left(
\int_{\frac1a}^{\delta'}\left(\frac1s-u\right)
F_1\left(\frac{4(\alpha-s)}\alpha\right)\mathrm ds+o(1)\right).
$$
\end{lemma}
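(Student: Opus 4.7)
The plan is to apply the Jurkat--Richert upper bound sieve (linear case, $\kappa=1$) to each $\CS(\CA_p, z)$ for primes $p$ in $[z,w)$ individually, then sum the resulting estimates over $p$ using the prime number theorem and partial summation.

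First I would record the level of distribution for each shifted sequence $\CA_p$. Writing $p = X^{\delta_p}$ with $\delta_p\in[1/a,\delta')$, the level of distribution $X^{\theta_1-\varepsilon}$ for $\CA$ established in Lemma~\ref{p:lem4-new} via Corollary~\ref{p:cor13-new} passes to level $X^{\theta_1-\delta_p-\varepsilon}$ for $\CA_p$, since any modulus $d$ appearing in the sieve remainder for $\CA_p$ corresponds via $n = pd$ to a modulus $n \leq X^{\theta_1-\varepsilon}$ for $\CA$. The density function of $\CA_p$ remains $g(d) = 1/d$ on squarefrees coprime to $p$, so Jurkat--Richert gives
\[
\CS(\CA_p, z) \leq \frac{\xi\pi(X)}{p}\, V(z)\left(F_1\!\left(\tfrac{4(\alpha-\delta_p)}{\alpha}\right) + O\!\left((\log X)^{-1/6}\right)\right) + R'_p,
\]
where $R'_p := \sum_{d \leq X^{\theta_1-\delta_p-\varepsilon}}|R_{pd}(\CA)|$ and where the sifting parameter $4(\alpha-\delta_p)/\alpha = a(\theta_1-\delta_p)$ follows from the identifications $\alpha = 4/a$ and $a\theta_1 = 4$ built into the paper's normalization.

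Next I would multiply by $w_p = 1 - u\delta_p$ and sum over primes $p$ with $z \leq p < w$. By partial summation using the prime number theorem, for any continuous $\Phi$ on $[1/a,\delta']$ one has
\[
\sum_{z \leq p < w}\frac{\Phi(\delta_p)}{p} = \int_{1/a}^{\delta'}\frac{\Phi(s)}{s}\,ds + o(1),
\]
which applied with $\Phi(s) = (1 - us)F_1(4(\alpha-s)/\alpha)$ yields the main term $\xi\pi(X)V(z)\int_{1/a}^{\delta'}(1/s - u)F_1(4(\alpha-s)/\alpha)\,ds$ after multiplication by $\xi\pi(X)V(z)$. Summing the remainder $R'_p$ over $p$ and swapping the order of summation via $n = pd$, one obtains a quantity bounded by $\sum_{n \leq X^{\theta_1-\varepsilon}}|R_n(\CA)|$, which is $o(\xi\pi(X)V(z))$ by Corollary~\ref{p:cor13-new}.

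The step I expect to require the most care is ensuring uniformity of the Jurkat--Richert error across the one-parameter family $\{\CA_p\}_{z\leq p<w}$ and rearranging the remainder sums into a single level-of-distribution sum for $\CA$ without incurring logarithmic losses beyond what Corollary~\ref{p:cor13-new} can absorb; once these points are in hand, the conversion of the discrete prime sum into the Riemann integral via partial summation is routine, and the sifting parameter never leaves the range $[4-c,3]$ on which $F_1$ has its simple form $A_1/s$, so no boundary issues arise.
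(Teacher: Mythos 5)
Your proposal follows exactly the paper's own argument: apply the Jurkat--Richert upper bound to each $\CA_p$ with level $X^{\theta_1}/p$ and sifting parameter $s_p=\log(X^{\theta_1}/p)/\log z$, fold the remainders back into a single sum over $d\le X^{\theta_1}$ controlled by Corollary~\ref{p:cor13-new}, and convert the prime sum into the stated integral via Mertens/partial summation. The only inexact remark is your aside that the sifting parameter identification ``follows from $a\theta_1=4$'': the paper in fact works with $\vartheta=a\theta_1$ generically (taking $\vartheta\approx 4.07$ later), and the appearance of $F_1(4(\alpha-s)/\alpha)$ in the lemma statement versus $F_1(a(\theta_1-s))$ in the proof is a residual notational inconsistency carried over from \cite{1:Har84} (where $z=X^{\alpha/4}$), not a normalization you should assume; similarly the claim that the argument of $F_1$ stays in $[4-c,3]$ is not needed and not quite right once $\vartheta>4$. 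Neither point affects the structure of the argument, which matches the paper's.
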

\begin{proof}Corollary \ref{p:cor13-new} shows that the level of distribution
of $\CA$ is $X^{\theta_1}$. Hence
by Jurkat-Richert's theorem, we have
$$
\CS(\CA_p,z)\leqslant\frac{{\xi\pi(X)}V(z)}p\left(F_1(s_p)
+O\left((\log\frac{X^\alpha}p)^{-\frac16}\right)\right)
+O\left(\sum_{d\leqslant X^{\theta_1}/p}|R_{pd}|\right),
$$
where
$$
s_p=\frac{\log\frac{X^{\theta_1}}{p}}{\log z},\text{ and }
R_{pd}\ll\frac\xi{pd}\max_{N\leqslant X}
\sum_{l=1}^{pdY}\left|\sum_{n\leqslant N}\Lambda(n)e\left(\frac{anl}{pdq}\right)\right|.
$$
Since
\begin{align*}
\sum_{z\leqslant p<w}w_p\sum_{d\leqslant X^{\theta_1}/p}|R_{pd}|
&\ll\sum_{z\leqslant p<w}\sum_{d\leqslant X^{\theta_1}/p}|R_{pd}|\\
&\ll\sum_{d\leqslant X^{\theta_1}}|R_{d}|\sum_{\substack{p\mid d\\z\leqslant p<w}}1\\
&\ll X^{\frac\eta2}\sum_{d\leqslant X^{\theta_1}}|R_{d}| \\
&\ll X^{\frac\eta2}\sum_{d\leqslant X^{\theta_1}}\frac\xi d
\max_{N\leqslant X}\sum_{l=1}^{dY}
\left|\sum_{n\leqslant N}\Lambda(n)
e\left(\frac{anl}{dq}\right)\right|
\ll X^{1-\frac\eta2}\xi,
\end{align*}
we obtain
\begin{align*}
\sum_{z\leqslant p<w}w_p\CS(\CA_p,z)&\leqslant{\xi\pi(X)}V(z)\left(\sum_{z\leqslant p<w}\frac{w_p}pF_1(s_p)+o(1)\right).
\end{align*}

By Mertens formula we have
\begin{align*}
\sum_{t'\leqslant p<t}\frac 1p(1-\frac{\log p}{\log y})
=\log\frac{\log t}{\log t'}-\frac{\log t-\log t'}{\log y}+R(t',t),
\displaybreak[0]
\end{align*}
where
\begin{align*}
R(t',t)&\ll\frac1{t'}, \qquad\text{for any  }t'<t.
\end{align*}

Notice that $F_1$ is bounded and decreasing, so we obtain that
\begin{align*}
\sum_{z\leqslant p<w}\frac{w_p}pF_1(s_p)
&=\int_z^wF_1\left(\frac{\log X^{\theta_1}/t}{\log z}\right)
\mathrm d\sum_{z\leqslant p<t}\frac 1p(1-\frac{\log p}{\log y})\\
&=\int_z^wF_1\left(\frac{\log X^{\theta_1}/t}{\log z}\right)\mathrm d\left(\log\frac{\log t}{\log z}-\frac{\log t-\log z}{\log y}+R(z,t)\right)
\displaybreak[0]\\
&=\int_z^w\frac1t\left(\frac1{\log t}-\frac1{\log y}\right)F_1\left(\frac{\log X^\alpha/t}{\log z}\right)
\mathrm dt+ R(z,t)F_1\left(\frac{\log X^{\theta_1}/t}{\log z}\right)\bigg|_z^w\\
&\qquad{}+O\left(\int_z^w\frac1z\mathrm dF_1\left(\frac{\log X^{\theta_1}/t}{\log z}\right))\right)
\displaybreak[0]\\
&=\int_{\frac1a}^{\delta'}\left(\frac1s-u\right)
F_1\left(a(\theta_1-s)\right)\mathrm ds+O(\frac1z).
\end{align*}

Therefore, we have
$$
\sum_{z\leqslant p<w}w_p\CS(\CA_p,z)\leqslant{\xi\pi(X)}V(z)\left(
\int_{\frac1a}^{\delta'}\left(\frac1s-u\right)
F_1\left(a(\theta_1-s)\right)\mathrm ds+o(1)\right).
$$

This completes the proof.
\end{proof}
Define
$$
\widetilde\CA:=\left\{n\round{\frac{an+b}q}:n\in[z,X],
p\mid\round{\frac{an+b}q},\left\|\frac{an+b}q\right\|<\frac12\xi\right\},
$$
then we have the following auxiliary lemma.
\begin{lemma}
For $d\mid P(z)$ and $p\geq z$, we have
\begin{align*}
\#\widetilde\CA_d
=\frac{X\xi}pg_2(d)+E(X;p,d),
\end{align*}
where
$$
g_2(d):=\prod_{p\mid d}\left(\frac2{p}-\frac1{p^2}\right),
$$
$$
E(X;p,d)\ll q\xi\tau(d)+\frac X{pdq}\sum_{d=d_1d_2}
(q,d_1)(a,pd_2).
$$
\end{lemma}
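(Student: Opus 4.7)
My plan is to evaluate $\#\widetilde{\CA}_d$ by an explicit lattice-point count. First write $m:=\round{\frac{an+b}{q}}$ so that the sieve element $n\round{\frac{an+b}{q}}$ becomes the product $nm$, and the condition $\|\frac{an+b}{q}\|<\xi/2$ is equivalent to the linear inequality $|an+b-qm|<q\xi/2$. Since $d \mid P(z)$ and $p\geq z$ we have $(d,p)=1$ and $d$ squarefree. The divisibility $d\mid nm$ corresponds to a unique factorization $d=d_1d_2$ with $d_1=(n,d)$, giving $d_1\mid n$, $d_2\mid m$ and $(n,d_2)=1$. M\"obius inversion on the last coprimality produces
\begin{equation*}
\#\widetilde{\CA}_d=\sum_{d_1d_2=d}\sum_{e\mid d_2}\mu(e)\,T(d_1e,d_2),
\end{equation*}
where $T(\alpha,\beta)$ counts pairs $(n,m)$ with $n\in[z,X]$, $\alpha\mid n$, $\beta p\mid m$ (using $(d,p)=1$), and $|an+b-qm|<q\xi/2$.

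Next I would reduce $T(\alpha,\beta)$ to a linear Diophantine count. Substituting $n=\alpha n_1$, $m=\beta pm_1$ recasts the inequality as $a\alpha n_1-q\beta pm_1\in(-b-q\xi/2,\,-b+q\xi/2)$ with $n_1\in[z/\alpha,X/\alpha]$. Let $g:=\gcd(a\alpha,q\beta p)$. For each integer $c$ in the target interval the equation $a\alpha n_1-q\beta pm_1=c$ is solvable iff $g\mid c$, and its integer solutions form a coset of the lattice generated by $(q\beta p/g,\,a\alpha/g)$. The number of valid $c$ is $q\xi/g+O(1)$ and the number of $n_1$ in range per valid $c$ is $gX/(\alpha q\beta p)+O(1)$, so multiplication gives
\begin{equation*}
T(\alpha,\beta)=\frac{X\xi}{\alpha\beta p}+O\!\left(\frac{q\xi}{g}+\frac{gX}{\alpha q\beta p}\right).
\end{equation*}

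For the main term, the identity
\begin{equation*}
\sum_{d_1d_2=d}\sum_{e\mid d_2}\frac{\mu(e)}{d_1e\cdot d_2}
=\frac{1}{d}\sum_{d_1d_2=d}\frac{\phi(d_2)}{d_2}
=\frac{1}{d}\prod_{\ell\mid d}\!\Bigl(2-\tfrac{1}{\ell}\Bigr)
\end{equation*}
assembles the advertised $g_2(d)X\xi/p$. For the error, I would invoke $(a,q)=1$ together with the coprimalities $(d_1,d_2)=1$, $(d,p)=1$ and $e\mid d_2$ to factor $g$ as essentially $g=\gcd(a,pd_2)\cdot\gcd(q,d_1e)\cdot e^{\pm}$, so that the residual $e$-dependence telescopes under the M\"obius summation. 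The two pieces of the error in $T$ then aggregate, after summing over $d_1,d_2,e$, to the stated $q\xi\tau(d)+\frac{X}{pdq}\sum_{d=d_1d_2}(q,d_1)(a,pd_2)$.

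\textbf{Main obstacle.} The lattice-point count and the identity for the main term are essentially routine; the delicate step is precisely tracking how $g=\gcd(a\alpha,q\beta p)$ depends on $e$ and verifying that the M\"obius sum over $e$ collapses this dependence to leave exactly the factors $(q,d_1)$ and $(a,pd_2)$. A crude bound on $g$ would introduce spurious factors such as $\sigma_{-1}(d_2)$ or $3^{\omega(d)}$; obtaining the clean form in the statement requires a careful bookkeeping of the arithmetic of $a\alpha n_1-q\beta pm_1$ prime by prime.
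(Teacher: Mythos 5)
Your plan is close to the paper's own: both reduce $\#\widetilde\CA_d$ to a pair count over $(n,m)$ with $m=\round{(an+b)/q}$, factor $d=d_1d_2$ via $d_1=(n,d)$, and then do a lattice-point count whose error is governed by $\gcd(ad_1,pd_2q)$, together with the observation $(ad_1,pd_2q)\leq(q,d_1)(a,pd_2)$. The one place you diverge is the interposed M\"obius sum $\sum_{e\mid d_2}\mu(e)$ to detect $(n,d_2)=1$; the paper instead folds that coprimality directly into the residue count and obtains the $\varphi(d_2)/d_2$ factor without introducing an auxiliary variable. That difference matters precisely where you flag it, but not in the way you anticipate. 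The gcd bookkeeping you call the ``main obstacle'' is in fact routine: since $(a,q)=1$, $(d,p)=1$, $d$ is squarefree and $e\mid d_2$, one has
$\gcd(ad_1e,\,pd_2q)\leq\gcd(a,pd_2q)\gcd(d_1,pd_2q)\gcd(e,pd_2q)= (a,pd_2)\,(q,d_1)\,e$.
The genuine issue is the claim that ``the $e$-dependence telescopes under the M\"obius summation'': that is true only for the main term, where the signs $\mu(e)$ are actually used. In the error term you take absolute values, so no cancellation is available, and you are left with $\sum_{d_1d_2=d}\sum_{e\mid d_2}q\xi\asymp q\xi\sum_{d_1d_2=d}\tau(d_2)$ rather than $q\xi\tau(d)$, and similarly the second piece becomes $\frac{X}{pdq}\sum_{d_1d_2=d}\tau(d_2)(a,pd_2)(q,d_1)$. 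So your setup does \emph{not} deliver the lemma's error bound verbatim; it delivers it with extra divisor factors $\tau(d_2)$. These are $\ll X^{\varepsilon}$ and are harmless wherever the lemma is used (the applications already allow an $X^{\varepsilon}$ loss), but you should either note this degradation explicitly or drop the M\"obius step and count residues coprime to $d_2$ directly, as the paper does.
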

\begin{proof}
Define $$J:=\{j:|j+b|\leqslant\frac12q\xi\}.$$
\begin{align*}
\#\widetilde\CA_d
&=\sum_{|j|\leqslant\frac12q\xi}\sum_{\substack{n\in[z,X]\\
an+b\equiv j\pmod{pq}\\
n(an+b)\equiv jn\pmod{dq}}}1
=\sum_{j\in J}\sum_{\substack{n\in[z,X]\\
an\equiv j\pmod{pq}\\
an^2\equiv jn\pmod{dq}}}1
\displaybreak[0]\\
&=\sum_{j\in J}\sum_{\substack{n\in[z,X]\\
an-j\equiv 0\pmod{pq}\\
n(an-j)\equiv 0\pmod{dq}}}1
=\sum_{j\in J}\sum_{d=d_1d_2}\sum_{\substack{n\in[z,X]\\
an-j\equiv 0\pmod{pq}\\
n(an-j)\equiv 0\pmod{dq}\\
(n,d)=d_1}}1
\displaybreak[0]\\
&=\sum_{d=d_1d_2}\sum_{j\in J}\sum_{\substack{n\in[z/d_1,X/d_1]\\
ad_1n-j\equiv 0\pmod{pq}\\
n(ad_1n-j)\equiv 0\pmod{d_2q}\\
(n,d_2)=1}}1
=\sum_{d=d_1d_2}\sum_{j\in J}\sum_{\substack{n\in[z/d_1,X/d_1]\\
ad_1n-j\equiv 0\pmod{pd_2q}\\
(n,d_2)=1}}1
\displaybreak[0]\\
&=\sum_{d=d_1d_2}\sum_{\substack{j\in J\\
(ad_1,pd_2q)\mid j}}\left(\frac{\varphi(d_2)}{d_2}\cdot
\frac{X-z}{pdq}(ad_1,pd_2q)+O(1)\right)
\displaybreak[0]\\
&=\sum_{d=d_1d_2}\left(\frac{q\xi}{(ad_1,pd_2q)}+O(1)\right)
\left(\frac{\varphi(d_2)}{d_2}\cdot
\frac{X-z}{pdq}(ad_1,pd_2q)+O(1)\right)
\displaybreak[0]\\
&=\sum_{d=d_1d_2}\frac{\varphi(d_2)}{d_2}\cdot\frac{(X-z)\xi}{pd}
+O\left(\sum_{d=d_1d_2}\frac{q\xi}{(ad_1,pd_2q)}
+\sum_{d=d_1d_2}\frac{X\cdot(ad_1,pd_2q)}{pdq}\right),
\displaybreak[0]\\
\end{align*}
and thus lemma follows by noting that
 $(a,pd_2q)(d_1,pd_2q)\leq(q,d_1)(a,pd_2).$
\end{proof}
Hence $\widetilde\CA$ has a density function $g_2(d)$ with
\begin{align*}
V_2(z)&:=\prod_{p\leqslant z}
\big(1-g_2(p)\big)
=\prod_{p\leqslant z}
\left(1-\frac2{p}+\frac1{p^2}\right) \\
&=\frac{e^{-2\gamma}}{\log^2z}\big(1+o(1)\big)
\qquad\text{by Mertens estimate}.
\end{align*}

We will use Beta-Sieve theory to $\widetilde\CA$ to obtain an upper bound with a larger exponent of level of distribution. To this end, we shall compute its dimension as follows:
\begin{align*}
\sum_{p\leq v}g_2(p)\log p=2\sum_{p\leq v}\big(\frac{\log p}{p}-\frac{\log p}{2p^2}\big)=2\log v+O(1),
\qquad\text{for any }v\geq2.
\end{align*}

Therefore, the sieve dimension is 2. Denote by $\theta_2$ the exponent of level of distribution of $\widetilde\CA$.
\begin{lemma}\label{p:lem6-new}
Assuming $w\leqslant p\leqslant y$ and $p$ is a prime number,
where $w=X^{\delta'}$, $\frac1a\leqslant\delta\leqslant\frac ca\leq\theta_1$,
then we have
$$
\CS(\CA_p,z)\leqslant\frac{X\xi}pV_2(z)
\left(F_2(s_p')+O\left((\log X)^{-\frac16}\right)\right)
+\sum_{d\leq\frac{X^{\theta_2}}{p}}E(X;p,d),
$$
where
$$
s_p':=\frac{\log\left(X^{\theta_2}/p\right)}{\log z}.
$$

\end{lemma}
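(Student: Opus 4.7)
The strategy is a two-step reduction: a \emph{switching principle} that replaces sifting $\CA_p$ by sifting the bilinear set $\widetilde\CA$, followed by the Jurkat--Richert upper bound in dimension~2. The reason this is better than handling $\CA_p$ directly as a linear sieve problem is that $\widetilde\CA$ is a dimension-2 sieve (by the Mertens-type identity $\sum_{p\le v}g_2(p)\log p=2\log v+O(1)$ established above), and its bilinear structure should admit a substantially larger level of distribution $X^{\theta_2}$ than the level $\theta_1=\tfrac13-\rho-\varepsilon$ available for $\CA$ itself.

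For the switching step, any $m\in\CA_p$ contributing to $\CS(\CA_p,z)$ arises from some prime $\pi\le X$ with $m=\round{(a\pi+b)/q}$, $p\mid m$, and $\|(a\pi+b)/q\|<\xi/2$. The contribution of primes $\pi<z$ is trivially $\ll z/\log z$, which we will absorb into the error. For $\pi\ge z$, setting $n=\pi$ in the defining condition of $\widetilde\CA$ exhibits $N:=\pi m$ as an element of the multiset $\widetilde\CA$, and $(N,P(z))=1$ because $\pi$ is a prime at least $z$ and $(m,P(z))=1$ by assumption. Counted as a multiset, this gives the first reduction
\[
\CS(\CA_p,z)\ \le\ \CS(\widetilde\CA,z)\ +\ O(z/\log z).
\]

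For the sieve step, one feeds the preceding lemma's expansion $\#\widetilde\CA_d=(X\xi/p)\,g_2(d)+E(X;p,d)$ into Jurkat--Richert's dimension-2 upper bound. Choosing $\theta_2$ so that $\sum_{d\le X^{\theta_2}/p}|E(X;p,d)|$ is admissible --- the factor $1/p$ is natural since the main term itself is of size $X\xi/p$ --- the $\beta$-sieve yields
\[
\CS(\widetilde\CA,z)\ \le\ \frac{X\xi}{p}\,V_2(z)\Bigl(F_2(s_p')+O\bigl((\log X)^{-1/6}\bigr)\Bigr)\ +\ \sum_{d\le X^{\theta_2}/p}|E(X;p,d)|,
\]
with $s_p'=\log(X^{\theta_2}/p)/\log z$. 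Combining with the switching inequality and absorbing the $O(z/\log z)$ error into the remainder produces the stated bound.

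The principal obstacle is verifying admissibility of $\sum_d|E(X;p,d)|$ at level $X^{\theta_2}/p$ for a useful $\theta_2$. Unlike the linear congruence $an\equiv j\pmod{dq}$ handled by the exponential-sum estimates of Section~\ref{error:1}, here $E(X;p,d)$ comes from the quadratic congruence $n(an-j)\equiv 0\pmod{dq}$, so the bound $E(X;p,d)\ll q\xi\tau(d)+(X/pdq)\sum_{d=d_1d_2}(q,d_1)(a,pd_2)$ must be controlled by adapting the Vaughan-identity plus Weyl-differencing machinery (analogues of Lemmas~\ref{p:lem9-new}--\ref{p:lem11-new}) to this bilinear splitting $d=d_1d_2$, while keeping track of the extra divisor $p$. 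This is the delicate step that pins down how large $\theta_2$ can ultimately be taken.
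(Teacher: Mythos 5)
Your proposal follows the paper's proof in all essentials: the switching principle replacing $\CS(\CA_p,z)$ by $\CS(\widetilde\CA,z)$ at the cost of an error coming from primes $\pi<z$, the observation that $(\pi,P(z))=1$ for primes $\pi\geq z$ so that $(\pi m,P(z))=1$ is equivalent to $(m,P(z))=1$ on this range, and the application of the two-dimensional $\beta$-sieve (Jurkat--Richert form) to $\widetilde\CA$ using the density decomposition $\#\widetilde\CA_d=(X\xi/p)\,g_2(d)+E(X;p,d)$ from the preceding lemma. The paper records the small-$\pi$ error as $O(\xi\pi(z))$ versus your trivial $O(z/\log z)$; both are swallowed into the $O((\log X)^{-1/6})$ error factor once one checks $\pi(z)\ll X\xi V_2(z)(\log X)^{-1/6}$, which holds comfortably since $z=X^{1/a}$ with $a\geq 12$.

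One correction to your closing paragraph: establishing admissibility of $\sum_{d\leq X^{\theta_2}/p}E(X;p,d)$ is deferred to the next lemma (Lemma~\ref{p:lem7-new}), so it is not part of the burden of Lemma~\ref{p:lem6-new}; and more importantly, when the paper does handle that sum it does \emph{not} adapt the Vaughan-identity/Weyl machinery of Section~\ref{error:1}. Because $E(X;p,d)$ arises from elementary counting of lattice points in arithmetic progressions via CRT rather than from truncated Fourier expansions of the indicator of $\|{\cdot}\|<\xi/2$, the relevant bound $E(X;p,d)\ll q\xi\tau(d)+(X/pdq)\sum_{d=d_1d_2}(q,d_1)(a,pd_2)$ is controlled over $d\leq X^{\theta_2}/p$ by the elementary divisor-sum estimate $\sum_{d\leq B}(k,d)/d\ll\tau(k)\log B$ (equation \eqref{eqn:4}), and the resulting constraint on $\theta_2$ comes solely from the size of $q\asymp X^{1/3+\rho+\eta}$ balancing $q\xi X^{\theta_2+\varepsilon}$ against $X^{1+\varepsilon}/q$. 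No new type-I/type-II exponential sum estimates are needed for $\widetilde\CA$.
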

\begin{proof}
We have
\begin{align*}
\CS(\CA_p,z)&
=\sum_{\substack{n\in\CA_p\\
\left(n,P(z)\right)=1}}1
\displaybreak[0]\\
&=\#\{{p':z\leqslant p'\leqslant X,
p\mid\round{\frac{ap'+b}q},
\left\|\frac{ap'+b}q\right\|<\frac12\xi,
\left(p'\round{\frac{ap'+b}q},P(z)\right)=1}\}\\
&\qquad+\#\{{p':p'<z,
p\mid\round{\frac{ap'+b}q},
\left\|\frac{ap'+b}q\right\|<\frac12\xi,
\left(p'\round{\frac{ap'+b}q},P(z)\right)=1}\}
\displaybreak[0]\\
&\leqslant\#\{n:z\leqslant n\leqslant X,
p\mid\round{\frac{an+b}q},
\left\|\frac{an+b}q\right\|<\frac12\xi,
\left(n\round{\frac{an+b}q},P(z)\right)=1\}\\
&\qquad+O\big(\xi\pi(z)\big)\\
&=\CS(\widetilde\CA,z)+O\big(\xi\pi(z)\big),
\end{align*}

We now meet a sifting problem of dimension two. By Beta-Sieve theory we have
\begin{align*}
\CS(\CA_p,z)&\leqslant\CS(\widetilde\CA,z)+O\big(\xi\pi(z)\big)
\displaybreak[0]\\
&\leqslant\frac{X\xi}pV_2(z)\left(F_2(s_p')+O\left(
\left(\log\frac{X^{\theta_2}}p\right)^{-\frac16}\right)
\right)+\sum_{d\leqslant X^{\theta_2}/p}E(X;p,d)\\
&\qquad+O\big(\xi\pi(z)\big)
\displaybreak[0]\\
&=\frac{X\xi}pV_2(z)\left(F_2(s_p')+O\left(
(\log X)^{-\frac16}\right)
\right)+\sum_{d\leqslant X^{\theta_2}/p}E(X;p,d)
\displaybreak[0]\\
\end{align*}
and the last inequality holds because
$$
\xi\pi(z)\ll X\xi V_2(z)(\log X)^{-\frac16}.
$$

This completes the proof.
\end{proof}

\begin{lemma}\label{p:lem7-new}
If $\frac1a\leqslant\delta'\leqslant\frac ca\leq\theta_1$, let $w=X^{\delta'}$, then
\begin{align*}
\sum_{w\leqslant p\leq y}w_p\CS(\CA_p,z)
&\leqslant{\xi\pi(X)}
V(z)\left(ae^{-\gamma}\alpha
\int_{\delta'}^{\frac ca}\left(\frac1s-u\right)
F_2\left(a(\theta_2-s)\right)\mathrm ds + o(1)\right)\\
&\qquad +O\left(q\xi X^{\theta_2+\varepsilon}+\frac {X^{1+\varepsilon}}q\right).
\end{align*}
\end{lemma}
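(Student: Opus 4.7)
The plan is to apply Lemma \ref{p:lem6-new} term-by-term in the sum $\sum_{w \le p \le y} w_p \CS(\CA_p, z)$, producing a main term $\sum w_p (X\xi/p) V_2(z) F_2(s_p')$, a secondary $O((\log X)^{-1/6})$ relative loss, and an arithmetic remainder $\sum_{w \le p \le y} w_p \sum_{d \le X^{\theta_2}/p} E(X;p,d)$. The first two pieces will be handled by partial summation together with Mertens' estimates, and the last by the explicit bound for $E(X;p,d)$ coming from the auxiliary lemma preceding Lemma \ref{p:lem6-new}.

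For the main term I would mimic the proof of Lemma \ref{p:lem5-new}. Writing $s_p' = (\theta_2 \log X - \log p)/\log z$, partial summation against
$$\sum_{w \le p \le t} \frac{1}{p}\left(1-\frac{\log p}{\log y}\right) = \log\frac{\log t}{\log w} - \frac{\log t - \log w}{\log y} + R(w,t),\qquad R(w,t)\ll \frac{1}{w},$$
yields
$$\sum_{w \le p \le y} \frac{w_p}{p}F_2(s_p') = \int_{\delta'}^{c/a}\left(\frac{1}{s}-u\right)F_2\bigl(a(\theta_2-s)\bigr)\,ds + O(1/w),$$
using that $F_2$ is bounded and piecewise smooth. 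To match the form stated in the lemma I would then rewrite $V_2(z)\log z = e^{-\gamma} V(z)\log z\,(1+o(1))$ via Mertens and use $\pi(X)\sim X/\log X$, so that $(X\xi/p)\,V_2(z) = \xi\pi(X)\,V(z)\cdot (a e^{-\gamma}/p)(1+o(1))$. The relative $(\log X)^{-1/6}$ factor from Lemma \ref{p:lem6-new} integrates harmlessly into the $o(1)$ piece.

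For the arithmetic remainder I split the bound $E(X;p,d) \ll q\xi\tau(d) + (X/(pdq))\sum_{d=d_1d_2}(q,d_1)(a,pd_2)$ into its two summands. The first contributes
$$\sum_{w\le p\le y}\sum_{d\le X^{\theta_2}/p} q\xi\tau(d) \ll q\xi X^{\theta_2+\varepsilon}$$
by the standard estimate $\sum_{d\le N}\tau(d)\ll N\log N$, giving the first stated error. For the second, swapping the order of summation to the form $\sum_{d_1}\sum_{d_2}\sum_p$ and exploiting $(a,q)=1$ together with $a\asymp q$, the familiar divisor-sum averages $\sum_{d_1\le N_1}(q,d_1)/d_1 \ll \tau(q)(\log X)^2$ and the analogous estimates for $\sum (a,pd_2)/(pd_2)$ reduce this piece to $O(X^{1+\varepsilon}/q)$, matching the second stated error.

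I expect the main technical obstacle to be precisely this second piece of the remainder: because $p$ appears inside the gcd $(a,pd_2)$, one cannot cleanly detach the sum over $p$. The natural fix is to split according to whether $p\mid a$ (contributing at most one prime $\ge w$, so negligible) or $p\nmid a$ (in which case $(a,pd_2)=(a,d_2)$ decouples $p$ from $d_2$), and then to execute the divisor-sum averages. Everything else is a direct transfer of the argument of Lemma \ref{p:lem5-new} from $(F_1,V,\theta_1)$ to $(F_2,V_2,\theta_2)$, so once Lemma \ref{p:lem6-new} and the $E(X;p,d)$ estimate are in hand the structural content is largely routine.
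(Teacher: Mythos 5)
Your proposal follows the paper's proof of this lemma closely: Lemma \ref{p:lem6-new} applied for each prime $p$, partial summation against the Mertens expansion to evaluate $\sum_{w\le p\le y}\frac{w_p}{p}F_2(s_p')$, and the divisor-sum estimate \eqref{eqn:4} for the arithmetic remainder. What you add is a genuinely useful clarification of that remainder step: the paper closes by simply invoking \eqref{eqn:4}, which elides the fact that $(a,pd_2)$ entangles $p$ with $d_2$; the cheapest bound $(a,pd_2)\le p\,(a,d_2)$ cancels the crucial $1/p$ weight, leaving $\sum_{w\le p\le y}1\gg X^{c/a}/\log X$, far too large for the claimed $O(X^{1+\varepsilon}/q)$. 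Your split into $p\nmid a$ (where $(a,pd_2)=(a,d_2)$, the $1/p$ survives, and $\sum_{w\le p\le y}1/p\ll 1$) and $p\mid a$ (where there are only $O(1)$ primes $\ge w$ since $a\ll q\ll X^{1/3+\rho}$ and $w\ge X^{1/a}$) is the right repair and matches the intent of \eqref{eqn:4}. One small correction: the number of primes $p\ge w$ dividing $a$ is $O(1)$, not necessarily one, because $w$ may be as small as $X^{1/a}$ while $a\asymp q$; this does not change the conclusion.
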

\begin{proof}
From lemma \ref{p:lem6-new} we obtain
\begin{align*}
\sum_{w\leqslant p\leq y}w_p\CS(\CA_p,z)&\leqslant X\xi V_2(z)
\left(\sum_{w\leqslant p\leq y}\frac{w_p}pF_2(s_p')
+O\left((\log X)^{-\frac16}\sum_{w\leqslant p\leq y}\frac1p\right)\right) \\
&\qquad+E_\CA(X;w,y)\\
&=X\xi V_2(z)\left(\sum_{w\leqslant p\leq y}\frac{w_p}pF_2(s_p')+o(1)\right)+E_\CA(X;w,y),\\
\end{align*}
where
\begin{align*}
E_\CA(X;w,y)&:=\sum_{w\leqslant p\leq y}w_p\sum_{d\leq\frac{X^{\theta_2}}{p}}E(X;p,d).
\end{align*}

Use the same method in Lemma \ref{p:lem5-new} to handle $\sum_{w\leqslant p\leq y}\frac{w_p}pF_2(s_p')$ and we obtain that
\begin{align*}
\sum_{w\leqslant p\leq y}\frac{w_p}pF_2(s_p')
&=\int_\delta^\frac1u\left(\frac1s-u\right)
F_2\left(\frac{4(\theta_2-s)}\alpha\right)\mathrm ds+o(1).
\end{align*}

As for $E_\CA(X;w,y)$, noting that for any $0<\theta_2<1$, and
for any $1\leqslant B\leqslant X^{\theta_2}/p$, we have
$$
\sum_{d\sim B}(k,d)=\sum_{c\mid k}\sum_{\substack{d\sim B\\
c=(k,d)}}c
=\sum_{c\mid k}c\sum_{\substack{d\sim Bc^{-1}\\
\left(d,kc^{-1}\right)=1}}1
\ll\sum_{c\mid k}B=B\tau(k),
$$
by Abel transformation,
$$
\sum_{d\sim B}\frac{(k,d)}d\ll\tau(k),
$$
which illustrates
\begin{equation}\label{eqn:4}
\sum_{d\leqslant X^{\theta_2}/p}\frac{(k,d)}d\ll\tau(k)\log X.
\end{equation}

Hence we conclude that
\begin{align*}
E_\CA(X;w,y)&\leq\sum_{w\leq p\leq y}\sum_{d\leq X^{\theta_2}/p}|E(X;p,d)|\\
&\ll\sum_{w\leq p\leq y}\sum_{d\leqslant X^{\theta_2}/p}\left(q\xi\tau(d)+
\frac X{pq}\sum_{d=d_1d_2}
\frac{(a,pd_2)(q,d_1)}{d_2d_1}\right)\\
&\ll q\xi \sum_{w\leq p\leq y}\frac {X^{\theta_2+\varepsilon}}p
+\frac Xq\sum_{w\leq p\leq y}\sum_{d_1d_2\leqslant X^{\theta_2}/p}
\frac{(a,pd_2)}{pd_2}\cdot\frac{(q,d_1)}{d_1}.
\displaybreak[0]\\
\end{align*}

Noticing that \eqref{eqn:4}, the lemma follows immediately.
\end{proof}

Thus we conclude our results above in a more general form:
Given $z=X^\alpha$, $y=X^\beta$ and $w=X^{\delta'}$, where $\alpha\leq\delta'\leq\beta\leq\theta_1$, then we have
\begin{align*}
\sum_{z\leq p<y}\CS\left(\CA_p,z\right)&=\sum_{z\leq p<w}\CS\left(\CA_p,z\right)+\sum_{w\leq p<y}\CS\left(\CA_p,z\right)\\
&\leq\sum_{z\leq p<w}\frac{\xi\pi(X)V(z)}{p}F_1(s_p)+\sum_{w\leq p<y}\frac{\xi XV_2(z)}{p}F_2(s_p')+\text{Error Term}\\
&=\xi\pi(X)V(z)\int_{z}^{w}F_1\left(\frac{\log(X^{\theta_1}/t)}{\log z}\right)d\sum_{p\geq t}\frac 1p\\
&\qquad+\xi XV_2(z)\int_{w}^{y}F_2\left(\frac{\log(X^{\theta_2}/t)}{\log z}\right)d\sum_{p\geq t}\frac 1p+\text{Error Term}\\
&=\xi\pi(X)V(z)\int_{z}^{w}\frac1{t\log t}F_1\left(\frac{\log(X^{\theta_1}/t)}{\log z}\right)dt\\
&\qquad+\xi XV_2(z)\int_{w}^{y}\frac1{t\log t}F_2\left(\frac{\log(X^{\theta_2}/t)}{\log z}\right)dt+\text{Error Term}\\
&=\xi\pi(X)V(z)\left(\int_{\alpha}^{\delta'}F_1\left(\frac{\theta_1-s}\alpha\right)\frac{ds}{s}+\frac{e^{-\gamma}}\alpha\int_{\delta'}^{\beta}
F_2\left(\frac{\theta_2-s}\alpha\right)\frac{ds}{s}+o(1)\right)\\
\end{align*}
Similarly, we have
\begin{align*}
\sum_{z\leq p<y}w_p\CS\left(\CA_p,z\right)
&\leq\xi\pi(X)V(z)\int_{\alpha}^{\delta'}\left(\frac1s-u\right)F_1\left(\frac{\theta_1-s}\alpha\right)ds\\
&\qquad+\xi\pi(X)V(z)\left(\frac{e^{-\gamma}}\alpha\int_{\delta'}^{\beta}
\left(\frac1s-u\right)F_2\left(\frac{\theta_2-s}\alpha\right)ds+o(1)\right).\\
\end{align*}
As shown later in this paper, we can optimize $\delta'$ to make the upper bounds of $\sum_{z\leq p<y}\CS\left(\CA_p,z\right)$ or $\sum_{z\leq p<y}w_p\CS\left(\CA_p,z\right)$ achieve their minimal value, where
$$
\delta'=\delta_0=\theta_2-\frac12\left(A_3
-\sqrt{A_3^2-4A_3\left(\theta_2-\theta_1\right)}\right),
$$
which is actually very close to $\theta_1$.
If we take $a\theta_1=6$, which is a simple but effective choice, then the computations from \cite{Lab79} tell us that
$$
b<\frac{\log(1 + e^{24B}) + D - \log6}{B} - \frac{18}{1 + e^{-24B}}\approx4.2,
$$
while $a\geq18$ by Theorem \ref{p:thm4}. Hence $\frac ba<0.24<\delta_0$ if $\rho\geq 15$, since actually we can take
$$
\delta_0=\frac23-\rho-\frac12\left(A_3
-\sqrt{A_3^2-\frac{4A_3}3}\right)\pm10^{-10}.
$$

Therefore, we can only use a 2-dimensional sieve to the last term in Lemma \ref{pp}.
\begin{lemma}\label{p:lem2}
We have
$$
\CS\left(\CA^*,X^{\frac12-\eta}\right)\leqslant
\big(4\mathfrak{I}(\rho)+o(1)\big)\frac{{\xi\pi(X)}}{\log X}
+O\left(X^{\varepsilon}\sum_{r\leqslant X^\nu}|R_r^*(\beta)|\right),
$$
where
\begin{equation}\label{eqn:5}
\sum_{r\leqslant X^\nu}|R_r^*(\beta)|\ll{\xi\pi(X)}X^{-\frac\eta2},
\end{equation}
with $\nu=\frac{1-\beta}2-\rho-2\eta$
and $\mathfrak{I}(\rho)$ is defined by
\begin{equation}\label{eqn:p}
\mathfrak{I}(\rho)
:=\int_{\frac\alpha 4}^{\frac14}\frac{\mathrm du_1}{u_1(1-u_1-2\rho)}
\int_{u_1}^{\frac{1-u_1}3}\frac{\mathrm du_2}{u_2}
\int_{u_2}^{\frac{1-u_1-u_2}2}\frac{\mathrm du_3}{u_3(1-u_1-u_2-u_3)}.
\end{equation}
\end{lemma}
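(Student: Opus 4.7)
The plan is to apply Chen's switching principle, combined with Buchstab's identity and the exponential-sum estimates of Section \ref{error:1}. First, I note that since $n \leq X$ and $(n, P(X^{\frac12-\eta})) = 1$, the integer $n$ must be either $1$, a prime, or a product $p_5 p_6$ of two primes in $[X^{\frac12-\eta}, X^{\frac12+\eta}]$; the contribution of $n = 1$ is $O(1)$. Hence $\CS(\CA^*, X^{\frac12-\eta})$ splits into a prime-count and a $\fP_2$-count over $n \in \CA(\beta)^*$, summed over $\beta$.

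The switching step is then to fix the factorization $p_1 p_2 p_3 p_4 = \round{(b'+na')/q}$ on the right-hand side and sum over $(p_1, p_2, p_3, p_4) \in \fN(\beta)$ in place of over $n$. For each such tuple, the constraint $\|(b'+na')/q\| < \xi/2$ confines $n$ to an interval of length $\asymp \xi$, so that the number of admissible $n$ (prime or $\fP_2$) in that interval is $\asymp \xi/\log X$ by the prime number theorem together with a standard short-interval sieve. Replacing the dyadic $\sum_\beta$ by an integration over $u_1 = \log p_1/\log X \in [\alpha/4, 1/4]$, and converting the nested prime sums over $p_2$ and $p_3$ (with $p_4$ determined by $p_1 p_2 p_3 p_4 \asymp X$) into integrals via Mertens' formula, I would recover the triple integral $\mathfrak{I}(\rho)$ of \eqref{eqn:p}. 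The factors $1/u_i$ come from the Mertens densities, $1/(1-u_1-u_2-u_3)$ from the PNT density at the determined prime $p_4$, and $1/(1-u_1-2\rho)$ from the short-interval density on the $n$-side combined with the room $\xi = X^{-\rho}$; the prefactor $4$ emerges from the combinatorics of assembling the prime and $\fP_2$-contributions under the ordering $p_1 \leq p_2 \leq p_3 \leq p_4$.

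For the error bound \eqref{eqn:5}, the accumulated remainders collect to $\sum_{r \leq X^\nu} |R_r^*(\beta)|$ with $\nu = (1-\beta)/2 - \rho - 2\eta$; this is controlled by Corollary \ref{p:cor13-new} after decomposing the relevant exponential sums via Vaughan's identity and invoking Lemmas \ref{p:lem9-new}--\ref{p:lem6}. Admissibility of the level $\nu$ under $q \asymp X^{\frac13+\rho+\eta}$ is then immediate.

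The hardest part will be the second step: the simultaneous parametrization of the switching principle and the Buchstab decomposition. One must track the ordering constraints $p_1 \leq p_2 \leq p_3 \leq p_4$, the size conditions from $\fN(\beta)$, and the two separate cases for $n$, all while organizing the main term to yield precisely the integrand of $\mathfrak{I}(\rho)$ with the correct prefactor $4$.
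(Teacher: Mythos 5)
The paper's own proof of Lemma~\ref{p:lem2} is a one-line citation: ``This follows from \cite{4:HR74}, Theorem 8.3 and \cite{1:Har84}.'' You have instead sketched a self-contained argument via the switching principle; the broad outline (decompose $n$ with $(n,P(X^{1/2-\eta}))=1$ into prime or $P_2$, switch roles with $m=\round{(b'+na')/q}\in\fN(\beta)$, pass to a triple integral by Mertens and the PNT) is indeed the right spirit, but the central step is wrong as stated.

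The problem is your claim that for each fixed tuple $(p_1,p_2,p_3,p_4)\in\fN(\beta)$ ``the constraint confines $n$ to an interval of length $\asymp\xi$, so that the number of admissible $n$ (prime or $\fP_2$) in that interval is $\asymp\xi/\log X$ by the prime number theorem together with a standard short-interval sieve.'' Since $\xi=X^{-\rho}<1$, that interval contains at most one integer; its count of prime/$P_2$ values is $0$ or $1$, never $\asymp\xi/\log X$, and no ``short-interval sieve'' can produce a sub-unit main term pointwise. The asymptotic $\xi/\log X$ can only appear \emph{on average over $m$}, and establishing that average requires an equidistribution input -- precisely the exponential-sum estimates and the resulting level of distribution $X^\nu$ with $\nu=(1-\beta)/2-\rho-2\eta$, fed into a linear sieve upper bound. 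That sieve is where both the prefactor $4$ and the factor $1/(1-u_1-2\rho)$ in $\mathfrak{I}(\rho)$ actually come from: $F_1(s)\cdot V(X^{1/2-\eta})$ contributes $\frac{2e^\gamma/s\cdot 2e^{-\gamma}}{\log X}$ with $s$ depending on $u_1=\beta$ via the level (hence $1-u_1-2\rho$ in the denominator), not from ``combinatorics of assembling the prime and $\fP_2$-contributions,'' as you assert. As written, your argument would yield neither the normalization nor the correct $u_1$- and $\rho$-dependence in the integrand, so the core of the lemma -- why precisely $4\mathfrak{I}(\rho)$ appears -- is not justified by your sketch.
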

\begin{proof}
This follows from \cite{4:HR74}, Theorem 8.3 and \cite{1:Har84}.
\end{proof}
\begin{remark}
We shall use \eqref{eqn:5} to give some restrictions in Theorem \ref{p:thm4}.
\end{remark}

\begin{proof}[Proof of theorem \ref{p:thm1}]

We have
$$
\sum_{p\geqslant X^{\frac1a}}\sum_{h\in\CA_{p^2}}1
\ll\sum_{p\geqslant X^{\frac1a}}\frac{\pi(x)\xi}{p^2}
\ll\frac{\pi(x)\xi}{X^{\frac1a}}
\ll X^{1-\eta}\xi
=o\left({\xi\pi(X)}V(z)\right).
$$
It comes from lemma \ref{p:lem4-new},
lemma \ref{p:lem5-new} and lemma \ref{p:lem7-new} that
\begin{align*}
\lambda^{-1}\CW(\CA,u,\lambda)&\geq\lambda^{-1}\CS(\CA,z)-\CS_{\widetilde{w}}(\CA)+o\left({\xi\pi(X)}V(z)\right)\\
&=\CW_1(\CA,u,\lambda)-\CW_2(\CA,u,\lambda)+o\left({\xi\pi(X)}V(z)\right),
\end{align*}
where
\begin{align*}
\CW_1(\CA,u,\lambda)&:=\lambda^{-1}\CS(\CA,z)-(c-b)\sum_{X^{\frac 1a}\leq p<X^{\frac ba}}\CS\left(\CA_p, X^{\frac1a}\right)\\
&\qquad-a\int_{\frac1a}^{\frac{b+1}{2a}}\left(\sum_{X^{s}\leq p\leq X^{\frac{b+1}{a}-s}}\CS\left(\CA_p, X^{s}\right)ds\right)-c\sum_{X^{\frac ba}\leq p<X^{\delta}}w_p\CS\left(\CA_p, X^{\frac1a}\right)\\
&\qquad-c\sum_{X^{\frac 1a}\leq p<X^{\frac {b+1}{2a}}}\left(\frac{b+1}{c}-\frac{2u\log p}{\log X}\right)\CS\left(\CA_p, p\right)\\
&\geq\xi\pi(X)V(z)\{(5c-a)f_1(a\theta_1)-\int_{\frac1{a\theta_1}}^{\frac{b+1}{2a\theta_1}}\left(\int_{s}^{\frac{b+1}{a\theta_1}-s}F_1\left(\frac{1-t}s\right)
\frac{dt}t\right)\frac{ds}s\\
&\qquad-(c-b)\int_{\frac1{a\theta_1}}^{\frac b{a\theta_1}}F_1(a\theta_1(1-s))\frac{ds}s-\int_{\frac b{a\theta_1}}^{\delta}(\frac cs-a\theta_1)F_1\left(a\theta_1(1-s)\right)ds\\
&\qquad-\int_{\frac1{a\theta_1}}^{\frac{b+1}{2a\theta_1}}\left(\frac{b+1}{a\theta_1}-2s\right)F_1\left(\frac{1-s}s\right)\frac{ds}{s^2}\}+o(1)\\
&\geq\xi\pi(X)V(z)\{(5c-a)f_1(\vartheta)-\int_{\frac 1{\vartheta}}^{\frac{b+1}{2\vartheta}}\left(\int_{s}^{\frac{b+1}{\vartheta}-s}F_1\left(\frac{1-t}s\right)\frac{dt}t\right)\frac{ds}s\\
&\qquad-(c-b)\int_{\frac1{\vartheta}}^{\frac b\vartheta}F_1(\vartheta(1-s))\frac{ds}s-\int_{\frac b\vartheta}^{\frac c\vartheta}(\frac cs-\vartheta)F_1\left(\vartheta(1-s)\right)ds+o(1)\\
&\qquad-\int_{\frac1\vartheta}^{\frac{b+1}{2\vartheta}}\left(\frac{b+1}{\vartheta}-2s\right)F_1\left(\frac{1-s}s\right)\frac{ds}{s^2}
+\int_{\delta}^{\frac c\vartheta}(\frac cs-\vartheta)F_1\left(\vartheta(1-s)\right)ds\};
\end{align*}
where $\vartheta=a\theta_1$, $\delta\in[\frac b\vartheta, \frac c\vartheta]$, and
\begin{align*}
\CW_2(\CA,u,\lambda)&:=c\sum_{X^{\delta}\leq p<X^{\frac ca}}w_p\CS\left(\CA_p, X^{\frac1a}\right)\\
&\leq ae^{-\gamma}\xi\pi(X)V(z)\left(\int_{\delta}^{\frac c\vartheta}\left(\frac cs-\vartheta\right)
F_2\left(a\theta_2-\vartheta s\right)\mathrm ds
+O\left( q\xi X^{\theta_2+\varepsilon}
+\frac {X^{1+\varepsilon}}q\right)\right).
\end{align*}
To be admissible, $\theta_2$ can be taken to be any number smaller than $\frac23-\rho$ since $q\asymp q^{\frac{1}{3}+\rho+\eta}.$ Take $\theta_2= \frac {2}{3}-\rho-\varepsilon$ and $\frac{\theta_2}{\theta_1}\theta\rightarrow\theta:=\frac{\frac23-\rho}{\frac13-\rho}$ as $\varepsilon\rightarrow0^{+},$
thus by continuity and Lemma \ref{p:lem2} we have, when $\varepsilon$ is sufficiently small,
\begin{align*}
\mathcal{J}(\lambda)\geq \frac{ae^{-\gamma}\lambda\xi\pi(X)}{\log X}\mathcal{H}(\vartheta,b,c),
\end{align*}
where
\begin{align*}
\mathcal{H}(\vartheta,b,c)=\mathcal{H}_{\delta}(\vartheta,b,c)&:=(5c-a)f_1(\vartheta)-\int_{\frac 1{\vartheta}}^{\frac{b+1}{2\vartheta}}\left(\int_{s}^{\frac{b+1}{\vartheta}-s}F_1\left(\frac{1-t}s\right)\frac{dt}t\right)\frac{ds}s\\
&\qquad-(c-b)\int_{\frac1{\vartheta}}^{\frac b\vartheta}F_1(\vartheta(1-s))\frac{ds}s-\int_{\frac b\vartheta}^{\frac c\vartheta}(\frac cs-\vartheta)F_1\left(\vartheta(1-s)\right)ds+o(1)\\
&\qquad-\int_{\frac1\vartheta}^{\frac{b+1}{2\vartheta}}\left(\frac{b+1}{\vartheta}-2s\right)F_1\left(\frac{1-s}s\right)\frac{ds}{s^2}
+\int_{\delta}^{\frac c\vartheta}(\frac cs-\vartheta)F_1\left(\vartheta(1-s)\right)ds\\
&\qquad- ae^{-\gamma}\left(\int_{\delta}^{\frac c\vartheta}\left(\frac cs-\vartheta\right)
F_2\left(a\theta_2-\vartheta s\right)\mathrm ds\right)-\frac{4e^{\gamma}c}a\mathfrak{I}(\rho).
\end{align*}
Then
\begin{align*}
\mathcal{H}_{c}'(\vartheta,b,c)=2f_1(\vartheta)-\int_{\frac1\vartheta}^{\delta}F_1\left(\vartheta(1-s)\right)\frac{ds}s-ae^{-\gamma}\int_{\delta}^{\frac c\vartheta}F_2\left(\vartheta(\theta-s)\right)\frac{ds}s-\frac{4e^{\gamma}}a\mathfrak{I}(\rho),
\end{align*}
and
\begin{align*}
\mathcal{H}_{b}'(\vartheta,b,c)&=-f_1(\vartheta)+\int_{\frac1\vartheta}^{\frac b\vartheta}F_1\left(\vartheta(1-s)\right)\frac{ds}s-\frac1\vartheta\int_{\frac1\vartheta}^{\frac{b+1}{2\vartheta}}F_1\left(\frac{1-s}s\right)\frac{ds}{s^2}\\
&\qquad-\int_{\frac1\vartheta}^{\frac{b+1}{2\vartheta}}F_1\left(\frac{\vartheta s+\vartheta-1-b}{\vartheta s}\right)\frac{ds}{s(b+1-\vartheta s)}.
\end{align*}
Assume that $\vartheta\geq 4$ and $b\geq\vartheta-3$, then we have
\begin{align*}
\mathcal{H}_{bc}''(\vartheta,b,c)&=0;\\
\mathcal{H}_{b^2}''(\vartheta,b,c)&=\frac1b F_1(\vartheta-b)-\left(-\frac1{2\vartheta-1-b}+\frac1{\vartheta-b}-\frac1{b+1}+\frac1b\right)\\
&\qquad-\frac{2}{(b+1)^2}F_1\left(\frac{2\vartheta}{b+1}-1\right)=0;\\
\mathcal{H}_{c^2}''(\vartheta,b,c)&=-\frac{ae^{-\gamma}F_2(\vartheta\theta-c)}{c}.
\end{align*}

Thus we can write $\mathcal{H}(\vartheta,b,c)$ as
\begin{align*}
\mathcal{H}(\vartheta,b,c)=2e^{\gamma}\left(A(\vartheta)b+B(\vartheta)c+D(\vartheta)+\mathfrak{F}(\vartheta,c)\right).
\end{align*}
where
\begin{align*}
\mathfrak{F}(\vartheta,\vartheta\theta)=0~\ \text{and}~\
\mathfrak{F}'(\vartheta,c)=-\frac a{2e^{2\gamma}}\int_{\delta}^{\frac c\vartheta}F_2\left(\vartheta(\theta-s)\right)\frac{ds}s,
\end{align*}
and $A(\vartheta)$, $B(\vartheta)$, $D(\vartheta)$ are determined by
\begin{align*}
2e^{\gamma}\left(A(\vartheta)+B(\vartheta)\right)
&=f_1(\vartheta)-\frac1\vartheta\int_{\frac1\vartheta}^{\frac{b+1}{2\vartheta}}F_1\left(\frac{1-s}s\right)\frac{ds}{s^2}-\frac{4e^{\gamma}}{a}\mathfrak{I}(\rho)\\
&\qquad-\int_{\frac1\vartheta}^{\frac{b+1}{2\vartheta}}F_1\left(\frac{\vartheta s+\vartheta-1-b}{\vartheta s}\right)\frac{ds}{s(b+1-\vartheta s)}\\
&=f_1(\vartheta)-\frac{2e^{\gamma}}{\vartheta}\log\frac{\delta(2\vartheta-\vartheta\delta-1)}{(1-\delta)(\vartheta\delta+1)}
-f_1(\vartheta)+\frac{2}{\vartheta\delta+1}f_1\left(\frac{2\vartheta}{\vartheta\delta+1}\right)-\frac{4e^{\gamma}}{a}\mathfrak{I}(\rho)\\
&=\frac{2e^{\gamma}}{\vartheta}\log\frac{1-\delta}{\delta}-\frac{4e^{\gamma}}{a}\mathfrak{I}(\rho);
\end{align*}
Also we have, by direct computation,
\begin{align*}
B(\vartheta)&=e^{-\gamma}f_1(\vartheta)-\frac1{2e^{\gamma}}\int_{\frac1{\vartheta}}^{\delta}
F_1\left(\vartheta(1-s)\right)\frac{ds}{s}
-\frac{2}{a}\mathfrak{I}(\rho),
\end{align*}
so
\begin{align*}
A(\vartheta)&=-e^{-\gamma}f_1(\vartheta)+\frac1{2e^{\gamma}}\int_{\frac1{\vartheta}}^{\delta}
F_1\left(\vartheta(1-s)\right)\frac{ds}{s}+\frac{1}{\vartheta}\log\frac{1-\delta}{\delta};\\
D(\vartheta)&=\frac1{2e^{\gamma}}\mathcal{H}(\vartheta,\vartheta\theta,\vartheta\theta)-\delta\log\frac{1-\delta}{\delta}+\frac{2\vartheta\theta}{a}\mathfrak{I}(\rho).
\end{align*}
Thus , then by the continuity of $F_2$ we obtain theorem \ref{p:thm1}.
\end{proof}

\section{Proof of Theorem \ref{p:main}}
It is obvious that by Corollary \ref{4} and Theorem \ref{p:thm1} we have:
\begin{thm}\label{p:thm3}
The restriction from the main terms is given by
$$
\begin{cases}
\displaystyle 1\leq b\leq c\leq a=\frac{\vartheta}{\theta_1} \\[0.5em]
b=1 ~\ \text{or}~\ a\geq 3c+b+1,\quad \text{if}~\ b\geq 3 \\[0.5em]
\frac b{\vartheta}\leq\delta_0\leq\frac c{\vartheta}\\[0.5em]
\displaystyle \max_{\frac b{\vartheta}\leq\delta\leq\frac c{\vartheta}}\mathcal{H}_{\delta}(\vartheta,b,c)>0.
\end{cases}
$$
where
$\mathcal{H}_{\delta_0}(\vartheta,b,c)$ is defined by \eqref{x:10} with $F_2$ defined as before Theorem \ref{p:thm1}.
\end{thm}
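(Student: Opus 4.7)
The plan is to establish Theorem \ref{p:thm3} as a direct consolidation of the hypotheses and bounds already obtained in Sections \ref{s:2}--\ref{s:3}, line by line of the stated system. First I would identify the origin of each constraint and then collect them together.

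First, the line $1 \leq b \leq c \leq a = \vartheta/\theta_1$ comes from the very definition of the Buchstab-type weight $\widetilde{w_p}$ introduced in Section \ref{s:2}, where we already imposed $1 \leq b \leq c \leq a = cu$, combined with the normalization $a = \vartheta/\theta_1$ made immediately before Theorem \ref{p:thm1}. The second line, $b = 1$ or $a \geq 3c + b + 1$ when $b \geq 3$, will be lifted from the hypothesis of the lemma that yields $\mathcal{S} \geq \mathcal{J}(\lambda)$: one needs it so that in Case~1 of that lemma the summed weights $\sum_{p \mid s} \widetilde{w_p}$ are at least $5c - a$ on $\mathcal{A} \setminus \mathfrak{P}_4$, which together with $\lambda^{-1} < 5c - a$ produces inequality \eqref{t:1}. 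The third line, $b/\vartheta \leq \delta_0 \leq c/\vartheta$, reflects the range of $\delta$ over which Theorem \ref{p:thm1} actually provides a lower bound, since the integral formulae for $A_\delta, B_\delta, D_\delta, \mathfrak{F}_\delta$ are valid only on that interval.

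For the fourth line I would combine three ingredients: (i) the lower bound
\[
\mathcal{J}(\lambda) \geq \frac{ae^{-\gamma}(1+o(1))\lambda \xi \pi(X)}{\log X}\, \mathcal{H}_\delta(\vartheta, b, c)
\]
from Theorem \ref{p:thm1}, valid for each $\delta \in [b/\vartheta, c/\vartheta]$; (ii) Corollary \ref{p:cor13-new}, which absorbs every exponential-sum error term arising from the sieve decompositions of $\mathcal{S}(\mathcal{A},z)$, $\mathcal{S}(\mathcal{A}_p,z)$ into an $o(\xi\pi(X))$ remainder, so that the main-term lower bound is not destroyed; and (iii) the corollary immediately following the $\mathcal{S} \geq \mathcal{J}(\lambda)$ lemma, which says that as soon as $\mathcal{J}(\lambda) \gg \pi(X)\xi/\log X$ holds, Theorem \ref{p:main} follows with $\tau = \rho$. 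Taking the maximum over $\delta$ in the prescribed range then pins down the positivity requirement $\max_{b/\vartheta \leq \delta \leq c/\vartheta} \mathcal{H}_\delta(\vartheta, b, c) > 0$ as the remaining admissibility constraint.

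The hard part, such as it is, is purely bookkeeping: verifying that the parameter choices made across the various lemmas are mutually consistent. In particular, one must check that $a = \vartheta/\theta_1$ is compatible with the hypothesis $0 < 4/a \leq 4\beta \leq 1$ used in the definition of $\mathcal{A}(\beta)^*$, that the two-dimensional level of distribution $\theta_2 = 2/3 - \rho - \varepsilon$ remains compatible with the linear-sieve level $\theta_1 = 1/3 - \rho - \varepsilon$, and that the $o(1)$ terms in Theorem \ref{p:thm1} are uniform in $\delta$ on the closed interval $[b/\vartheta, c/\vartheta]$, so that the maximum is legitimately attained. The actual numerical verification that $\max_\delta \mathcal{H}_\delta(\vartheta, b, c) > 0$ for specific choices of $(b, c, \vartheta, \rho)$ is then deferred to the \emph{Mathematica}-based optimization announced in Section \ref{s:2}.
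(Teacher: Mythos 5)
Your proposal is correct and takes essentially the same route as the paper, which gives no proof beyond the remark ``It is obvious that by Corollary \ref{4} and Theorem \ref{p:thm1} we have:'' --- you have simply made explicit the bookkeeping that the authors leave implicit, correctly tracing each of the four constraints back to the definition of $\widetilde{w_p}$ and the normalization $a=\vartheta/\theta_1$, the hypothesis of the $\CS\geq\CJ(\lambda)$ lemma, the admissible $\delta$-range in Theorem \ref{p:thm1}, and the combination of Theorem \ref{p:thm1} with Corollary \ref{p:cor13-new} and the corollary following the $\CS\geq\CJ(\lambda)$ lemma.
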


\begin{thm}\label{p:thm4}
The restrictions from the error terms are given as the
following inequation systems:
$$
\begin{cases}
0<\rho<\min\left\{\frac16,\frac1a\right\}, \\
\theta_1+\rho<\frac13,~\theta_1>0.
\end{cases}
$$
\end{thm}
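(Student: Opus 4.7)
The plan is to identify which step of the error analysis forces each of the four stated inequalities. Three of them come from Corollary \ref{p:cor13-new} and its supporting lemmas, and one is a mild technical requirement built into the sieve set-up.

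I would first split the sum in Corollary \ref{p:cor13-new} dyadically into ranges $d\sim R\leq X^{\theta_1}$ and $l\sim L\leq RY\asymp RX^{\rho+\eta}$, and apply Lemma \ref{p:lem6} with $T=\max(L,R)$. The hypothesis $q<X^{2/3}$ there reduces, using $q\asymp X^{1/3+\rho+\eta}$, to $\rho<1/3$, which is subsumed by the other conditions below. The first of its two bounds, $X^{2/3}TR$, contributes after multiplication by $\xi/R$ and dyadic summation over $L$ and $R$ a term of order $\xi X^{2/3+\theta_1+\rho+O(\eta)}$; comparing with the required estimate $\ll\xi\pi(X)X^{-\eta}\asymp X^{1-\rho-\eta}/\log X$ forces $\theta_1+\rho<1/3-O(\eta)$, hence $\theta_1+\rho<1/3$. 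The second bound $X^{11/12}(TR)^{1/2}$, at the dominant endpoint $T=L=RY$, contributes $\ll\xi X^{11/12}Y^{1/2}=X^{11/12-\rho/2+O(\eta)}$, in which the dependence on $R$ cancels; comparing once more with $X^{1-\rho-\eta}$ gives $\rho<1/6$.

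Next, I would extract $\rho<1/a$ from the sieve set-up itself: the sifting scale $z=X^{1/a}$ must exceed the Diophantine density scale $\xi^{-1}=X^{\rho}$, so that a typical prime $p\leq z$ is expected to hit each relevant residue class in $\CA$ at least $\pi(X)\xi/p\gg 1$ times. Concretely, in the proof of Lemma \ref{p:lem4-new} the main term $\pi(X)\xi/d$ for $d\leq z$ must dominate the $O(1)$ contribution per class arising from $\round{\cdot}$, which holds iff $\xi z>1$, i.e.\ $\rho<1/a$. The positivity constraints $\rho>0$ and $\theta_1>0$ are immediate: the former so that $\xi=X^{-\rho}\to 0$, the latter so that the hypothesis $q\asymp X^{1/3+\rho+\eta}$ permits a genuine level of distribution $X^{\theta_1}$.

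The main technical obstacle is really the generalization of the exponential-sum lemmas of \cite{1:Har84} to allow the $\max_{N\leq X}$ inside, since without this the dyadic analysis above would lose information after the Vaughan decomposition; this is already handled in Lemmas \ref{p:lem9-new}--\ref{p:lem6} by smoothing the sharp cutoff via $A=2TX$, $\gamma_{lm}=\log(N_{lm}+1/2)$, so that the only additional loss is a factor of $\log X$. Once those lemmas are in place, the proof of Theorem \ref{p:thm4} reduces to the bookkeeping above; a parallel treatment of the type II range in Lemma \ref{p:lem11-new} yields no new inequalities beyond those already derived, and the four conditions stated in the theorem are exactly the conjunction of what is needed.
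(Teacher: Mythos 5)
Your derivation of $\theta_1+\rho<\frac13$ and $\rho<\frac16$ is correct and follows the same route as the paper: dyadic decomposition, apply Lemma \ref{p:lem6} with $T_i\ll R_iY\ll R_iX^{\rho+\eta}$, compare the two resulting terms with $\pi(X)X^{-\eta}$. So far, so good.

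The problem is your treatment of $\rho<\frac1a$. You assert that the Type~II analysis from Lemma \ref{p:lem11-new} "yields no new inequalities beyond those already derived," and instead extract $\rho<\frac1a$ from a sieve-consistency heuristic in Lemma \ref{p:lem4-new}. This is where the proof breaks. First, the heuristic is internally inconsistent: the criterion you state, $\pi(X)\xi/p\gg1$ for $p\leqslant z$, is worst at $p=z=X^{1/a}$ and gives $X^{1-\rho-1/a}/\log X\gg1$, i.e.\ $\rho+\frac1a<1$ --- a much weaker requirement than $\rho<\frac1a$, not equivalent to "$\xi z>1$" as you claim. Second, and more importantly, the paper actually obtains $\rho<\frac1a$ from exactly the step you dismissed: the error term \eqref{eqn:5} for the switching sum $\CS(\CA(\beta)^*,X^{\frac12-\eta})$. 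There one has $R_r^*(\beta)$ with $r\leqslant X^\nu$, $\nu=\frac{1-\beta}2-\rho-2\eta$, and after writing the remainder as a bilinear form with $M\asymp X^\beta$ and applying Lemma \ref{p:lem11-new} (divided by $R$ and multiplied by $\xi$), the second term of that lemma contributes $\xi X^{1+4\varepsilon+\frac{\rho+\eta}2-\frac\beta2}$. Requiring this to be $\ll\xi\pi(X)X^{-\eta}$ forces $\beta>\rho$; since $\beta$ ranges down to $\frac1a$ in the definition of $\fN(\beta)$ (the paper requires $\frac1a\leqslant\beta\leqslant\frac14$), the binding case is $\beta=\frac1a$, giving $\rho<\frac1a$. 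Without this Type~II step you have no valid derivation of the third condition, so the proof is incomplete.
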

\begin{proof}
In Corollary \ref{p:cor13-new} and Lemma \ref{p:lem6} above, where we show that
$$
\sum_{r\leqslant X^\alpha}\frac1r
\max_{N\leqslant X}\sum_{l=1}^{rY}
\left|\sum_{n\leqslant N}\Lambda(n)
e\left(\frac{\alpha nl}{rq}\right)\right|\ll\pi(X)X^{-\eta},
$$
with $Y\asymp X^{\rho+\eta}$, we have to make sure that all the parameters
satisfy the assumptions of those lemmas.

Divide the intervals into dyadic segments and thus we have the
following estimation:
\begin{align*}
\sum_i\sum_{i_j}X^\varepsilon\left(
X^{\frac23}T_i+X^{\frac{11}{12}}\left(
\frac{T_i}{R_i}\right)^{\frac12}\right)
&\ll X^{2\varepsilon}X^{\frac23+\theta_1+\rho+\eta}
+X^{\frac{11}{12}}\sum_i\sum_{i_j}X^{\frac{\rho+\eta}2} \\
&\ll X^{2\varepsilon}X^{\frac23+\theta_1+\rho+\eta}
+X^{\frac{11}{12}+\varepsilon+\frac{\rho+\eta}2},
\end{align*}
where $L_{i_j}\leqslant R_iY\ll R_iX^{\rho+\eta}$,
$T_i\ll R_iX^{\rho+\eta}$
and for simplicity
we omit the precise range of $i$ and $j$, actually, only the bound
$i,j\ll\log X$ matters.

Therefore, we get our restrictions
as below:
$$
\begin{cases}
2\varepsilon+\frac23+\theta_1+\rho+\eta<1-\eta, \\
\frac{11}{12}+\varepsilon+\frac{\rho+\eta}2<1-\eta,
\end{cases}
$$
i.e.
$$
\begin{cases}
\theta_1+\rho<\frac13, \\
\rho<\frac16.
\end{cases}
$$

Now let's consider another estimation from \eqref{eqn:5}.
By assumption, we have $X^{\rho+\beta+\eta}
<X^{\rho+\frac14+\eta}<q<X^{\frac34-\eta}<X^{1-\beta-\eta}$.
Additionally, by Lemma \ref{p:lem6}, there should be
$$
\xi X^{1+3\varepsilon}\sum_i\sum_{i_j}\left(L_{i_j}+\frac{R_i}{X^\beta}\right)
\left(X^{\frac{\beta-1}2}+X^{-\beta}R_i^{-\frac12}
\left(L_{i_j}+\frac{R_i}{X^\beta}\right)^{-\frac12}\right)
\ll{\xi\pi(X)}X^{-\eta}.
$$
While
\begin{multline*}
\xi X^{\frac{\beta+1}2+3\varepsilon}\sum_i\sum_{i_j}
\left(L_{i_j}+\frac{R_i}{X^\beta}\right) \\
\ll\xi X^{\frac{\beta+1}2+4\varepsilon}
\left(X^{\frac{1-\beta}2-\rho-2\eta}
+X^{-\beta}X^{\frac{1-\beta}2-2\eta}\right)
\ll\xi X^{1-\rho-2\eta+4\varepsilon}+\xi X^{1-\beta-2\eta},
\end{multline*}
and
\begin{multline*}
\xi X^{1+3\varepsilon}\sum_i\sum_{i_j}\left(X^{-\frac\beta2}R_i^{-\frac12}
\left(L_{i_j}+\frac{R_i}{X^\beta}\right)^{\frac12}\right) \\
\ll\xi X^{1+3\varepsilon}\sum_i\sum_{i_j}\left(X^{-\frac\beta2}R_i^{-\frac12}
L_{i_j}^{\frac12}+X^{-\beta}\right)
\ll\xi X^{1+4\varepsilon+\frac{\rho+\eta}2-\frac\beta2},
\end{multline*}
so it suffices to have the restriction:
$1+4\varepsilon+\frac{\rho+\eta}2-\frac\beta2<1-\frac\eta2-\varepsilon$,
which could be deduced by the condition:
$\beta>\rho\Leftarrow\frac1a>\rho$.
This completes the proof.
\end{proof}

Combine all the restrictions from Theorem \ref{p:thm3} and
Theorem \ref{p:thm4}. Take $\theta_1=\frac13-\rho-10^{-12}$, then insert this into the above conditions,
with the help of the software \emph{Mathematica 9},
we find that
$\rho=\frac1{118}$ satisfies the restrictions above, when $b=1,$ $c=3.98,$ $\vartheta=4.07$ (hence $a\approx12.5285$), noting that $a$ slight larger than $\frac4\alpha$ in \cite{1:Har84}.
Thus we have proven that
there are infinitely many solutions of
$$
|\lambda_0+\lambda_1p+\lambda_2P_3|<p^{-\frac1{118}}.
$$
\begin{remark}
We thus see that in our situation Laborde's weight is not better than Richert's weight because of the effect from $\CS(\CA(\beta)^*,X^{\frac12-\eta}),$ since $\frac{\mathfrak{I}(\rho)}a=a\mathfrak{I}(\rho,a)$ grows faster than $f_1(\vartheta)$ when $\vartheta\geq4$. When $b>1,$ which forces that $a\geq3c+b+1>5,$ the contribution of $\CS(\CA(\beta)^*,X^{\frac12-\eta})$ is just too large for our purpose. If we just take $\delta=\alpha$ as Harman did in \cite{1:Har84}, then by optimizing the parameters directly we have $\tau<\frac1{146}$ and we can take $\tau=\frac1{147}.$
\end{remark}

\section{Estimates for exponential sums II}\label{s:15}
\begin{lemma}[\cite{BF92}]\label{w}
For any $\iota\in[0,1]$, let
$$
f_{h,\iota,\varsigma}(x):=h\left(x+\iota\right)^{\gamma}+\varsigma x,
$$
where $h\in \BN$ and $\varsigma$ is an arbitrary constant. Take $\sigma$ satisfying the restriction  $\sigma<\frac{9\gamma-8}{12}$. Then any sufficiently small $\eta>0$, we have
$$
\min\left\{1, \frac{X^{1-\gamma}}H\right\}\sum_{h\sim H}\left|\sum_{n\sim X}\Lambda(n)e\left(f_{h,\iota,\varsigma}(n)\right)\right|\ll_{\eta}X^{1-\sigma-3\eta},
$$
where $H\leq X^{1-\gamma+\sigma+\varepsilon}$.
\end{lemma}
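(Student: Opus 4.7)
The plan is to follow the standard Vaughan--Heath-Brown decomposition strategy combined with van der Corput's method, handling the extra average over $h$ either trivially (for small $H$) or by Cauchy--Schwarz (for large $H$), which is exactly what the factor $\min\{1,X^{1-\gamma}/H\}$ records. First I would apply Heath-Brown's identity to the inner sum $\sum_{n\sim X}\Lambda(n)e(f_{h,\iota,\varsigma}(n))$, expressing it as a combination of $O(\log^{O(1)}X)$ bilinear sums
$$T(M,N)=\sum_{m\sim M}\sum_{n\sim N}a_m b_n\,e\!\left(f_{h,\iota,\varsigma}(mn)\right),\qquad MN\asymp X,$$
with $|a_m|,|b_n|\ll X^\varepsilon$, separated into \emph{Type I} sums (one variable smooth, $M$ small) and \emph{Type II} sums (both variables in a middle range).

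For Type I, the inner sum over $n$ has phase $h(mn+\iota)^{\gamma}+\varsigma mn$, whose second derivative has size $\asymp h m^{2}X^{\gamma-2}$, independent of $\varsigma$. A direct application of the van der Corput $B$-process, or an optimal exponent pair $(k,\ell)$, then yields a saving which, after summing trivially over $m\sim M$ and $h\sim H$ and multiplying by $\min\{1,X^{1-\gamma}/H\}$, is admissible provided $M$ does not exceed a threshold $X^{\mu_1(\gamma,\sigma)}$. For Type II, I would use Cauchy--Schwarz in the $n$-variable, jointly with the $h$-average when $H$ is large (this is where the $\min$-factor is used), obtaining
$$\sum_{h\sim H}\sum_{m,m'\sim M}\left|\sum_{n\sim N}e\!\left(h[(mn+\iota)^{\gamma}-(m'n+\iota)^{\gamma}]\right)\right|,$$
and then estimate the inner sum by an exponent pair applied to the difference phase, whose second derivative in $n$ has size $\asymp h N^{\gamma-2}|m^{\gamma}-m'^{\gamma}|$. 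The diagonal $m=m'$ contributes at most $HMN=HX$, which after multiplication by $\min\{1,X^{1-\gamma}/H\}$ becomes $\min\{HX,X^{2-\gamma}\}$, both within the target.

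Finally, one optimizes the exponent pair together with the splitting threshold between the Type I and Type II regimes; the precise bound $\sigma<(9\gamma-8)/12$ emerges from balancing the two regimes under the additional constraint $H\leq X^{1-\gamma+\sigma+\varepsilon}$. The main obstacle will be this delicate coupling of the parameters $(H,M,N)$ with the exponent pair parameters: one has to verify that the off-diagonal contribution in the Type II estimate, after a further $A$-process on the large variable, still respects uniformity in the auxiliary parameters $\iota$ and $\varsigma$. Since $\iota$ only shifts the variable and $\varsigma$ contributes only to the linear part of the phase, both drop out of every derivative of order $\geq 2$, so the exponent pair estimates are uniform in them and the argument goes through, giving precisely the stated range for $\sigma$.
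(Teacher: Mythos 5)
The paper does not actually prove Lemma~\ref{w}: it is cited directly from Balog--Friedlander \cite{BF92}, and the only commentary offered is the remark immediately following, namely that the implied constant is uniform in $\iota$ and $\varsigma$ because only derivatives of order $\geq 2$ of the phase are used in the Type~I\,/\,Type~II analysis after Heath-Brown's identity. Your sketch correctly reproduces this structure --- Heath-Brown decomposition into bilinear forms, van der Corput or exponent pairs on the Type~I sums, Cauchy--Schwarz plus Weyl differencing on the Type~II sums, with the $\min\{1,X^{1-\gamma}/H\}$ factor interpreted as choosing between a trivial and an averaged $h$-treatment --- and your observation that $\iota$ merely shifts the variable while $\varsigma$ contributes only a linear term (hence drops out of all second and higher derivatives) is exactly the paper's stated justification of uniformity. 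So the approach matches what the cited source does.

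That said, the lemma's entire quantitative content is the explicit exponent $\sigma<\frac{9\gamma-8}{12}$ under the constraint $H\leq X^{1-\gamma+\sigma+\varepsilon}$, and this is precisely the step your sketch leaves unverified: you defer to ``the precise bound emerges from balancing the two regimes'' without specifying which exponent pair or van der Corput process is applied to the Type~I and differenced Type~II phases, nor how the thresholds $M\asymp X^{\mu_1}$ and $N$ are coupled to $H$ in the optimization. As written, a reader cannot confirm that the right side actually comes out to $X^{1-\sigma-3\eta}$ rather than with some other exponent. For a self-contained proof you would need to carry the $A$/$B$ processes through both types, track the diagonal and off-diagonal Type~II contributions uniformly in $h$, and exhibit the balance that produces $\tfrac{9\gamma-8}{12}$; as it stands the framework is correctly identified but the computation that the lemma encapsulates is missing.
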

\begin{remark}
We should point out that the $O$-constant is independent of $\iota$ and $\varsigma$, namely, it's uniform for $\varsigma$, because only the behavior of $f_{h,\varsigma}''(x)$ is used when handling sums of booth Type I and Type II, after using Heath-Brown's identity (see \cite{HB83}). This is a critical property as we will see in our situation we actually need to bound a mean estimate of the form
$$
\sum_{d\sim D}\sum_{l\sim L}|b_l|\sum_{h\sim H}\frac 1h\max_{N\leq X}\left|\sum_{n\sim N}\Lambda(n)e\left(h(n+\iota)^{\gamma}+\frac{aln}{qd}\right)\right|.
$$
We are showing the level of distribution is $\theta_3=\frac{9\gamma-8}{12}-\rho$.
\end{remark}

In this section we aim to prove the following lemma:
\begin{lemma}\label{error£º2}
For $c\in \left(1, \frac{755}{662}\right)$, $\theta_3=\frac{1+9(\gamma-1)}{12}-\rho,$ we have
\begin{equation}\label{E:3}
\sum_{d\leq X^{\theta_3}}\frac{\xi}d\max_{N\leq X}\sum_{l=1}^{d Y}\left|\sum_{\substack{n\leq N\\n\in \CP}}\Lambda(n)e\left(\frac{anl}{dq}\right)\right|\ll
\xi\pi_{c}(X^{\gamma})X^{-\eta},
\end{equation}
where $\gamma=\frac1c$ and $\CP:=\left\{\floor{n^c}: n\in \BN\right\}$.
\end{lemma}
\begin{proof}
It is clearly that $p=\floor{n^c}$ if and only if there exists a nonnegative $\nu<1$ such that $n^c=p+\nu$, which, by a direct check, is equivalent to
$$
\floor{-p^\gamma}-\floor{-(p+1)^\gamma}=1,
$$
where $\gamma$ is taken to be the inverse of $c$ traditionally.

Hence we can take $\phi(n):=\floor{-n^\gamma}-\floor{-(n+1)^\gamma}$ to be a characteristic function of $\CP$, and thus for any $N\leq X$, we have
\begin{align*}
\sum_{l=1}^{d Y}\left|\sum_{\substack{n\leq N\\n\in \CP}}\Lambda(n)e\left(\frac{anl}{dq}\right)\right|&=\sum_{l=1}^{d Y}\left|\sum_{n\leq N}\phi(n)\Lambda(n)e\left(\frac{anl}{dq}\right)\right|\leq\CE_1(N,d)+\CE_2(N,d),
\end{align*}
where
\begin{equation}\label{E:1}
\CE_1(N,d):=\sum_{l=1}^{d Y}\left|\sum_{n\leq N}\left((n+1)^\gamma-n^\gamma\right)\Lambda(n)e\left(\frac{anl}{dq}\right)\right|,
\end{equation}
and
\begin{align*}
\CE_2(N,d):=\sum_{l=1}^{d Y}\left|\sum_{n\leq N}\left(\{-n^\gamma\}-\{-(n+1)^\gamma\}\right)\Lambda(n)e\left(\frac{anl}{dq}\right)\right|.
\end{align*}
We will see later that $\CE_1(N,d)$ and $\CE_2(N,d)$ are different types of exponential sums, and the former is algebraic, while the latter is analytic. Hence we use different methods to handle them respectively.
\begin{description}
\item[Estimate of $\CE_1(N,d)$] Write $\CE_1(N,d)$ in an integral form and integral by parts we have
\begin{align*}
\CE_1(N,d)&:=\sum_{l=1}^{d Y}\left|\int_{1}^{ N}\left((t+1)^\gamma-t^\gamma\right)d\sum_{n\leq t}\Lambda(n)e\left(\frac{anl}{dq}\right)\right|\\
&=\int_{1}^{N}\left((t+1)^\gamma-t^\gamma\right)d\left(\sum_{l=1}^{d Y}c_l\sum_{n\leq t}\Lambda(n)e\left(\frac{anl}{dq}\right)\right)\\
&\leq\int_{1}^{N}\max_{T\leq N}\left|\sum_{l=1}^{d Y}c_l\sum_{n\leq t}\Lambda(n)e\left(\frac{anl}{dq}\right)\right|\left((t+1)^{\gamma-1}-t^{\gamma-1}+O(\frac1N)\right)dt\\
&\ll\max_{T\leq N}\sum_{l=1}^{d Y}\left|\sum_{n\leq T}\Lambda(n)e\left(\frac{anl}{dq}\right)\right|,
\end{align*}
where $c_l=e^{i\theta_l}$, here $\theta_l$ is the principle argument of the inner sum in (\ref{E:1}).
Thus by Lemma \ref{p:lem6} we have $YX^{\frac23+\theta_3}\ll\pi_{c}(X^{\gamma})X^{-\eta},$ deducing that
$$
\theta_3\leq\gamma-\frac23-\rho-\varepsilon.
$$
\item[Estimate of $\CE_2(N,d)$] Take $\eta=3\varepsilon$. By Lemma \ref{p:lem1-new} we have
\begin{align*}
\CE_2(N,d)&=\sum_{l=1}^{d Y}c_l\sum_{n\leq N}\left(\sum_{1\leq|h|\leq X^{1-\gamma+\sigma+\varepsilon}}\frac{e(h(n+1)^\gamma-e(hn^\gamma))}{2\pi ih}\right)\Lambda(n)e\left(\frac{anl}{dq}\right)\\
&\qquad+O\left(X^{\gamma-1-\sigma}\sum_{l=1}^{d Y}\sum_{n\leq N}\Lambda(n)\right)\\
&=\sum_{l=1}^{d Y}c_l\sum_{1\leq|h|\leq X^{1-\gamma+\sigma+\varepsilon}}\sum_{n\leq N}\Lambda(n)\frac{\CE_2^{1}(N,d)-\CE_2^{0}(N,d)}{2\pi ih}+O\left(dYX^{\gamma-\sigma+\eta}\right).
\end{align*}
where
$$
\CE_2^{\iota}(N,d):=\sum_{l=1}^{d Y}c_l\sum_{1\leq|h|\leq X^{2+\varepsilon}}\frac1h\sum_{n\leq N}\Lambda(n)e\left(h(n+\iota)^\gamma+\frac{anl}{dq}\right),
$$
for $\iota\in\{0,1\}$. We split the summation range into  dyadic segments, a typical one is
\begin{align*}
\CE_{2_j}^{\iota}(N,d)&:=\sum_{l=1}^{d Y}c_l\sum_{h\sim H}\frac1h\sum_{n\leq N}\Lambda(n)e\left(h(n+\iota)^\gamma+\frac{anl}{dq}\right)\\
&\ll\sum_{l=1}^{d Y}\frac1H\sum_{h\sim H}\left|\sum_{n\leq N}\Lambda(n)e\left(h(n+\iota)^\gamma+\frac{anl}{dq}\right)\right|,
\end{align*}
where $H$ is of the form $2^jX^{1-\gamma}$, and $j\ll\log X$ since $H\leq X^{1-\gamma+\sigma+\varepsilon}$.
Hence by Lemma \ref{w} we have
\begin{align*}
\sum_{d\leq X^{\theta_3}}\frac{\xi}d\max_{N\leq X}\left|\CE_2(N,d)\right|&\ll
\sum_{\iota\in\{0,1\}}\sum_{j\ll\log X}\sum_{d\leq X^{\theta_3}}\frac{\xi}d\max_{N\leq X}\left|\CE_{2_j}^{\iota}(N,d)\right|\\
&\ll \sum_{d\leq X^{\theta_3}}\xi YX^{\gamma-\sigma-2\eta}\ll \xi^2\pi_c(X^\gamma)X^{\theta_3-\sigma-\eta}.
\end{align*}
So it suffices to take $\theta_3\leq\frac{1+9(\gamma-1)}{12}-\rho$.
\end{description}
Combining the above discussion we thus obtain (\ref{E:3}).
\end{proof}
\section{Proof of Theorem 2}
Denote by
\begin{align*}
\hat{\CB}&:=\left\{\round{\frac{b+pa}q}:
p\leqslant X, p\in \CP,\left\|\frac{b+pa}q\right\|<\frac\xi2
\right\}, \\
\end{align*}
where $\CP:=\left\{\floor{n^c}: n\in \BN\right\}$.
By taking $M\asymp\frac{dX^{\eta}}{\xi}$ in Lemma \ref{5} we have for and $d\in\BN$
\begin{align*}
\#\CB_d&=\sum_{\substack{p\leqslant X, p\in \CP\\
d\mid\round{\frac{ap+b}q}\\
\left\|\frac{ap+b}q\right\|<\frac\xi2}}1
=\sum_{\substack{p\leqslant X, p\in \CP\\
\left\|\frac{ap+b}{dq}\right\|<\frac\xi{2d}}}1
=\sum_{p\leqslant X, p\in \CP}\chi\left(\frac{ap+b}{dq}\right)
\displaybreak[0]\\
&=\frac{\pi_c(X^\gamma)\xi}d+E(\CB_d)+O\left(\frac{{\xi\pi_c(X^\gamma)}X^{-\eta}}d\right),\\
\end{align*}
where
\begin{align*}
\sum_{p\leqslant X, p\in \CP}\sum_{1\leq|l|\leq M}
a_le\left(\frac{(ap+b)l}{qd}\right)\leq E(\CB_d)\leq\sum_{p\leqslant X, p\in \CP}\sum_{1\leq|l|\leq M}
b_le\left(\frac{(ap+b)l}{qd}\right)\\
\end{align*}
with $
|a_l|+|b_l|\ll\frac\xi d,  \forall 1\leq|l|\leq M.
$

As shown in Lemma \ref{p:lem4-new}, by partial summation we have\begin{align*}
E(\CB_d)&\ll\max_{N\leqslant X}\frac1 {\log X}\sum_{1\leq|l|\leq M}\left(|a_l|+|b_l|\right)\left|\sum_{n\leqslant N, p\in \CP}\Lambda(n)e\left(\frac{anl}{qd}\right)
\right|\\
&\ll\max_{N\leqslant X}\frac\xi d\sum_{l=1}^{dY}\left|
\sum_{n\leqslant N, p\in \CP}\Lambda(n)e\left(\frac{anl}{qd}\right)
\right|,\\
\end{align*}
so the density function of sequence $\CB$ is $g_3(d)=\frac1d$, and the corresponding level of distribution $\theta_3$ can be taken to be $\frac{1+9(\gamma-1)}{12}-\rho$.

Since the level here is quite small, there might be little room for other sieve techniques. Thus we choose to use Laborde's results to deal with $\CB$ directly.
\begin{lemma}
There are infinitely many $P_{r}$ in $\CB$ if $$\frac{c}{\theta_3}\leq r-0.144.$$
\end{lemma}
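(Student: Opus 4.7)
The plan is to invoke Laborde's weighted sieve \cite{Lab79} of dimension one directly for the sequence $\CB$. All the necessary inputs have already been assembled: $\CB$ carries the density $g_3(d)=1/d$, which satisfies $\sum_{p<z}g_3(p)\log p = \log z + O(1)$ and hence has sieve dimension $\kappa=1$; the level of distribution is $X^{\theta_3}$ with $\theta_3=(1+9(\gamma-1))/12-\rho$, in the averaged form $\sum_{d\leq X^{\theta_3}}|E(\CB_d)|\ll \xi\pi_c(X^\gamma)X^{-\eta}$ coming from the preceding lemma; and the main term count is $|\CB|\sim\xi\pi_c(X^\gamma)$.

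First I would check that these quantities meet the hypotheses of Laborde's weighted sieve for a positive lower bound on the number of $P_r$-elements of $\CB$. Mertens' estimate gives $V(z)=\prod_{p<z}(1-g_3(p))\sim e^{-\gamma}/\log z$, and the sum of remainders $|E(\CB_d)|$ over $d\leq X^{\theta_3}$ is controlled by the exponential-sum estimate proved just above. These are exactly the data Laborde's weighted sieve consumes, the sieve dimension being one so that the lower-bound function $f_1$ of the earlier section is the relevant one.

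I would then apply Laborde's theorem. In this setup the key sieve quantity is the ratio between $\log(\max\CB)$ and $\log(\text{level of distribution})$, and since elements of $\CB$ are of order $X=(X^\gamma)^c$ while the level is $X^{\theta_3}$, this ratio is naturally read off as $c/\theta_3$ after rescaling the PS normalization from the index variable $n\leq X^\gamma$ to the element $\lfloor n^c\rfloor$ that is actually being sifted. Laborde's refinement of the linear weighted sieve then delivers a lower bound of order $|\CB|/\log X \gg \xi\pi_c(X^\gamma)/\log X$ for the count of $P_r$-numbers in $\CB$, provided $c/\theta_3 \leq r - 0.144$, where the constant $0.144$ is the Laborde improvement for $\kappa=1$ over the classical Jurkat--Richert linear weighted sieve. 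Since this lower bound tends to infinity with $X$, there are infinitely many $P_r$ in $\CB$, which is the stated conclusion.

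The main obstacle will be the careful bookkeeping of the Piatetski-Shapiro normalization inside Laborde's weighted sieve: one must verify that the size parameter in the sieve is taken in the right scale (the element size $\asymp X$ rather than the PS index range $X^\gamma$), so that the critical ratio becomes $c/\theta_3$ with the factor $c$ inherited from $X = (X^\gamma)^c$. Once the numerical value $0.144$ coming from the optimization in Laborde's dimension-one iteration is pinned down and the remainder estimate of the previous section is slotted in as the level of distribution, the lemma follows as a direct application of Laborde's theorem.
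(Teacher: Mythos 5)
Your approach — invoke Laborde's weighted sieve for $\CB$ as a dimension-one problem with level $X^{\theta_3}$ and read off the criterion from the ratio $c/\theta_3$ — is the same one the paper takes. The gap is in the constant: you treat $0.144$ as what Laborde's optimization directly gives for $\kappa=1$, but Theorem~3 of \cite{Lab79} as published states the constant $0.145$. A straight citation therefore yields only the condition $c/\theta_3 \leq r - 0.145$, not the lemma as stated. The actual content of the paper's proof is a numerical refinement: the upper bound for Laborde's quantity $\Lambda$ is recomputed via
$$
\frac{\log 6 - B - D}{6B} - \frac{\log\left(1+e^{-78B}\right)}{6B} \approx 0.144002,
$$
showing that $0.144$ is admissible in place of $0.145$. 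Without this recomputation your argument proves only the weaker version, which the paper notes forces $\rho = 1/181$ rather than $\rho = 1/180$ in Theorem~\ref{p:main2}. The remaining bookkeeping in your proposal — the density $g_3(d)=1/d$ of dimension one, the level of distribution supplied by the preceding exponential-sum lemma, the rescaling from the index range $n\le X^\gamma$ to element size $\floor{n^c}\asymp X$ that produces the factor $c$ — is correct and consistent with the paper's setup.
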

\begin{proof}
This is essentially Theorem 3 of \cite{Lab79}. However, the upper bound for $\Lambda$ there can actually be taken to be 0.144, since
$$
\frac{\log6-B-D}{6B}-\frac{\log(1+e^{-78B})}{6B}\approx0.144002.
$$
So we can take $0.144$ rather than $0.145$ in the statement of Laborde's theorem. This leads us to take $\rho=\frac1{180}$, otherwise, we can only take $\rho=\frac1{181}.$
\end{proof}
Take $r=13$ and Theorem \ref{p:main2} follows immediately.
\begin{remark}
Similarly, we can also use 2-dimensional sieve to sharp the range of $\rho$.
\end{remark}

\end{document}